\newcommand{\bR}{\mathbb{R}}
\newcommand{\p}{\partial}
\newcommand{\epsi}{\varepsilon}
\newcommand{\ubar}[1]{\underaccent{\bar}{#1}}
\newcommand\restr[2]{{
  \left.\kern-\nulldelimiterspace 
  #1 
  \littletaller 
  \right|_{#2} 
  }}
\newcommand{\mgn}{\mu_{\Gamma}^-}
\newcommand{\av}{A[v]}
\newcommand{\bl}{\lambda^*}
\newcommand{\hrn}{\bR^n_+}
\theoremstyle{plain}
\newtheorem{theorem}{Theorem}[section]
\newtheorem{lemma}{Lemma}[section]
\newtheorem{corollary}{Corollary}[section]
\newtheorem{proposition}{Proposition}[section]
\theoremstyle{definition}
\newtheorem{question}{Question}[section]
\theoremstyle{remark}
\newtheorem{remark}{Remark}[section]
\newtheorem{example}{Example}[section]
\def\thmhead@plain#1#2#3{%
  \thmname{#1}\thmnumber{\@ifnotempty{#1}{ }\@upn{#2}}%
  \thmnote{ {\the\thm@notefont#3}}}
\let\thmhead\thmhead@plain
\begin{document}

\title[Fully nonlinear Yamabe problem on manifolds with boundary]{On the fully nonlinear Yamabe problem with constant boundary mean curvature. I
}

\author[B.Z. Chu]{BaoZhi Chu}
\address[B.Z. Chu]{Department of Mathematics, Rutgers University, 110 Frelinghuysen Road, Piscataway, NJ 08854-8019, USA}
\email{bc698@math.rutgers.edu}

\author[Y.Y. Li]{YanYan Li}
\address[Y.Y. Li]{Department of Mathematics, Rutgers University, 110 Frelinghuysen Road, Piscataway, NJ 08854-8019, USA}
\email{yyli@math.rutgers.edu}

\author[Z. Li]{Zongyuan Li}
\address[Z. Li]{Department of Mathematics, City University of Hong Kong, 83 Tat Chee Avenue, Kowloon Tong, Hong Kong SAR}
\email{zongyuan.li@cityu.edu.hk}

\begin{abstract} 
In a recent paper, we established optimal Liouville-type theorems for conformally invariant second-order elliptic equations in the Euclidean space. In this work, we prove an optimal Liouville-type theorem for these equations in the half-Euclidean space.
\end{abstract}
\maketitle
	
\vspace{.25in}

\section{Introduction}
The Yamabe problem was solved through the works of Yamabe, Trudinger, Aubin, and Schoen: On a compact smooth connected Riemannian manifold without boundary, there exist constant scalar curvature conformal metrics. Its fully nonlinear version has been studied extensively for over twenty years; see \cite{CGY-annals, CGY-janalmath, Ge-Wang, guan-wang-06,gursky-viaclovsky-indiana,gursky-viaclovsky-annals, Li-Li_CPAM03, LiLi_acta, Li-Nguyen-14, Li-Nguyen-greensfunc, sheng-trudinger-wang, viaclovsky-duke00,viaclovsky-02} and the references therein. 

In \cite{CLL-1}, we have broadened the scope of the fully nonlinear Yamabe problem by
establishing optimal Liouville-type theorems, local gradient estimates, and new existence and compactness results, allowing the scalar curvature of the conformal metrics to have varying signs. 
In this paper, we establish an optimal Liouville-type theorem associated with the fully nonlinear Yamabe problem with constant boundary mean curvature.

On an $n$-dimensional Riemannian manifold $(M^n,g)$ with boundary $\p M$, $n\geq 3$,
consider the Schouten tensor
\begin{equation*}
    A_g=\frac{1}{n-2}\big( Ric_g-\frac{R_g}{2(n-1)}g\big),
\end{equation*}
where $Ric_g$ and $R_g$ denote, respectively, the Ricci tensor and the scalar curvature.
We denote by $\lambda(A_g)=(\lambda_1(A_g),\dots,\lambda_n(A_g))$ the eigenvalues of $A_g$ with respect to $g$. 
We use $h_g$ to represent the mean curvature on $\p M$ with respect to the unit inner normal $\vec{n}_g$ on $\p M$ (the boundary of a Euclidean ball has positive mean curvature).

Let, as in \cite{CLL-1}, 
 \begin{equation} \label{eqn-230331-0110}
   \begin{cases}
    \Gamma \subsetneqq \bR^n \,\, \text{be a non-empty open symmetric cone\footnotemark with vertex at the origin},\\
    \Gamma+\Gamma_n\subset\Gamma,
    \end{cases}
\end{equation}\footnotetext{By symmetric set, we mean that $\Gamma$ is invariant under interchange of any two $\lambda_i$.}where 
$\Gamma_n:=\{\lambda\in \bR^n\ |\ \lambda_i>0,~\forall 1\leq i\leq n\}$. In condition \eqref{eqn-230331-0110}, neither $\Gamma\subset\Gamma_1$ nor the convexity of $\Gamma$ is assumed, where $\Gamma_1\coloneqq\{ \lambda\in \bR^n \mid \sum_{i=1}^n \lambda_i>0\}$.
On the other hand, it is easy to see that \eqref{eqn-230331-0110} implies $\Gamma_n\subset \Gamma\subset \bR^n\setminus (-\overline{\Gamma_n})$. 

\begin{question}\label{quest-1.1-schouten-meancurvature}
Let $(M^n, g)$ be a compact smooth Riemannian manifold with boundary, $n\geq 3$, and let $\Gamma$ satisfy \eqref{eqn-230331-0110}. Assume that $\lambda(A_g)\in\Gamma$ on $M^n$. 
Given a constant $c$,
for which symmetric function\footnote{By symmetric function, we mean that $f$ is invariant under permutation of $\lambda_i$'s.} $f$ defined on $\Gamma$, does there exist a function $v$ on $M^n$ such that the conformal metric $g_v\coloneqq e^{2v}g$ satisfies
\begin{equation*}
         {f}(\lambda(A_{g_v}))=1\quad \text{on}~M^n,
\end{equation*}
and the boundary mean curvature $h_{g_v}$ satisfies 
\begin{equation*}
         h_{g_v}=c\quad \text{on}~\p M\text{?}
\end{equation*}
\end{question}

For $(f,\Gamma)=(\sigma_1,\Gamma_1)$, the above question is the boundary Yamabe problem.
When $\Gamma\subset\Gamma_1$ and $f$ is a concave function in $\Gamma$, Question \ref{quest-1.1-schouten-meancurvature} was proposed in \cite{Li-Li_JEMS}; see \cite{sophie-cvpde, chen-wei-100, He-Sheng, Jiang-Trudinger-oblique-1,Jiang-Trudiunger-2021, Jin-07, Jin-Li-Li, Li-Li_JEMS, Li2009, Li-Nguyen-umblic, Li-Nguyen-counterexample} and the references therein for studies on this problem.

On the other hand, when $\Gamma$ is not contained in $\Gamma_1$, allowing the scalar curvature $R_{g_v}$ to have varying signs, there has been no study of Question \ref{quest-1.1-schouten-meancurvature} in the literature.

\smallskip

In this paper, as a first step towards answering Question \ref{quest-1.1-schouten-meancurvature}, we establish a Liouville-type theorem on half Euclidean space
as mentioned earlier.

For a conformal metric $g_v\coloneqq e^{2v} g$, it is known that 
\begin{equation*}
A_{g_v}= -\nabla^2 v +  d v\otimes d v -\frac{1}{2} |\nabla v|^2 g + A_g,\quad \text{and}\quad h_{g_v}=e^{-v}(h_g-\frac{\p v}{\p \vec{n}_g}),
\end{equation*}
where all covariant derivatives and norms on the above are with respect to $g$. In particular, when $\bar{g} = |dx|^2$ is the Euclidean metric on $\overline\hrn$, 
\begin{equation*}
A_{\bar{g}_{v} } =e^{2 v} \av_{ij} dx^i dx^j,\quad \text{and}\quad h_{\bar{g}_v}=-e^{-v}\frac{\p v}{\p x_n},
\end{equation*}
where we denote
$\hrn\coloneqq \{(x_1,\dots,x_n)\in\bR^n\mid x_n>0\}$, and
$\av$ is the M\"obius Hessian of $v$ defined as below.

\subsection{Main results}
In dimensions $n\geq 2$, let $v$ be a $C^2$ function on $\bR^n$. The M\"obius Hessian of $v$ is defined as the following $n\times n$ symmetric matrix: 
\begin{equation*} 
    \av = e^{-2v} \left( - \nabla^2 v + \nabla v \otimes \nabla v - \frac{1}{2} |\nabla v|^2 I \right),
\end{equation*}
where $I$ is the $n\times n$ identity matrix. 
It is known that $\av$ has the following M\"obius invariance:
\begin{equation*}
    \lambda(A[v^\varphi])=\lambda(\av)\circ \varphi,\quad v^{\varphi} := v \circ \varphi + \frac{1}{n}\log |J_\varphi|,
\end{equation*}
where
$\varphi:\bR^n\cup\{\infty\}\to\bR^n\cup\{\infty\}$ is a M\"obius transformation, i.e. a finite composition of translations, dilations and inversions, $J_\varphi$ is the Jacobian of $\varphi$, and $\lambda(M)$ denotes the eigenvalues of the symmetric matrix $M$ modulo permutations.

Let $(f,\Gamma)$ satisfy the following conditions:
\begin{equation} \label{eqn-240223-0305}
\begin{cases}
f \in C^{0,1}_{loc}(\Gamma)\,\,\text{is a symmetric function and satisfies} ~~
    \frac{\p f}{\p \lambda_i} \geq c(K)>0,~\forall i\\
    \text{a.e. on compact subset} ~K~\text{of}~\Gamma,
\end{cases}
\end{equation}
\begin{equation}\label{natural-assumption}
    0\notin\overline{f^{-1}(1)}.
\end{equation}
Clearly, a symmetric function $f\in C^1(\Gamma)$ with $\p_{\lambda_i} f>0$ for any $i$ satisfies \eqref{eqn-240223-0305}. If $f\in C^0(\overline\Gamma)$ and $f|_{\p\Gamma}=0$, then condition \eqref{natural-assumption} is satisfied.

\medskip

For constant $c\in\bR$, consider the equation
\begin{equation}\label{halfspace-equ-v-critical}
    \begin{cases}
        f(\lambda(\av))=1\quad \text{in}~\overline\hrn,\\
        \frac{\p v}{\p x_n}=c\cdot e^v\quad \text{on}~\p \hrn.
    \end{cases}
\end{equation}
Denote, as in \cite{CLL-1},
\begin{equation*}
    \bl\coloneqq (1,-1,\dots,-1).
\end{equation*}
We have the following Liouville-type theorem.

\begin{theorem}\label{nondegenerateliouville-critical}
    For $n\geq 2$ and $c\in\bR$, let $(f,\Gamma)$ satisfy \eqref{eqn-230331-0110}--\eqref{natural-assumption} and $\bl\notin \overline\Gamma$, and $v\in C^2(\overline\hrn)$ satisfy \eqref{halfspace-equ-v-critical}. Then \begin{equation}\label{half-space-bubble}
        v(x)\equiv \log \left(\frac{a}{1+b|x-\bar{x}|^2}\right),
    \end{equation}
    where $a,b>0$ and $\bar x=(\bar x', \bar x_n)\in\bR^n$ satisfy $f(2a^{-2}b\bm{e})=1$ and $2 a^{-1}b \bar x_n=c$ with $\bm{e}=(1,1,\dots,1)$.
\end{theorem}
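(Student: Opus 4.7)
The plan is to carry out the method of moving spheres on $\overline\hrn$ with spheres centered at boundary points, adapting the Euclidean argument of \cite{CLL-1} to the half-space setting. For $x_0\in\p\hrn$ and $\lambda>0$, set
\begin{equation*}
\varphi_{x_0,\lambda}(x) := x_0 + \lambda^2\,\frac{x-x_0}{|x-x_0|^2},\qquad v_{x_0,\lambda}(x) := v(\varphi_{x_0,\lambda}(x)) + \tfrac{1}{n}\log|J_{\varphi_{x_0,\lambda}}(x)|.
\end{equation*}
Since $\varphi_{x_0,\lambda}$ preserves both $\hrn$ and $\p\hrn$, the Möbius invariance of $\av$ yields $f(\lambda(A[v_{x_0,\lambda}]))=1$ in $\hrn\setminus\{x_0\}$, and a direct calculation shows that the oblique condition $\p_{x_n}w = c\,e^{w}$ on $\p\hrn$ is preserved by $v\mapsto v_{x_0,\lambda}$ (this reflects the conformal invariance of $h_{\bar g_v}$ under Möbius maps of $\overline\hrn$ fixing $\p\hrn$). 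Thus both parts of \eqref{halfspace-equ-v-critical} are respected by the reflection.

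I would first prove the standard starting lemma: for each $x_0\in\p\hrn$ there exists $\lambda_0(x_0)>0$ so that $v_{x_0,\lambda}\le v$ on $\overline\hrn\setminus\overline{B_\lambda(x_0)}$ for all $\lambda\in(0,\lambda_0(x_0))$, via a local $C^2$ expansion near $x_0$ together with $v\in C^2(\overline\hrn)$. Setting
\begin{equation*}
\bar\lambda(x_0) := \sup\bigl\{\mu>0 : v_{x_0,\lambda}\le v\text{ on }\overline\hrn\setminus\overline{B_\lambda(x_0)},\ \forall\lambda\in(0,\mu)\bigr\},
\end{equation*}
I then split into two cases. If $\bar\lambda(x_0)<\infty$ for some $x_0$, writing $f(\lambda(A[v]))-f(\lambda(A[v_{x_0,\bar\lambda}]))$ as a linear operator acting on $v - v_{x_0,\bar\lambda}$ (uniformly elliptic on compacta by \eqref{eqn-240223-0305}) and combining the interior strong maximum principle with a Hopf-type lemma for the oblique boundary condition $\p_{x_n}v = c\,e^v$ forces $v\equiv v_{x_0,\bar\lambda(x_0)}$ in $\overline\hrn\setminus\{x_0\}$. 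If instead $\bar\lambda(x_0)=\infty$ for every $x_0\in\p\hrn$, an asymptotic analysis at infinity (comparing $v$ with its reflections as $\lambda\to\infty$ for a range of base points $x_0$) forces $\lambda(\av)$ to approach the ray through $\bl$ at infinity, contradicting the hypothesis $\bl\notin\overline\Gamma$ together with \eqref{natural-assumption}.

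Hence $\bar\lambda(x_0)<\infty$ for every $x_0\in\p\hrn$, so $v$ enjoys a boundary-indexed family of Möbius symmetries $v\equiv v_{x_0,\bar\lambda(x_0)}$. A Li--Zhang-type calculus lemma (the half-space analogue of the one used in \cite{CLL-1}, which characterizes $C^0$ functions satisfying such a one-parameter relation at every point of $\p\hrn$) then forces $v$ to take the form \eqref{half-space-bubble}. Direct computation gives $\lambda(\av)\equiv 2a^{-2}b\,\bm{e}$ and $\p_{x_n}v\bigl|_{x_n=0}/e^{v|_{x_n=0}} = 2a^{-1}b\,\bar x_n$, so the interior equation yields $f(2a^{-2}b\bm{e})=1$ and the boundary condition yields $2a^{-1}b\,\bar x_n=c$.

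The main obstacle I anticipate is the rigorous execution of the strong maximum principle / Hopf lemma step under the weak structural hypotheses \eqref{eqn-230331-0110}--\eqref{natural-assumption}: the cone $\Gamma$ need not be convex and $f$ is only locally Lipschitz, so the linearization and the uniform ellipticity estimates must be organized on the compact subsets swept out by $\lambda(\av)$ along the moving spheres; moreover, the oblique boundary condition $\p_{x_n}v=ce^v$ for general $c\in\bR$ demands a Hopf-type argument that works simultaneously for positive, negative and zero values of $c$. The assumption $\bl\notin\overline\Gamma$ is decisive on two fronts: it excludes the degenerate ``cylindrical'' one-variable solutions that would otherwise compete with the bubble form \eqref{half-space-bubble}, and it generates the contradiction in the case $\bar\lambda(x_0)\equiv\infty$.
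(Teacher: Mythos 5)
Your overall strategy (moving spheres centered on $\p\hrn$, dichotomy on $\bar\lambda$, calculus lemma at the end) is the same as the paper's, but the proposal glosses over precisely the two steps that are the genuinely hard, new content here, and as written those steps do not go through. First, the starting lemma does \emph{not} follow from a local $C^2$ expansion near $x_0$ alone: the expansion only yields $v_{x_0,\lambda}<v$ in an annulus $\lambda<|y-x_0|<r_0$, and to handle $|y-x_0|\geq r_0$ one needs the decay bound $\liminf_{|y|\to\infty}\bigl(v(y)+2\log|y|\bigr)>-\infty$. Equivalently, after inversion, one must show that a function with $\lambda(\av)\in\overline\Gamma$ in $B^+$ and $\p_{x_n}v\leq c\,e^v$ on $\p'B^+\setminus\{0\}$ cannot have $\liminf_{x\to 0}v(x)=-\infty$. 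This is Proposition \ref{ms_starter}, whose proof requires constructing the comparison functions $w_\delta$ and uses $\bl\notin\overline\Gamma$ in an essential way (and the statement is false when $\bl\in\overline\Gamma$, cf.\ Example \ref{reex1}); your proposal supplies no substitute for it. Second, in the case $\bar\lambda(x_0)<\infty$, the strong maximum principle and Hopf lemma only give the alternative ``$v\equiv v_{x_0,\bar\lambda}$ or $v>v_{x_0,\bar\lambda}$ with strict normal derivative on $\p B_{\bar\lambda}$''; to contradict the maximality of $\bar\lambda$ in the second alternative you must also control $v-v_{x_0,\bar\lambda}$ near the singular point of the reflection, i.e.\ prove $\liminf_{|y|\to\infty}(v-v_{x_0,\bar\lambda})(y)>0$. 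This is Proposition \ref{notouching-thm-av-critical}, a nontrivial ``no-touching at an isolated boundary singularity'' statement whose proof needs Lemma \ref{240204-1953} and a careful perturbation $v_\epsi=v+\epsi(|x|+x_n)$ compatible with the oblique boundary condition. Your proposal omits this entirely, and without it the conclusion $v\equiv v_{x_0,\bar\lambda(x_0)}$ is not justified.

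Two further points. In the case $\bar\lambda\equiv\infty$, the correct route is that $v$ depends only on $x_n$ (by the Li--Zhang calculus lemma), after which an ODE analysis using $\bl\notin\overline\Gamma$ forces $v'>0$, $v'\to 0$, and then \eqref{natural-assumption} forces $-v''e^{-2v}\geq\delta_0$, which is incompatible with $v'>0$; the mechanism is not that $\lambda(\av)$ approaches the ray through $\bl$. Finally, the concluding step cannot be done by a calculus lemma alone: the symmetries $v\equiv v^{x,\bar\lambda(x)}$ are only available for $x$ in the $(n-1)$-dimensional set $\p\hrn$, which determines $v|_{\p\hrn}$ (Li--Zhu) and, after a Möbius map to a ball, only the \emph{radial symmetry} of the transplanted solution about an interior point --- any radial function has those symmetries. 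One must then invoke the equation again to classify radial solutions of $f(\lambda(\av))=1$ in a ball (Lemma \ref{diskradial}) to pin down the bubble.
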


Note that for $v$ defined in \eqref{half-space-bubble}, $\av\equiv 2a^{-2}bI$ and $\frac{\p v}{\p x_n}e^{-v}=2a^{-1}b\bar{x}_n$ on $\p\hrn$.

\begin{remark}\label{nondegoptimalremark}
    The condition $\bl\notin\overline\Gamma$ in Theorem \ref{nondegenerateliouville-critical} is optimal: Whenever $\Gamma$ satisfies \eqref{eqn-230331-0110} with $\bl\in\overline\Gamma$, there exists a smooth function $f$ satisfying \eqref{eqn-240223-0305} and \eqref{natural-assumption} such that for any $c\in \bR$, there exists a smooth solution $v$ of \eqref{halfspace-equ-v-critical} which is not of the form \eqref{half-space-bubble}. For details, see Section \ref{counterdetail}.
\end{remark}

The condition $\bl\notin\overline\Gamma$ is equivalent to stating that for some $\mu>1$, the inequality $\lambda_1+\mu\lambda_2>0$ holds for any $\lambda\in\Gamma$ satisfying $\lambda_1\geq \dots\geq \lambda_n$.

Condition \eqref{natural-assumption} in Theorem \ref{nondegenerateliouville-critical} cannot be simply removed when $c\neq 0$. For $n\geq 2$ and some constant $\mu>n-1$, let
\begin{equation*}
    f(\lambda)\coloneqq  \min_{1\leq i \leq n} \left(\lambda_i + \tfrac{\mu}{n-1}\textstyle\sum_{j\neq i}\lambda_j \right) + 1,~ \Gamma\coloneqq \{\lambda\in\bR^n\mid \min_{1\leq i\leq n}(\lambda_i+\tfrac{\mu+1}{2(n-1)}\textstyle\sum_{j\neq i}\lambda_j) >0\},
\end{equation*}
then $\bl\notin\overline\Gamma$, and $v(x)=2(\mu-1)^{-1}\log(1+2^{-1}(\mu-1)c  x_n)$ is a solution of \eqref{halfspace-equ-v-critical}.

When $c=0$, Theorem \ref{nondegenerateliouville-critical} still holds without condition \eqref{natural-assumption}: Extending $v$ to $\bR^n$ evenly in $x_n$, then $v\in C^2(\bR^n)$, $f(\lambda(\av))=1$ on $\bR^n$, and the desired conclusion follows from \cite[Theorem 1.1]{CLL-1}.

\smallskip

In dimensions $n\geq 3$, Theorem \ref{nondegenerateliouville-critical} in the case $\Gamma\subset\Gamma_1$ was previously known. When $(f,\Gamma)=(\sigma_1,\Gamma_1)$, it was proved by Li and Zhu \cite{Li-Zhu}; while under an additional hypothesis $\limsup_{|x|\to\infty}(v(x)+2\log|x|)<\infty$, it was an earlier result of Escobar \cite{escobar_cpam}. For general $(f,\Gamma)$ with $\Gamma\subset\Gamma_1$ and $f\in C^1$, it was proved by Li and Li \cite{Li-Li_JEMS}.

We have been informed recently by Duncan and Nguyen that they have proved in \cite{duncan-nguyen-email}  some  Liouville-type theorems for the fully nonlinear Loewner-Nirenberg problem in the half space.

Liouville-type theorems in $\bR^n$ for equation $f(\lambda(\av))=1$ corresponding to Theorem \ref{nondegenerateliouville-critical} have been established. The case when $n\geq 3$ and $(f,\Gamma)=(\sigma_1,\Gamma_1)$ was proved by Caffarelli, Gidas, and Spruck \cite{CGS}. The case when $n=4$ and $(f,\Gamma)=(\sigma_2,\Gamma_2)$ was proved by Chang, Gursky, and Yang \cite{CGY-janalmath}. The case when $n\geq 3$ and $(f,\Gamma)=(\sigma_k,\Gamma_k)$ for all $k$ was proved by Li and Li \cite{Li-Li_CPAM03}. Later in \cite{LiLi_acta}, they extended their result  to general $(f,\Gamma)$ with $\Gamma\subset\Gamma_1$. The case when $n=2$ was proved by Li, Lu, and Lu \cite{Li2021ALT}. Recently, we established in \cite{CLL-1} an optimal Liouville-type theorem for the equation in $\bR^n$.
For classification results under some additional assumptions on solutions, see
\cite{aubin-bubble, talenti, viaclovsky-duke00, GNN,obata, Chen-Li}.

In \cite{CLL-1}, we identified the optimal condition $\bl\notin\overline\Gamma$ for the validity of Liouville-type theorem in $\bR^n$ for equation $f(\lambda(\av))=1$.
For the validity of Liouville-type theorem for equation \eqref{halfspace-equ-v-critical} in the half Euclidean space,
 $\bl\notin\overline\Gamma$ is also a necessary and sufficient condition.
One crucial ingredient in the proof of Theorem \ref{nondegenerateliouville-critical} is to treat isolated boundary singularities. For instance, let $v$ satisfy \eqref{halfspace-equ-v-critical}. By the M\"obius invariance of \eqref{halfspace-equ-v-critical}, we have
\begin{equation*}
     \begin{cases}
        f(\lambda(A[v^{\varphi_{0,1}}]))=1\quad &\text{in}~\overline\hrn\setminus\{0\},\\
        \frac{\p }{\p x_n}v^{\varphi_{0,1}}=c\cdot e^{v^{\varphi_{0,1}}}\quad &\text{on}~\p \hrn\setminus\{0\},
    \end{cases}\end{equation*}
where $\varphi_{0,1}(x)=x/|x|^2$ is the inversion with respect to the unit sphere $\p B_1(0)$ in $\bR^n$. Assume that $\bl\notin\overline\Gamma$ and $v^{\varphi_{0,1}}>v$ in $B^+$, we prove that
\begin{equation*}
\liminf_{x\to 0}(v^{\varphi_{0,1}}-v)(x)>0.
\end{equation*}
See Proposition \ref{ms_starter} and \ref{notouching-thm-av-critical}, where more general results are obtained.

\subsection{Examples}
 In the following, we provide some examples of Theorem \ref{nondegenerateliouville-critical} by choosing appropriate $(f,\Gamma)$.  Some examples are expressed in terms of the Ricci tensor. They follow from Theorem \ref{nondegenerateliouville-critical} through a linear transformation, see Appendix \ref{riccisection} for details as well as equivalent reformulations of our theorems in terms of the Ricci tensor.
Further examples can be found by considering $(f,\Gamma)$ in \cite[\S 9]{CLL-1}.

For any symmetric subset $\Omega$ of $\bR^n$, a continuous function defined on $\{\lambda\in\Omega\mid\lambda_1\geq \dots\geq\lambda_n\}$ corresponds to a continuous symmetric function on $\Omega$. Therefore,
we will only specify the definition of a symmetric function in the region $\{\lambda_1\geq\dots\geq\lambda_n\}$. Recall that $\bar{g}=|dx|^2$ is the Euclidean metric on $\bR^n$ and $h_{\bar{g}_v}$ is the mean curvature with respect to the conformal metric $\bar{g}_v\coloneqq e^{2 v}\bar{g}$.

\begin{example}
    For $n\geq 3$, $2\leq i\leq n$, and $c\in\bR$,
    \begin{equation*}
        \lambda_i(Ric_{\bar{g}_v})=1,~v\in C^2~\text{on}~\overline{\hrn},~\text{and}~h_{\bar{g}_v}=c~\text{on}~\p\hrn\implies v~\text{is given by \eqref{half-space-bubble}},
    \end{equation*}
where  
 $a,b>0$ and $\bar x=(\bar x', \bar x_n)\in\bR^n$ satisfy $4(n-1)ba^{-2}=1$ and $2 a^{-1}b \bar x_n=-c$. However, this result fails when $i=1$ for any $c$, as shown by counterexamples given in Remark \ref{nondegoptimalremark}.
    
\end{example}

\begin{example}
 For $n\geq 3$, $1\leq i<j\leq n$, and $c\in\bR$,
\begin{equation*}
(\lambda_i+\dots+\lambda_j)(Ric_{\bar{g}_v})=1,~v\in C^2~\text{on}~\overline{\hrn},~\text{and}~h_{\bar{g}_v}=c~\text{on}~\p\hrn\implies v~\text{is given by \eqref{half-space-bubble}},
    \end{equation*}
where $a,b>0$ and $\bar x=(\bar x', \bar x_n)\in\bR^n$ satisfy $4(n-1)(j-i+1)ba^{-2}=1$ and $2 a^{-1}b \bar x_n=-c$.
\end{example}
    For $n\geq 3$ and $p=1,\dots,n-1$, let
\begin{equation*}
    G_p(\lambda)\coloneqq p \textstyle\sum\limits_{i\leq n-p}\lambda_i + (n-p)\sum\limits_{i>n-p}\lambda_i.
\end{equation*} 
On a locally conformally flat Riemannian manifold $(M^n,g)$, the quantity $G_p(\lambda(A_g))$, sometimes referred to as the $p$-Weitzenb\"ock curvatures, arises naturally from the 
Weitzenb\"ock formula for $p$-forms $\omega$:
\begin{equation*}
    \bigtriangleup \omega=\nabla^*\nabla \omega + G_p(\lambda(A_g)) \omega,
\end{equation*}
where $\bigtriangleup=dd^*+d^*d$ is the Hodge-de Rham Laplacian and $\nabla^*\nabla$ is the connection Laplacian; see \cite{MR2306044} and the references therein.
\begin{example}
       For $n\geq 3$, $1\leq p\leq n-2$, and $c\in\bR$,
    \begin{equation*}
        G_p(\lambda(A_{\bar{g}_v}))=1,~v\in C^2~\text{on}~\overline{\hrn},~\text{and}~h_{\bar{g}_v}=c~\text{on}~\p\hrn\implies v~\text{is given by \eqref{half-space-bubble}},
    \end{equation*}
where  
 $a,b>0$ and $\bar x=(\bar x', \bar x_n)\in\bR^n$ satisfy $2p(n-p)ba^{-2}=1$ and $2 a^{-1}b \bar x_n=-c$. However, this result fails when $p=n-1$ for any $c$, as shown by counterexamples given in Remark \ref{nondegoptimalremark}.\end{example}

\subsection{More general equations.}
For constants $c\in\bR$ and $p\geq 0$, consider the following more general equations, which include \eqref{halfspace-equ-v-critical} and its subcritical cases, 
\begin{equation}\label{halfspace-equ-v}
\begin{cases}
        f(\lambda(\av))=e^{-pv}\quad  \text{in}~\overline\hrn, \\
    \frac{\p v}{\p x_n}=c\cdot e^v\quad  \text{on}~\p \hrn.
    \end{cases}
\end{equation}
We have the following Liouville-type theorem.
\begin{theorem}\label{nondegenerateliouville}
    For $n\geq 2$, $p\geq 0$ and $c\in\bR$, let $(f,\Gamma)$ satisfy \eqref{eqn-230331-0110} and \eqref{eqn-240223-0305} with $\bl\notin \overline\Gamma$, and let $v\in C^2(\overline\hrn)$ satisfy \eqref{halfspace-equ-v}. Then one of the following holds:
\begin{enumerate}[label=(\roman*)]
    \item $p=0$ and v is of the form  \eqref{half-space-bubble},
    where $a,b>0$ and $\bar x=(\bar x', \bar x_n)\in\bR^n$ satisfy $f(2a^{-2}b\bm{e})=1$ and $2 a^{-1}b \bar x_n=c$ with $\bm{e}=(1,1,\dots,1)$.
     \item The solution $v$ depends only on $x_n$, i.e. $v(x',x_n)\equiv v(0',x_n)$.
      \end{enumerate}
\end{theorem}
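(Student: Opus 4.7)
The plan is to apply the method of moving spheres centered at boundary points of $\hrn$. For $x_0\in\p\hrn$ and $\lambda>0$, let $\varphi_{x_0,\lambda}(x)=x_0+\lambda^2(x-x_0)/|x-x_0|^2$ denote inversion about $\p B_\lambda(x_0)$ (which preserves $\overline{\hrn}$), and write $v_{x_0,\lambda}:=v^{\varphi_{x_0,\lambda}}$. Combining the M\"obius invariance $\lambda(A[v_{x_0,\lambda}])(x)=\lambda(\av)(\varphi_{x_0,\lambda}(x))$ with $|J_{\varphi_{x_0,\lambda}}(x)|=(\lambda/|x-x_0|)^{2n}$, the function $v_{x_0,\lambda}$ satisfies
\[
f\bigl(\lambda(A[v_{x_0,\lambda}])\bigr)=\bigl(\lambda/|x-x_0|\bigr)^{2p}\,e^{-p\,v_{x_0,\lambda}}\ \text{in}\ \overline{\hrn}\setminus\{x_0\},
\]
together with the same Neumann condition $\p_{x_n}v_{x_0,\lambda}=c\,e^{v_{x_0,\lambda}}$ on $\p\hrn\setminus\{x_0\}$. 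The prefactor $(\lambda/|x-x_0|)^{2p}\le 1$ outside $B_\lambda(x_0)$ is strict when $p>0$, and this asymmetry is what ultimately forces $p=0$ in the non-degenerate case.

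Define $\bar\lambda(x_0):=\sup\{\lambda>0\,:\,v_{x_0,\mu}\le v\ \text{in}\ \overline{\hrn}\setminus B_\mu(x_0)\ \text{for all}\ 0<\mu\le\lambda\}$, which is strictly positive for every $x_0\in\p\hrn$ by a direct Taylor expansion in $\lambda$ using $v\in C^2(\overline{\hrn})$ (cf.\ Proposition \ref{ms_starter}). I then split into two cases. If $\bar\lambda(x_0)=\infty$ for every $x_0\in\p\hrn$, then $v_{x_0,\lambda}\le v$ on $\overline{\hrn}\setminus B_\lambda(x_0)$ for all $\lambda>0$ and all $x_0\in\p\hrn$. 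The standard calculus lemma of \cite{Li-Zhu} (also used in \cite{LiLi_acta, Li2021ALT}) then forces $v(x',x_n)\equiv v(0',x_n)$, which is conclusion (ii).

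Otherwise $\bar\lambda:=\bar\lambda(x_0)<\infty$ at some $x_0\in\p\hrn$; by continuity, $v_{x_0,\bar\lambda}\le v$ on $\overline{\hrn}\setminus B_{\bar\lambda}(x_0)$. I would upgrade this to the identity $v_{x_0,\bar\lambda}\equiv v$ outside $B_{\bar\lambda}(x_0)$ by combining: the strong maximum principle for the fully nonlinear operator (exploiting the ellipticity $\p f/\p\lambda_i\ge c(K)>0$ from \eqref{eqn-240223-0305}) to rule out interior touching; a Hopf-type argument on $\p\hrn\setminus\{x_0\}$, using the Neumann condition, to rule out touching away from the pole; and Proposition \ref{notouching-thm-av-critical} to rule out touching at the singular pole $x_0$ itself. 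Substituting $v_{x_0,\bar\lambda}\equiv v$ into the two forms of the equation then gives $(\bar\lambda/|x-x_0|)^{2p}\equiv 1$ for $|x-x_0|>\bar\lambda$, which forces $p=0$. Once $p=0$, repeating the moving-spheres analysis at varying boundary centers and combining the resulting conformal symmetries, in the manner of \cite{Li-Li_JEMS, LiLi_acta}, yields the bubble form \eqref{half-space-bubble}, giving conclusion (i).

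The main obstacle is the no-touching at the singular pole $x_0$. Both the interior strong maximum principle and the classical Hopf lemma break down at this one boundary point, where $v_{x_0,\bar\lambda}$ has an essential singularity while $v$ is smooth, so the usual comparison tools do not directly apply. The hypothesis $\bl\notin\overline\Gamma$ is precisely the ingredient that allows this degenerate boundary touching to be ruled out, via Propositions \ref{ms_starter} and \ref{notouching-thm-av-critical}; with those two propositions in hand, the remainder of the argument is a standard half-space adaptation of the moving-spheres program for conformally invariant fully nonlinear equations.
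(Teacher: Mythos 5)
Your overall strategy is the same as the paper's: moving spheres centered on $\p\hrn$, the dichotomy on $\bar\lambda(x_0)$, the calculus lemma giving alternative (ii) when $\bar\lambda\equiv\infty$, and the observation that $v^{x_0,\bar\lambda}\equiv v$ forces $(\bar\lambda/|x-x_0|)^{2p}\equiv 1$ and hence $p=0$. That skeleton is correct.

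There is, however, one genuine gap: the no-touching step at the pole cannot be handled by Proposition \ref{notouching-thm-av-critical} as a black box when $p>0$, because its hypothesis $f(\lambda(A[u]))\geq f(\lambda(\av))$ in $B^+$ fails for the pair $u=v^{x_0,\bar\lambda}$, $v=v$. From the Kelvin transform one gets $f(\lambda(A[v^{x_0,\bar\lambda}]))=(\bar\lambda/|y-x_0|)^{2p}e^{-pv^{x_0,\bar\lambda}}$, while $f(\lambda(\av))=e^{-pv}$; inside $B^+_{\bar\lambda}(x_0)$ one has $v^{x_0,\bar\lambda}\geq v$, so $e^{-pv^{x_0,\bar\lambda}}\leq e^{-pv}$ and the two competing factors $(\bar\lambda/|y-x_0|)^{2p}\geq 1$ and $e^{-p(v^{x_0,\bar\lambda}-v)}\leq 1$ do not yield the needed pointwise comparison of $f(\lambda(A[u]))$ with $f(\lambda(\av))$. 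What \emph{is} available is the weighted inequality $e^{pu}f(\lambda(A[u]))\geq 1= e^{pv}f(\lambda(\av))$, and this is exactly the hypothesis of the subcritical variant proved in the paper (Proposition \ref{notouching-thm-av-subcritical}). Its proof is a routine modification of that of Proposition \ref{notouching-thm-av-critical} — one replaces the comparison operator by $F(s,p,M)=e^{ps}f(\lambda(e^{-2s}(-M+p\otimes p-2^{-1}|p|^2I)))$ and checks that the extra zeroth-order term contributes only $O(\epsi|x_\epsi|)$, which is absorbed by the gain $b\min\{\epsi/|x_\epsi|,\delta/5\}$ — but it does have to be carried out; your argument as written invokes a proposition whose hypotheses are not satisfied. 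A second, smaller omission: to reach alternative (i) you need $\bar\lambda(x)<\infty$ for \emph{every} boundary point, not just one; this follows from the standard dichotomy (finiteness of $\bar\lambda(x_0)$ at one point forces $\lim_{|y|\to\infty}(v(y)+2\log|y|)<\infty$, which forces finiteness everywhere), which you should state rather than leave implicit in the citation of the symmetry literature.
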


Note that the above theorem does not assume condition \eqref{natural-assumption}.

When either $c\leq 0$ or $\bm{e_n}\in\p\Gamma$ holds, Alternative (ii) does not occur. See Remark \ref{supple-remark}. Here and throughout the paper, we denote $\bm{e_n}\coloneqq (0,\dots,0,1)$.
In particular, Alternative (ii) does not occur when $(f,\Gamma)=(\sigma_k,\Gamma_k)$ for $2\leq k\leq n$, since $\bm{e_n}\in \p\Gamma_k$ for $2\leq k\leq n$. Here, 
$\sigma_k(\lambda)\coloneqq \textstyle\sum_{1\leq i_1< \cdots < i_k\leq n} {\lambda_{i_1}\cdots \lambda_{i_k} }$ and $\Gamma_k \coloneqq \{\lambda\in\bR^n \mid \sigma_{l}(\lambda)>0,~\forall 1\leq l \leq k\}$.

On the other hand, when both $\bm{e_n}\notin\p\Gamma$, i.e. $\mgn<\infty$, and $c>0$ hold, Alternative (ii) does occur in some cases. Recall that, as in \cite{CLL-1}, we define
\begin{equation*}
      \mgn\coloneqq \inf \{c\mid  (c,-1,\dots,-1)\in \overline{\Gamma}\}\in [0,\infty]. 
\end{equation*}
Here we follow the convention: $\inf \emptyset = \infty$. See \cite[\S 2.2]{CLL-1} for properties of $\mgn$.
We further assume the following condition on $(f,\Gamma)$:
\begin{equation}\label{nice-ode-condition}
    f~\text{is homogeneous of degree}~1~\text{and}~\overline{f^{-1}(1)}\cap \p\Gamma=\emptyset.
\end{equation}
Then Alternative (ii) never occurs if and only if $p\in[0,\mgn+1]$.
See Proposition \ref{affine-ode-blowup} and Remark \ref{remark-optimal-range}.

\subsection{Notations}
For a set $\Omega\subset\bR^n$, we denote $\Omega^+\coloneqq \Omega\cap\hrn$, $\p'\Omega\coloneqq$ the interior of the set $\p \Omega^+\cap \p \bR^n_+$, and $\p''\Omega\coloneqq \p\Omega^+\setminus\p'\Omega$.

\subsection{Organization of the paper} In \S \ref{proof-1.1}, we prove Theorem \ref{nondegenerateliouville-critical}. 
In \S \ref{proof-1.2}, we prove 
Theorem \ref{nondegenerateliouville}.
In \S \ref{ode-sec}, we discuss solutions depending only on one variable $x_n$.

\section{Proof of Theorem \ref{nondegenerateliouville-critical}}\label{proof-1.1}
The proof of Theorem \ref{nondegenerateliouville-critical} uses the by-now-standard method of moving spheres, a variant of method of moving planes. The crucial new ingredients of the proof are the following two propositions.
\begin{proposition}\label{ms_starter} 
For $n\geq 2$ and $c\in\bR$, let $\Gamma$ satisfy \eqref{eqn-230331-0110} with $\bl\notin \overline{\Gamma}$, and $v\in C^2(\overline{B^+}\setminus\{0\})$ satisfy
\begin{equation}\label{lowerconicalequ}
    \begin{cases}
        \lambda(\av)\in \overline{\Gamma}~ \text{in}~B^+,\\
        \frac{\p v}{\p x_n}\leq c\cdot e^v~\text{on}~\p' B^+\setminus\{0\}.
    \end{cases}
\end{equation}  
Then $\liminf\limits_{x\to 0}$ $v(x)\in(-\infty,+\infty]$.
\end{proposition}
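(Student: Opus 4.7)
The plan is to argue by contradiction. Suppose $\liminf_{x \to 0} v(x) = -\infty$, and choose $x_k \to 0$ in $\overline{B^+}\setminus\{0\}$ with $v(x_k) \to -\infty$. The strategy is to compare $v$ with a barrier whose M\"obius Hessian has eigenvalues outside $\overline\Gamma$, applying a maximum-principle argument on half-annular subdomains. For $\alpha > 2$ and small $\beta \geq 0$, consider the candidate barrier $\tilde w(x) := -\alpha \log|x| + \beta x_n$. Direct computation gives
\begin{equation*}
A[\tilde w](x) = \tfrac{\alpha(\alpha - 2)}{2}\,|x|^{2\alpha - 2}(2 e_r \otimes e_r - I) + O(\beta),\qquad e_r = x/|x|,
\end{equation*}
so its leading-order eigenvalues are a positive multiple of $\bl$. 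Since $\bl \notin \overline\Gamma$ and $\overline\Gamma$ is closed, $\lambda(A[\tilde w]) \notin \overline\Gamma$ uniformly on compact subsets of $\overline\hrn \setminus \{0\}$ for $\beta$ sufficiently small.

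Set $\phi := v + \alpha\log|x| - \beta x_n - K$ on the half-annulus $\Omega_k := B_r^+ \setminus \overline{B^+_{\rho_k}}$ with $\rho_k = |x_k|/2$ and $r > 0$ small, choosing $K$ so that $\phi \geq 0$ on $\partial''B_r^+$. Since $\phi(x_k) \to -\infty$, $\phi$ attains a negative minimum at some $y_k \in \overline{\Omega_k}\setminus \partial'' B_r^+$. If $y_k$ is interior, the first- and second-order minimum conditions $\nabla v = \nabla \tilde w$ and $\nabla^2 v \geq \nabla^2 \tilde w$ at $y_k$ yield, upon substitution into the definition of $A[v]$,
\begin{equation*}
\lambda(A[v])(y_k) \leq t\,\bl + O(\beta) \text{ componentwise}, \qquad t := e^{-2v(y_k)}\tfrac{\alpha(\alpha-2)}{2|y_k|^2} > 0,
\end{equation*}
which together with $\lambda(A[v])(y_k) \in \overline\Gamma$ and the cone property $\overline\Gamma + \overline{\Gamma_n} \subset \overline\Gamma$ forces $t\,\bl + O(\beta) \in \overline\Gamma$---contradicting $\bl \notin \overline\Gamma$ once $\beta$ is small. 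If $y_k$ lies on the flat boundary $\partial'\Omega_k$, then $\partial_n \phi(y_k) \geq 0$ gives $\partial_n v(y_k) \geq \beta$; combined with the hypothesis $\partial_n v \leq c\, e^v$ this yields $\beta \leq c\, e^{v(y_k)}$. For $c \leq 0$ this contradicts $\beta > 0$ directly; for $c > 0$, the derived bound $v(y_k) \geq \log(\beta/c)$ together with $\phi(y_k) \leq \phi(x_k) \to -\infty$ and $|y_k| \geq |x_k|/2$ forces a uniform lower bound on $v(x_k)$---contradicting $v(x_k) \to -\infty$.

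The remaining case $y_k \in \partial''B^+_{\rho_k}$ is the principal obstacle and is handled by a blow-down rescaling $\tilde v_k(y) := v(\rho_k y) + \log \rho_k$, which preserves both the interior PDE and the boundary constant $c$. After the further normalization $\hat v_k := \tilde v_k - \tilde v_k(y_k/\rho_k)$, one obtains a sequence of admissible subsolutions on expanding half-space domains, with rescaled boundary constant $\hat c_k = c\,\rho_k\, e^{v(y_k)} \to 0$. Under suitable uniform estimates, a subsequential limit $v_\infty$ should exist on $\overline\hrn\setminus\{0\}$, satisfying $\lambda(A[v_\infty]) \in \overline\Gamma$ together with $\partial_n v_\infty \leq 0$ on $\partial\hrn$; an even reflection across $\partial\hrn$ then produces an admissible function on $\bR^n\setminus\{0\}$ to which the interior isolated-singularity analysis from \cite{CLL-1} applies, yielding the desired contradiction. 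Establishing the uniform estimates needed to extract a meaningful blow-down limit from the one-sided admissibility $\lambda(A[\tilde v_k]) \in \overline\Gamma$, and justifying the even extension despite having only a one-sided Neumann inequality in the limit, are the main technical difficulties.
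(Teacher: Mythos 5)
Your interior and flat-boundary cases are sound in spirit (for the interior case the clean formulation is via the matrix decomposition $W[\tilde w](y_k)=W[v](y_k)+P$ with $P\geq 0$ and the property $\overline\Gamma+\overline{\Gamma_n}\subset\overline\Gamma$, rather than a componentwise eigenvalue inequality), but the third case --- the minimum landing on the inner sphere $\p'' B^+_{\rho_k}$ --- is a genuine gap, and it is not a removable technicality of your scheme: it is its generic outcome. Since your barrier contains $+\alpha\log|x|$ with $\alpha>2$, the function $\phi=v+\alpha\log|x|-\beta x_n-K$ tends to $-\infty$ as $x\to 0$ \emph{even when $v$ is bounded}, so the very negative minimum near the inner boundary carries no information about $v$ being unbounded below, and no contradiction can be extracted there. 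Your proposed repair by blow-down cannot close this: the rescaled functions satisfy only the one-sided differential inclusion $\lambda(A[\hat v_k])\in\overline\Gamma$ together with a one-sided Neumann inequality, which provides no compactness or a priori estimates whatsoever, so the subsequential limit $v_\infty$ need not exist; and the normalization $\hat v_k=\tilde v_k-\tilde v_k(y_k/\rho_k)$ erases the only hypothesis ($v(x_k)\to-\infty$) that could drive a contradiction. You acknowledge these points as "technical difficulties," but they are the entire content of the proposition.

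The paper avoids this case altogether by first applying the Kelvin transform $u:=v^{0,1}$, which moves the singularity to infinity and converts the claim into $\liminf_{|x|\to\infty}(u(x)+2\log|x|)>-\infty$ on $\hrn\setminus B^+$. The comparison function is $w_\delta(x)=\frac{2}{\mu-1}\log\bigl(\epsi(|x-\bm{e_n}|^{1-\mu}-\delta)\bigr)$ with $1<\mu<\mgn$, centered at the interior point $\bm{e_n}$, whose M\"obius Hessian has eigenvalues proportional to $(\mu,-1,\dots,-1)\notin\overline\Gamma$ on the annulus $B_R(\bm{e_n})\setminus B_3(\bm{e_n})$, $R=\delta^{-1/(\mu-1)}$. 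The two boundary components that would correspond to your problematic case are harmless there: on the outer sphere $\p B_R(\bm{e_n})$ the barrier itself tends to $-\infty$, so no touching occurs, and the inner sphere $\p''B_3(\bm{e_n})$ is a \emph{fixed} compact set on which $u$ is continuous, so $w_\delta\leq u$ there is arranged simply by taking $\epsi$ small. Only the interior and flat-boundary alternatives remain, both of which you already know how to handle; sending $\delta\to 0$ then yields $u(x)\geq \frac{2}{\mu-1}\log\epsi-2\log|x-\bm{e_n}|$ and the conclusion. If you wish to salvage a direct argument near $0$ without the Kelvin transform, you must build the analogous degeneration into your barrier at the inner boundary (the role played by the $-\delta$ term in $w_\delta$); as written, your proof does not establish the proposition.
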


\begin{remark}\label{ms_starter-optimal-remark}
The assumption $\bl\notin\overline\Gamma$ in the above proposition is optimal in the sense that the conclusion fails for any cone $\Gamma$ with $\bl\in\overline{\Gamma}$ and any $c\in\bR$, as shown by Example \ref{reex1} at the end of this section.
\end{remark}
\begin{proposition}\label{notouching-thm-av-critical}
    For $n\geq 2$ and $c\in\bR$, let $(f,\Gamma)$ satisfy \eqref{eqn-230331-0110} and \eqref{eqn-240223-0305} with $\bl\notin\overline\Gamma$.
    Suppose that $u\in C^2(\overline{B^+}\setminus\{0\})$ and $v\in C^2(\overline{B^+})$ satisfy $\lambda(\av)(\overline{B^+})\subset\Gamma$,
\begin{equation}\label{notounchingproblem-av-critical}
    \begin{cases}
         f(\lambda(A[u])) \geq f(\lambda(\av))~~ \text{in}~{B^+},\\
        \frac{\p u}{\p x_n}\leq c \cdot e^u~~\text{on}~\p' B^+\setminus\{0\},~~
        \frac{\p v}{\p x_n}\geq c \cdot e^v~~  \text{on}~\p' B^+,
        \end{cases}
\end{equation}
and $u>v$ in $B^+$.
Then $\liminf\limits_{x\to 0} (u-v)(x)>0$.
\end{proposition}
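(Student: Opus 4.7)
My plan is to argue by contradiction. Assume $\liminf_{x \to 0}(u-v)(x) = 0$; the liminf is nonnegative because $u > v$ in $B^+$, so this is the only alternative to the desired conclusion.

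\textbf{Step 1 (Boundedness of $u$ near the origin).} I would first apply Proposition \ref{ms_starter} to $u$. The inequality $f(\lambda(A[u])) \geq f(\lambda(A[v]))$ combined with the monotonicity in \eqref{eqn-240223-0305} and $\lambda(A[v])(\overline{B^+}) \subset \Gamma$ forces $\lambda(A[u]) \in \overline\Gamma$ in $B^+$, and the boundary condition $\p_n u \leq c\,e^u$ on $\p' B^+\setminus\{0\}$ is assumed. Proposition \ref{ms_starter} then yields $\liminf_{x\to 0} u(x) > -\infty$; since $v \in C^2(\overline{B^+})$ is bounded near the origin, $w := u - v$ is bounded near $0$.

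\textbf{Step 2 (Linearization of the difference).} On any compact $K \Subset \overline{B^+}\setminus\{0\}$, the plan is to linearize along the segment $A_t = (1-t)A[v] + tA[u]$, $t\in[0,1]$. Using the Lipschitz bound and the positive lower bound $\p_i f \geq c(K) > 0$ from \eqref{eqn-240223-0305}, this gives a linear uniformly elliptic second-order inequality $\cL w \leq 0$ for $w$ in $K\cap B^+$, with coefficients controlled in terms of the distance of $\lambda(A[v])$ to $\p\Gamma$ (which is positive on $\overline{B^+}$). The boundary inequality similarly linearizes to $\p_n w \leq C(x)\,w$ on $K\cap\p' B^+$, with $C(x)$ locally bounded. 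The task reduces to ruling out $\liminf_{x\to 0} w = 0$ for a positive $w$ solving such an oblique-boundary linear problem on $\overline{B^+}\setminus\{0\}$.

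\textbf{Step 3 (Barrier and contradiction).} To reach a contradiction I plan to construct, for each sufficiently small $r > 0$, a positive sub-barrier $\psi_r \in C^2(\overline{B^+_r}\setminus\{0\})$ satisfying $\cL \psi_r \geq 0$ and $\p_n \psi_r \geq C(x)\,\psi_r$ on $\p' B^+_r\setminus\{0\}$, with $\liminf_{x\to 0}\psi_r(x) > 0$ yet $\sup_{\p''B^+_r}\psi_r$ small. Since $w > 0$ is continuous and $\geq 2\eta$ on $\p''B^+_r$ for some $\eta > 0$, rescaling $\psi_r$ to lie below $w$ on $\p''B^+_r$ and applying the linear maximum principle on the annular region $B^+_r \setminus B^+_\delta$ (then letting $\delta \to 0$) pins $w \geq \psi_r$ on $B^+_r$, yielding $\liminf_{x\to 0} w \geq \liminf_{x\to 0}\psi_r > 0$ and the contradiction. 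The existence of $\psi_r$ with the above eigenvalue-sign pattern (roughly, negative in the radial direction and positive in tangential directions) uses the spectral gap $\bl\notin\overline\Gamma$, equivalent to $\lambda_1 + \mu \lambda_2 > 0$ on ordered $\lambda\in\Gamma$ for some $\mu > 1$, which is what makes the linearized operator $\cL$ scale-invariantly elliptic on cone-like barriers.

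\textbf{Main obstacle.} The decisive difficulty is constructing the sub-barrier $\psi_r$ compatible simultaneously with the oblique boundary inequality on $\p'B^+_r$ and with the singular behavior at the corner $0 \in \overline{\p' B^+_r} \cap \overline{\p''B^+_r}$. This is precisely where $\bl\notin\overline\Gamma$ enters: the gap $\mu > 1$ both bounds the ellipticity constants of $\cL$ from below uniformly in scale and produces enough room in $\Gamma$ for a radial-type logarithmic perturbation centered at the origin to remain admissible after being shifted by $A[v]$. The oblique boundary condition adds a further technical layer, requiring the barrier to have sufficient control of its normal derivative at $\p' B^+$; this part parallels the corresponding construction in Proposition \ref{ms_starter}, but now for a linearized comparison problem rather than for $v$ itself.
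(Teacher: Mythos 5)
Your plan correctly identifies the decisive difficulty (the corner at the origin and the role of $\bl\notin\overline\Gamma$), but the proposed route has genuine gaps. The central one is in Step 2: you linearize $f(\lambda(A[u]))-f(\lambda(\av))$ along the segment joining $(v,\nabla v,\nabla^2v)$ to $(u,\nabla u,\nabla^2u)$. But $\Gamma$ is not assumed convex, and $f$ is only locally Lipschitz with ellipticity constants $c(K)$ depending on compact subsets $K$ of $\Gamma$; near the puncture $\lambda(A[u])$ may leave every such compact set and the interpolated Hessians need not have eigenvalues in $\Gamma$ at all, so your operator $\cL$ is neither well defined nor uniformly elliptic with bounded coefficients up to $x=0$ --- exactly where you need it. The paper never compares $u$ with $v$ directly: it compares $u$ with $\widetilde{v_\epsi}=v+\epsi(|x|+x_n)-\lambda(\epsi)$, whose Hessian differs from $\nabla^2v$ by a positive semidefinite rank-$(n-1)$ matrix of size $\epsi/|x|$; the structure condition $\Gamma+\Gamma_n\subset\Gamma$ then keeps the interpolation segment in a fixed compact subset of $\Gamma$ determined by $v$ alone, and the mean value theorem produces the strict gain $b\min\{\epsi/|x_\epsi|,\delta/5\}$ at the touching point. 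Relatedly, your claim that $\bl\notin\overline\Gamma$ ``bounds the ellipticity constants of $\cL$ from below'' is a misattribution: ellipticity comes from \eqref{eqn-240223-0305}, while $\bl\notin\overline\Gamma$ enters only through the logarithmic cusp barrier, via $\lambda(W[\alpha\log|x|])\approx\alpha|x|^{-2}\bl\notin\overline\Gamma$, in Proposition \ref{ms_starter} and Lemma \ref{240204-1953}.

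Step 3 also does not close logically. Under the contradiction hypothesis $\liminf_{x\to0}(u-v)=0$, the conclusion $w\geq\psi_r$ with $\liminf_{x\to0}\psi_r>0$ is false, so it cannot be the output of a maximum principle; concretely, the comparison on $B_r^+\setminus B_\delta^+$ breaks down at the inner boundary $\p''B_\delta^+$, where you have no lower bound for $w$ relative to $\psi_r$ (indeed $\inf_{\p'' B_\delta^+}w\to0$ along a sequence of $\delta$). A correct argument must instead locate a first touching point of $u$ with a perturbed competitor away from the corner and derive a contradiction there. To produce such a point, the paper first establishes Lemma \ref{240204-1953}, the one-sided expansion $v(0)\geq\inf_{\p''B_{r_j}^+}\bigl(v-\widetilde{\delta}x_n-\tfrac{\tau}{2}|x|^2\bigr)$ along a sequence $r_j\to0$, proved with barriers $\alpha\log|x|+\widetilde{\delta}x_n+\tfrac{\tau}{2}|x|^2$ that tend to $-\infty$ at $0$ (so the inner boundary is handled automatically) followed by $\alpha\to0$. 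This quantitative ingredient, together with the preliminary M\"obius normalization $\nabla_Tv(0)=0$ of Lemma \ref{vansihgrad}, is absent from your plan, and without it the corner $0\in\overline{\p'B^+}\cap\overline{\p''B^+}$ remains untreated.
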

Denote, as in \cite{CLL-1}, 
$v^{x,\lambda} := v^{\varphi_{x,\lambda}}$, where $\varphi_{x,\lambda} (y) := y + \lambda^2 (y-x)/|y-x|^2$ is the Kelvin transformation with respect to sphere $\p B_\lambda(x)$ in $\bR^n$.
\begin{proof}[Proof of Proposition \ref{ms_starter}]
    Let $u\coloneqq v^{0,1}$, then 
    \begin{equation}\label{240131-1631}
            \begin{cases}
        \lambda(A[u])\in \overline{\Gamma}~\text{in}~\hrn\setminus B^+,\\
        \frac{\p u}{\p x_n}\leq c\cdot e^u < (|c|+1)\cdot e^u~\text{on}~\p\hrn\setminus \p' B^+,
    \end{cases}
    \end{equation}
    and the desired conclusion is equivalent to $\liminf_{|x|\to 0}(u(x)+2\log|x|)>-\infty$.
    
    Since $\bl\notin\overline\Gamma$, we have $\mgn>1$. Let $1<\mu< \mgn$ and $\delta>0$  which will be sent to $0$. Consider 
    \begin{equation*}
        w_\delta(x)\coloneqq \frac{2}{\mu-1} \log \left( \epsi ( |x-\bm{e_n}|^{1-\mu}-\delta )\right),
    \end{equation*}
    where $\bm{e_n}=(0,\dots,0,1)$ and $\epsi>0$ is a sufficiently small constant to be determined later. 
    
    A computation as in \cite{CLL-1} gives
    \begin{equation*}
        \lambda(A[w_\delta])=C(r)(\mu,-1,\dots,-1),
    \end{equation*}
    where $r=|x-\bm{e_n}|$, $C(r)=2^{-1} e^{-2 w_\delta} w_{\delta}^{'}(w_\delta+2\log r)'>0$, and the primes denote derivatives on the variable $r$. It follows that 
    \begin{equation}\label{equ-w_delta}
    \lambda(A[{w_\delta}])\in \bR^n\setminus \overline\Gamma\quad \text{in}~ B_R(\bm{e_n})\setminus B_3(\bm{e_n}),
    \end{equation}
 where $R=\delta^{-{1}/{(\mu-1)}}$. 
    
    Next we select a small constant $\epsi>0$, which is independent of $\delta$, such that 
    \begin{equation}\label{epsigoal}
        \begin{cases}
            \frac{\p w_\delta}{\p x_n}\geq (|c|+1)\cdot e^{w_\delta}~ \text{on}~\p' B_R(\bm{e_n})\setminus \p' B_3(\bm{e_n}), \\
            w_\delta\leq u~\text{on}~\p'' B_3(\bm{e_n}).
        \end{cases}
    \end{equation}
    A direct computation gives
    \begin{equation*}
        \frac{\p w_\delta}{\p x_n}\cdot e^{-w_\delta}= 
        2 \epsi^{-\frac{2}{\mu-1}} (1-\delta r^{\mu-1})^{-\frac{\mu +1}{\mu -1}}
        ~\text{on}~\p' B_R(\bm{e_n})\setminus \p' B_3(\bm{e_n}),    \end{equation*}
    where $r=|x-\bm{e_n}|\in [3,R]$. Since the right hand side above is increasing in $r$, and $\delta$ is closed to $0$, we have
    \begin{equation*}
        \frac{\p w_\delta}{\p x_n}\cdot e^{-w_\delta}\geq 
        2 \epsi^{-\frac{2}{\mu-1}} (1- 3^{\mu-1}\delta )^{-\frac{\mu +1}{\mu -1}}     \geq \epsi^{-\frac{2}{\mu-1}}~\text{on}~\p' B_R(\bm{e_n})\setminus \p' B_3(\bm{e_n}).
    \end{equation*}
    On the other hand, $w_\delta \leq \frac{2}{\mu-1}\log \epsi$ on $\p''B_{3}(\bm{e_n})$. Combining the two inequalities, we can choose a desired $\epsi$ satisfying \eqref{epsigoal}.
    
    We next prove that 
    \begin{equation}\label{mp-argument}
    u\geq w_\delta\quad \text{in}~~\Omega_R\coloneqq B^+_R(\bm{e_n})\setminus \overline{B^+_3(\bm{e_n})}.
    \end{equation}

    Suppose the contrary, then, using \eqref{epsigoal} and $w_\delta(x)\to -\infty$ as $x\to \p B_R(\bm{e_n})$, we have   $a\coloneqq (w_\delta-u)(x_0)=\sup_{\Omega_R} (w_\delta-u)>0$ for some $x_0\in \Omega_R\cup \p'\Omega_R$. Let $\widetilde{w_\delta}\coloneqq w_\delta-a$, then
    $\widetilde{w_\delta}\leq u$ in $\overline{\Omega_R}$ and $\widetilde{w_\delta}=u$ at $x_0$.
    
    If 
    $x_0\in\Omega_R$, we have $\nabla^2 u(x_0)\geq \nabla^2 \widetilde{w_\delta}(x_0)$ and $\nabla u(x_0)=\nabla \widetilde{w_\delta}(x_0)$. By \eqref{eqn-230331-0110} and \eqref{240131-1631}, we have $\lambda(A[\widetilde{w_\delta}])(x_0)\in\overline\Gamma$. On the other hand, by \eqref{eqn-230331-0110} and \eqref{equ-w_delta}, we have 
$\lambda(A[\widetilde{w_\delta}])(x_0)\in \bR^n\setminus{\overline\Gamma}$. A contradiction. 

We are left to rule out the case $x_0\in\p'\Omega_R$. By the derivative tests, we have $\p_{x_n}u(x_0)\geq \p_{x_n}\widetilde{w_\delta}(x_0)$. By \eqref{epsigoal}, $\p_{x_n}\widetilde{w_\delta}(x_0)\geq (|c|+1)e^{w_\delta(x_0)}=(|c|+1)e^{u(x_0)+a}> (|c|+1)e^{u(x_0)}$. Hence, $\p_{x_n}u(x_0)>(|c|+1)e^{u(x_0)}$. A contradiction to \eqref{240131-1631}. Therefore, \eqref{mp-argument} is proved.

Sending $\delta$ to $0$ in \eqref{mp-argument}, we obtain
    \begin{equation*}
        u(x)\geq \frac{2}{\mu-1}\log \epsi -2\log|x-\bm{e_n}|,~x\in \hrn\setminus B_{3}^+(\bm{e_n}).
    \end{equation*}
    It follows that $\liminf\limits_{|x|\to\infty}(u(x)+2\log|x|)\geq \frac{2}{\mu-1}\log \epsi >-\infty$.  Proposition \ref{ms_starter} is proved.
\end{proof}

The proof of Proposition \ref{notouching-thm-av-critical} makes use of the following lemma.

\begin{lemma}\label{240204-1953}   
For $n\geq 2$ and $c\in\bR$, let $\Gamma$ satisfy \eqref{eqn-230331-0110} with $\bl\notin \overline{\Gamma}$,
$\delta>0$ be a constant,
and $v\in C^2(\overline{B^+}\setminus\{0\})$ satisfy \eqref{lowerconicalequ}. Then there exist a constant $\tau\geq 0$, depending only on $\widetilde{\delta}\coloneqq c\cdot e^{v(0)}+\delta$, and a sequence of numbers $r_j\to 0+$ such that $v(0)\coloneqq \liminf_{x\to 0} v(x)$ satisfies 
    \begin{equation}\label{240204-1955}
        v(0)
        \geq \inf\limits_{x\in \p'' B_{r_j}^+(0)} \left( v(x) - \widetilde{\delta} x_n -\frac{\tau}{2}|x|^2 \right),~\forall j.
    \end{equation}
    Moreover, when $c\leq 0$, inequality \eqref{240204-1955} holds for all $r>0$ sufficiently small, not only for a sequence $\{r_j\}$. 
\end{lemma}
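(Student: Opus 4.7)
My plan is to compare $v$ with a carefully chosen quadratic barrier and apply a cone-based touching analysis powered by $\bl \notin \overline{\Gamma}$. I take
\[
g(x) \;:=\; v(0) + \widetilde{\delta}\,x_n + \tfrac{\tau}{2}|x|^2,
\]
and select $\tau = \tau(\widetilde{\delta}) \geq 0$ so that $\lambda(A[g])$ lies strictly outside $\overline{\Gamma}$ on a punctured half-ball near $0$. A direct computation yields
\[
\lambda(A[g])(x) \;=\; e^{-2g(x)}\Bigl( \tfrac{|w|^2}{2}\, \bl \;-\; \tau\, \bm{e}\Bigr), \qquad w := \widetilde{\delta}\, e_n + \tau\, x;
\]
taking $\tau = 0$ when $\widetilde{\delta} \neq 0$ and a small fixed $\tau > 0$ otherwise (so that $\tau$ depends only on $\widetilde{\delta}$), the inclusion $\overline{\Gamma} + \overline{\Gamma_n} \subset \overline{\Gamma}$ combined with $\bl \notin \overline{\Gamma}$ gives $\lambda(A[g])(x) \notin \overline{\Gamma}$ for all $x$ in a suitable punctured half-ball around $0$. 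Moreover $\partial_{x_n} g = \widetilde{\delta} = c\,e^{v(0)} + \delta$, and the $\liminf$ hypothesis together with the continuity of $e^v$ yields $\partial_{x_n} g > c\,e^v \geq \partial_{x_n} v$ on $\partial' B^+_r \setminus \{0\}$ for every small $r > 0$.

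I argue by contradiction. If no sequence $\{r_j\}$ as in the statement exists, there is $r_0 > 0$ such that $\inf_{\partial'' B^+_r}(v - \widetilde{\delta}\,x_n - (\tau/2)|x|^2) > v(0)$ for every $r \in (0, r_0)$; since $\bigcup_{0 < r < r_0} \partial'' B^+_r = \overline{B^+_{r_0}} \setminus \{0\}$, this is equivalent to $v > g$ throughout $\overline{B^+_{r_0}} \setminus \{0\}$, while the definition of $v(0)$ forces $\liminf_{x \to 0}(v - g)(x) = 0$. The core touching analysis on $v - g$ then proceeds as follows. At an interior infimum point $x^* \in B^+_{r_0}$ the relations $\nabla v(x^*) = \nabla g(x^*)$ and $\nabla^2 v(x^*) \geq \nabla^2 g(x^*)$ yield the matrix inequality $A[v](x^*) \leq e^{2(g-v)(x^*)}\, A[g](x^*)$; sorted-eigenvalue comparison combined with $\overline{\Gamma} + \overline{\Gamma_n} \subset \overline{\Gamma}$ and $\lambda(A[g]) \notin \overline{\Gamma}$ then contradicts $\lambda(A[v]) \in \overline{\Gamma}$. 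At a lateral infimum point $x^* \in \partial' B^+_{r_0} \setminus \{0\}$, the Neumann-type minimum condition $\partial_{x_n}(v - g)(x^*) \geq 0$ contradicts the pointwise inequality $\partial_{x_n} g > \partial_{x_n} v$ established above. The spherical case $x^* \in \partial'' B^+_{r_0}$ is forbidden by the standing assumption $v > g$ there.

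The main obstacle will be the degenerate case in which the infimum $\inf_{\overline{B^+_{r_0}} \setminus \{0\}}(v - g) = 0$ is not attained but is only approached in the limit $x \to 0$ (i.e.\ $v > g$ strictly throughout the punctured half-ball). I intend to defeat this by introducing a sliding shifted barrier $g_\epsilon := g + \epsilon$ with $0 < \epsilon < \inf_{\partial'' B^+_r}(v - g)$ at a fixed $r \in (0, r_0)$: then $v > g_\epsilon$ on $\partial''B^+_r$ while $\liminf_{x \to 0}(v - g_\epsilon) = -\epsilon < 0$, so the set $\{v < g_\epsilon\}$ is non-empty near $0$ and separated from $\partial''B^+_r$. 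Running the cone touching analysis of the previous paragraph on $v - g_\epsilon$ at any interior or lateral minimum of strictly negative value yields a contradiction, and the remaining possibility --- that no such minimum is attained, so $v - g_\epsilon$ decreases monotonically toward $-\epsilon$ as $x \to 0$ without critical points --- is incompatible with $\lambda(A[v]) \in \overline{\Gamma}$ and $\lambda(A[g]) \notin \overline{\Gamma}$ via a refinement of the same touching argument applied on shrinking balls around a sequence realizing the $\liminf$. For the ``moreover'' statement with $c \leq 0$: integrating the boundary inequality gives $v(x',x_n) \leq v(x',0)$, making the lateral-boundary exclusion uniform in $r$ and delivering \eqref{240204-1955} for every sufficiently small $r > 0$ rather than only along a subsequence.
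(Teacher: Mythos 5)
Your barrier $g=v(0)+\widetilde{\delta}x_n+\tfrac{\tau}{2}|x|^2$ and the two touching cases (interior via $W[v]\leq W[g]$ plus $\overline{\Gamma}+\overline{\Gamma_n}\subset\overline{\Gamma}$, lateral via the Neumann sign condition) are sound, and your eigenvalue computation for $A[g]$ is correct. But the proof has a genuine gap exactly where you flag "the remaining possibility." Under your contradiction hypothesis one always has $v>g$ on the punctured half-ball with $\liminf_{x\to 0}(v-g)=0$, so after shifting to $g_\epsi=g+\epsi$ the negative infimum of $v-g_\epsi$ may be equal to $-\epsi$ and approached \emph{only} as $x\to 0$, with no minimum point in $\overline{B^+_r}\setminus\{0\}$ at all (the one-dimensional model $v-g=x_n$ shows this is not a degenerate curiosity). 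Your proposed fix --- "a refinement of the same touching argument applied on shrinking balls around a sequence realizing the $\liminf$" --- is not an argument: on a small ball around such a point the infimum of $v-g_\epsi$ is again typically attained only on that ball's boundary, which yields no derivative information. The paper's proof closes precisely this hole by augmenting the barrier with the term $\alpha\log|x|$, which tends to $-\infty$ at the puncture (so any touching point is forced into $B^+_{r_j}\cup\p'B^+_{r_j}\setminus\{0\}$) while the augmented barrier stays outside $\overline{\Gamma}$ because $\lambda(W[\alpha\log|x|])$ is a positive multiple of $\bl$ up to controllable errors --- this is the second, essential use of $\bl\notin\overline{\Gamma}$ that your construction lacks. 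One then sends $\alpha\to 0$. Without some device of this kind the comparison principle on a punctured domain simply fails at the puncture.

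A secondary error: the assertion that "the $\liminf$ hypothesis together with the continuity of $e^v$ yields $\p_{x_n}g>c\,e^v$ on $\p'B^+_r\setminus\{0\}$ for every small $r$" is false when $c>0$, because $\liminf_{x\to 0}v=v(0)$ bounds $v$ only from below near $0$, while $c\,e^{v(x)}>c\,e^{v(0)}+\delta$ whenever $v(x)$ is large. This is recoverable at a lateral touching point (there $v(x^*)\leq g_\epsi(x^*)$ controls $v$ from above, which is how the paper argues via $c\,e^{\ubar{v}}=c\,e^{v+a}>c\,e^{v}$), but it is also the precise reason the lemma asserts the inequality only along a sequence $r_j$ when $c>0$: the paper must choose $r_j$ with $\inf_{\p''B^+_{r_j}}v\to v(0)$ to verify the boundary inequality for its barrier. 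Your write-up, by claiming the boundary inequality for all small $r$, would prove the "moreover" clause for $c>0$ as well, which should have been a warning sign.
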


\begin{proof}
     By Proposition \ref{ms_starter}, $v(0)\coloneqq \liminf_{x\to 0} v(x)>-\infty$. We may assume that $v(0)<\infty$ since otherwise the desired conclusion is obvious.
     
    Consider
      \begin{equation*}
         v_\alpha(x)\coloneqq \alpha \log |x|, ~v_0(x)\coloneqq \widetilde{\delta} x_n + \frac{\tau}{2}|x|^2, ~ \text{and}~  w_\alpha(x)\coloneqq v_\alpha(x)+v_0(x),
     \end{equation*}
     where $\tau=5\widetilde{\delta}^2$ and $\alpha>0$ is a constant which will be sent to $0$. 
     
     For a function $\varphi$, we denote $W[\varphi]\coloneqq e^{2\varphi}A[\varphi]=-\nabla^2\varphi+\nabla \varphi\otimes\nabla\varphi -\frac{1}{2}|\nabla \varphi|^2 I$. A direct computation yields
     \begin{equation*}
         W[w_\alpha]= W[v_0] + \left( W[v_\alpha] + E \right),
     \end{equation*}
     where 
     \begin{equation*}
         E=\nabla v_\alpha \otimes \nabla v_0 +\nabla v_0\otimes \nabla v_\alpha -(\nabla v_\alpha\cdot \nabla v_0) I= \alpha O(|x|^{-1}).
     \end{equation*}
     Here $|O(|x|^{-1})|\leq C|x|^{-1}$, where $C>0$ is independent of $\alpha$, but allowed to depend on $\widetilde{\delta}$. 
     
      By a calculation, $W[v_\alpha](x)=\alpha |x|^{-2}\{ (-1-\frac{\alpha}{2})I +(2+\alpha)\frac{x}{|x|}\otimes \frac{x}{|x|}\}$, and thus we have 
    \begin{equation*}
        \lambda(W[v_\alpha]+E)(x)= \alpha |x|^{-2}\left((1+\frac{\alpha}{2})\bl +O(|x|)\right).
    \end{equation*}
    Since $\bl\notin\overline{\Gamma}$ and $\alpha$ is closed to $0$, there exists $r_1>0$, depending only on $\widetilde{\delta}$, such that 
    \begin{equation}\label{240919-1528}
        \lambda(W[v_\alpha]+E)(x)\in \bR^n\setminus\overline{\Gamma},~0<|x|<r_1.         \end{equation}
    
    By a direct computation, 
     \begin{equation*}
         W[v_0](x)= -(\tau +\widetilde{\delta}^2/2) I +\widetilde{\delta}^2 \bm{e_n}\otimes \bm{e_n} +\tau \widetilde{\delta} O(|x|) +\tau^2 O(|x|^2),
     \end{equation*}
     where $\bm{e_n}=(0,\dots,0,1)$.
     By the above expression,
     there exists $r_2>0$, depending only on $\widetilde{\delta}$, such that
     \begin{equation}\label{240919-1529}
              W[v_0](x)\leq -\widetilde{\delta}^2 I\leq 0,~0<|x|<r_2.
    \end{equation}

     By \eqref{240919-1528}, \eqref{240919-1529}, and $\Gamma+\Gamma_n\subset \Gamma$, we have $\lambda(W[w_\alpha])(x)\in \bR^n\setminus\overline{\Gamma}$ for any $0<|x|<\widetilde{r}\coloneqq \min\{r_1,r_2\}$.
     Namely, $w_\alpha$ is a strict subsolution of the equation $\lambda(A[u])\in\p\Gamma$.

\medskip
     
     The remainder of the proof is divided into two cases.

\smallskip
     \textbf{Case 1:} $c\leq 0$. 
     
     Let $\{r_j\}$ be any sequence of numbers tending to $0$ with $0<r_j<\widetilde{r}$, $\forall j$, and 
    \begin{equation*}
         \ubar{v}_{\alpha}^{(j)}(x)\coloneqq w_\alpha (x) +\inf\limits_{x\in \p'' B_{r_j}^+(0)} \left(v(x)-\widetilde{\delta}x_n-\frac{\tau}{2}|x|^2\right).
    \end{equation*}
    Clearly, we have $\lambda(A[\ubar{v}_{\alpha}^{(j)}])\in \bR^n\setminus\overline{\Gamma}$ in $B_{\widetilde{r}}\setminus\{0\}$, as a strict subsolution,  and $\ubar{v}^{(j)}_{\alpha}\leq v$ on $\p'' B^+_{r_j}(0)$.
    
     We claim that for $j$ large,
     \begin{equation}\label{240206-1610}
        \ubar{v}_{\alpha}^{(j)}\leq v~\text{in}~\overline{B^+_{r_j}}\setminus\{0\}.
     \end{equation}
     
     Suppose the contrary that along a subsequence $j\to\infty$,
     \begin{equation*}
         \ubar{c}\coloneqq  -\inf_{x\in \overline{B^+_{r_j}}}(v-\ubar{v}_{\alpha}^{(j)})(x)>0.
     \end{equation*}
    Since $\ubar{v}_{\alpha}^{(j)}(x)\to -\infty$ as $x\to 0$ and $\ubar{v}^{(j)}_{\alpha}\leq v$ on $\p'' B^+_{r_j}(0)$, there exists some $\ubar{x}\in B_{r_j}^+\cup (\p' B_{r_j}^{+}\setminus\{0\})$ such that $\ubar{c}=-(v-\ubar{v}_{\alpha}^{(j)})(\ubar{x})$. Hence, $\ubar{v}\coloneqq \ubar{v}_{\alpha}^{(j)}-\ubar{c}$ touches $v$ from below at $\ubar{x}$, i.e. $\ubar{v}\leq v$ in $B^+_{r_j}$ and $\ubar{v}(\ubar{x})=v(\ubar{x})$.
    
    There are two possibilities.

    \noindent \textbf{Case 1.1:} $\ubar{x}\in B_{r_j}^+$. By the derivative tests, we have $\nabla \ubar{v}(\ubar{x})=\nabla v(\ubar{x})$ and $\nabla^2 \ubar{v}(\ubar{x})\leq \nabla^2 v(\ubar{x})$.
    Since $\lambda(A[v])\in\overline\Gamma$ in $B_{r_j}^+$ and $\Gamma+\Gamma_n\subset\Gamma$, we have $\lambda(A[\ubar{v}])(\ubar{x})\in\overline{\Gamma}$. On the other hand, by the cone property of $\Gamma$, $
    \lambda(A[\ubar{v}])=        e^{-2 \ubar{c}} \lambda(A[\ubar{v}_{\alpha}^{(j)}])    
    \in \bR^n\setminus\overline{\Gamma}$ in $B_{r_j}^+$, a contradiction.

    \noindent\textbf{Case 1.2:} $\ubar{x}\in\p' B_{r_j}^+\setminus\{0\}$. Using $\frac{\p v}{\p x_n}\leq c\cdot e^v$ on $\p'B_{r_j}^+\setminus\{0\}$, we have $\frac{\p \ubar{v}}{\p x_n}(\ubar{x})\leq 
    c\cdot e^{v(\ubar{x})}$. By $v(\ubar{x})\geq v(0)+o_j(1)$ and $c\leq 0$, we have 
    \begin{equation*}
        \frac{\p \ubar{v}}{\p x_n} (\ubar{x}) \leq c\cdot e^{v(0)}+o_j(1),
    \end{equation*}
    where $o_j(1)\to 0$ as $j\to\infty$.
    On the other hand, a direct computation gives 
    \begin{equation*}
        \frac{\p \ubar{v}}{\p x_n}(\ubar{x})= \widetilde{\delta}= \delta +c\cdot e^{v(0)}. 
    \end{equation*}
    We reach a contradiction for large $j$.

    We have proved the claim \eqref{240206-1610}.
    
    Sending $\alpha$ to $0$ in \eqref{240206-1610}, we obtain that
    \begin{equation*}
       \widetilde{\delta}x_n+ \frac{\tau}{2} |x|^2+\inf\limits_{x\in \p'' B_{r_j}^+(0)} \left(v(x)-\widetilde{\delta}x_n-\frac{\tau}{2}|x|^2\right)\leq v(x),~x\in \overline{B^+_{r_j}}\setminus\{0\}.
      \end{equation*}
     Then, sending $x$ to $0$ in the above, the desired conclusion of Lemma \ref{240204-1953} follows in this case.

\medskip

\textbf{Case 2:} $c> 0$.

      We select 
     $\{r_j\}$ to be a sequence of numbers tending to $0$ such that $v(0)= \lim_{j\to\infty}\inf_{\p'' B^+_{r_j}}v$ and $0<r_j<\widetilde{r}$, $\forall j$. Let $\ubar{v}_{\alpha}^{(j)}$ be defined the same as that in Case $1$. 
     As in Case 1, we have $\lambda(A[\ubar{v}_{\alpha}^{(j)}])\in \bR^n\setminus\overline{\Gamma}$ in $B_{\widetilde{r}}\setminus\{0\}$ and $\ubar{v}^{(j)}_{\alpha}\leq v$ on $\p'' B^+_{r_j}(0)$.

Next we prove that for large $j$, independent of $\alpha$, we have
     \begin{equation}\label{240204-1636}
             \frac{\p\ubar{v}_\alpha^{(j)}}{\p x_n}\geq c\cdot e^{\ubar{v}^{(j)}_{\alpha}}~\text{on}~\p' B_{r_j}^+\setminus\{0\}.
     \end{equation}
     
By a direct computation, for 
$x\in\p'B^+\setminus\{0\}$, we have
\begin{equation*}
     \frac{\p \ubar{v}_{\alpha}^{(j)}}{\p x_n}(x)= \widetilde{\delta}=\delta +c\cdot e^{v(0)},
\end{equation*}
and, using $c\geq 0$ and $|x|\leq 1$,
\begin{align*}
    c\cdot e^{\ubar{v}_{\alpha}^{(j)}(x)} &= c\cdot |x|^{\alpha} \exp \left( {\frac{\tau}{2}|x|^2 +\inf\limits_{x\in \p'' B_{r_j}^+(0)} ( v(x)-\widetilde{\delta}x_n-\frac{\tau}{2}|x|^2 )} \right)\\
    &\leq   
    c\cdot (1+o_j(1)) 
    \exp \left(\inf\limits_{x\in \p'' B_{r_j}^+(0)} v(x) \right)= c\cdot (1+o_j(1))e^{v(0)}.
\end{align*}
Combining the above, since $\delta>0$ is a fixed constant, estimate \eqref{240204-1636} is valid for large $j$. 

Now we prove that 
\eqref{240206-1610} holds in this case. 

Suppose the contrary, then, since $\ubar{v}_{\alpha}^{(j)}(x)\to -\infty$ as $x\to 0$ and $\ubar{v}^{(j)}_{\alpha}\leq v$ on $\p'' B^+_{r_j}(0)$, we have $a\coloneqq (\ubar{v}_{\alpha}^{(j)}-v)(\ubar{x}) =\sup_\Omega(\ubar{v}_{\alpha}^{(j)}-v)>0$ for some $\ubar{x}\in B_{r_j}^+\cup (\p' B_{r_j}^+\setminus\{0\})$. 
Let $\widetilde{\ubar{v}}_{\alpha}^{(j)}\coloneqq \ubar{v}_\alpha^{(j)}-a$, then $\widetilde{\ubar{v}}_{\alpha}^{(j)}\leq v$ in $\overline{B_{r_j}^+}$, $\widetilde{\ubar{v}}_{\alpha}^{(j)}(\ubar{x})= v(\ubar{x})$, and $\lambda(A[\widetilde{\ubar{v}}_{\alpha}^{(j)} ])\in\bR^n\setminus\overline{\Gamma}$ in $B_{r_j}^+$. 

If $\ubar{x}\in B_{r_j}^+$,  we have $\nabla^2 v(\ubar{x})\geq \nabla^2 \widetilde{\ubar{v}}_{\alpha}^{(j)}(\ubar{x})$ and $\nabla v(\ubar{x})=\nabla \widetilde{\ubar{v}}_{\alpha}^{(j)}(\ubar{x})$. By \eqref{eqn-230331-0110} and \eqref{lowerconicalequ}, we have $\lambda(A[\widetilde{\ubar{v}}_{\alpha}^{(j)}])(\ubar{x})\in\overline\Gamma$. A contradiction. 

We are left to rule out the case $\ubar{x}\in\p' B_{r_j}^+\setminus\{0\}$. By the derivative tests, we have $\p_{x_n}v(\ubar{x})\geq \p_{x_n}\widetilde{\ubar{v}}_{\alpha}^{(j)}(\ubar{x})$. By \eqref{240204-1636}, $\p_{x_n}v(\ubar{x})\geq c\cdot e^{\ubar{v}_{\alpha}^{(j)}(\ubar{x})}=c\cdot e^{\widetilde{\ubar{v}}_{\alpha}^{(j)}(\ubar{x})+a}> c\cdot e^{v(\ubar{x})}$. A contradiction with \eqref{lowerconicalequ}. Therefore, \eqref{240206-1610} is proved.

The desired conclusion of Lemma \ref{240204-1953} follows after sending $\alpha$ to $0$ and then $x$ to $0$ in \eqref{240206-1610}, as in Case 1.
\end{proof}

The following lemma will also be used in the proof of Proposition \ref{notouching-thm-av-critical}.
We denote the gradient operator as $\nabla =(\nabla_T, \p_{x_n})$, where $\nabla_T\coloneqq(\p_{x_1},\dots,\p_{x_{n-1}})$.

\begin{lemma}\label{vansihgrad}
    Let $n\geq 2$ and $v$ be a $C^1$ function defined in a neighborhood of $0\in \overline{\hrn}$ with $q \coloneqq \nabla_T v(0) \neq 0$.
Then there exists a M\"obius transformation $\psi$ such that 
\begin{equation*}
    \psi(0) = 0,~ \psi(\hrn)=\hrn,~ \text{and}~ \nabla_T v^\psi (0) = 0.
\end{equation*}
Moreover, all such $\psi$ can be written as
\begin{equation} \label{240201-1609}
    \psi(x) = O \left( \frac{\lambda^2 (x-\bar{x}) }{|x - \bar{x}|^2} + \frac{\lambda^2 \bar{x}}{|\bar{x}|^2} \right),
\end{equation}
where $\lambda \in \bR\setminus\{0\}$, $O=\operatorname{diag}\{O',1 \}$, $O'\in O(n-1)$, and $\bar{x} = \frac{\lambda^2}{2} O^T
     ( q^T, 0)^T$.
\end{lemma}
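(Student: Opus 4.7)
The plan is a direct chain-rule calculation with the ansatz \eqref{240201-1609}, preceded by a short argument to reduce a general such $\psi$ to that form.

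A Möbius transformation $\psi$ of $\overline{\hrn}$ fixing $0$ either fixes $\infty$ or sends $\infty$ to some point of $\p\hrn\setminus\{0\}$. In the first case, $\psi$ is a Euclidean similarity $\psi(x) = \lambda Ox$ with $O = \operatorname{diag}\{O',1\}$; then $\nabla v^\psi(0) = \lambda O^T\nabla v(0)$ has tangential part $\lambda(O')^T q \neq 0$, violating the third condition. In the second case, writing $\psi$ as an inversion centered at some $\bar{x}\in\p\hrn\setminus\{0\}$ followed by a Euclidean similarity (that carries the image of $0$ back to $0$ and preserves $\hrn$), one absorbs the dilation and translation into the parameters and arrives at the form \eqref{240201-1609}.

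Setting $\tilde\psi(x) := \frac{\lambda^2(x-\bar{x})}{|x-\bar{x}|^2} + \frac{\lambda^2\bar{x}}{|\bar{x}|^2}$ so that $\psi = O\circ \tilde\psi$, one checks directly that $\tilde\psi(0)=0$, $d\tilde\psi(0) = \frac{\lambda^2}{|\bar{x}|^2}\bigl(I - 2\bar{x}\bar{x}^T/|\bar{x}|^2\bigr)$, and $|J_\psi(y)| = (\lambda^2/|y-\bar{x}|^2)^n$. Applying the chain rule to $v^\psi = v\circ\psi + \frac{1}{n}\log|J_\psi|$ then yields
\begin{equation*}
    \nabla v^\psi(0) = \frac{\lambda^2}{|\bar{x}|^2}\Bigl(I - \frac{2\bar{x}\bar{x}^T}{|\bar{x}|^2}\Bigr) O^T\nabla v(0) + \frac{2\bar{x}}{|\bar{x}|^2}.
\end{equation*}
Using $\bar{x}_n=0$ and writing $q' := (O')^T q$ and $\bar{x}_T \in \bR^{n-1}$ for the first $n-1$ coordinates of $\bar{x}$, the tangential component simplifies to
\begin{equation*}
    \nabla_T v^\psi(0) = \frac{\lambda^2}{|\bar{x}_T|^2}\Bigl(q' - \frac{2(\bar{x}_T\cdot q')}{|\bar{x}_T|^2}\bar{x}_T\Bigr) + \frac{2\bar{x}_T}{|\bar{x}_T|^2}.
\end{equation*}

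Setting this to zero and rearranging gives $\lambda^2 q' = \bigl(\frac{2\lambda^2(\bar{x}_T\cdot q')}{|\bar{x}_T|^2} - 2\bigr)\bar{x}_T$, which, since $q'\neq 0$, forces $\bar{x}_T\parallel q'$; writing $\bar{x}_T = \alpha q'$ then uniquely determines $\alpha = \lambda^2/2$, i.e.\ $\bar{x} = \frac{\lambda^2}{2}O^T(q^T,0)^T$. This simultaneously produces existence (for arbitrary $\lambda\in\bR\setminus\{0\}$ and $O'\in O(n-1)$) and the characterization that every $\psi$ satisfying the three conditions has the prescribed form. The only delicate point is the chain-rule bookkeeping involving the Möbius Jacobian term $\frac{1}{n}\log|J_\psi|$; once this is handled, the algebraic step pinning down $\bar{x}$ is immediate.
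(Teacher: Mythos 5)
Your proposal is correct and follows essentially the same route as the paper: classify the Möbius transformations fixing $0$ and preserving $\hrn$ (the paper cites Beardon's Theorem 3.5.1 where you sketch the argument directly), discard the similarity case using $q\neq 0$, compute $\nabla v^\psi(0)$ via the chain rule including the $\frac{1}{n}\log|J_\psi|$ term to arrive at the same formula $\nabla v^\psi(0)=\frac{2\bar x}{|\bar x|^2}+\frac{\lambda^2}{|\bar x|^2}\bigl(I-2\frac{\bar x}{|\bar x|}\otimes\frac{\bar x}{|\bar x|}\bigr)O^T\nabla v(0)$, and solve the tangential equation for $\bar x$. Your explicit algebra forcing $\bar x_T\parallel q'$ and $\alpha=\lambda^2/2$ correctly fills in the step the paper compresses into "matching $\nabla_T v^\psi(0)=0$".
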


\begin{proof}
    First, from \cite[Theorem~3.5.1]{MR1393195}, all M\"obius tranforms mapping both zero to zero and $\hrn$ to $\hrn$ can be written as either \eqref{240201-1609} or $\psi(x) = \lambda^2 O x$ with $O=\operatorname{diag}\{O',1\}$, $O'\in O(n-1)$ and ${\bar x}_n=0$.  Clearly, the second possibility can be ruled out as $\nabla_T v^\psi(0) = \lambda^2 O^T q \neq 0$. Now, for $\psi$ in the form of \eqref{240201-1609}, a direct computation shows
\begin{equation*}
    \nabla v^\psi (0) 
    = 
    2 \frac{\bar{x}}{|\bar{x}|^2} 
    + 
    \frac{\lambda^2}{|\bar{x}|^2} \left( I_n - 2 \frac{\bar{x}}{|\bar{x}|} \otimes \frac{\bar{x}}{|\bar{x}|} \right) O^T \nabla v(0).
\end{equation*}
Matching $\nabla_T v^\psi (0) = 0$, we obtain the desired formula.  
\end{proof}

\begin{proof}[Proof of Proposition \ref{notouching-thm-av-critical}]
It is clear that $u(0)\coloneqq \liminf_{x\to 0}u(x)\geq v(0)$.
Suppose the contrary, then $u(0)=v(0)$. 

We only need to consider the case that $\nabla_T v(0)=0$, since otherwise, by Lemma \ref{vansihgrad}, there exists a M\"obius transformation $\psi$ in the form of \eqref{240201-1609} such that $\psi(0) = 0$, $\psi(\hrn)=\hrn$, and $\nabla_T v^\psi (0) = 0$. Then, by the M\"obius invariance of \eqref{notounchingproblem-av-critical}, we may work with $u^\psi$ and $v^\psi$ instead of $u$ and $v$.     
    
Let 
    \begin{equation*}
        v_\epsi (x)\coloneqq v(x) + \epsi(|x|+x_n),
    \end{equation*}
    where $\epsi>0$ is some sufficiently small constant. 

    Next we prove that \begin{equation}\label{critical--proof}
        \lambda(\epsi)\coloneqq -\inf\limits_{\overline{B^+}\setminus\{0\}} (u-v_\epsi) >0.
    \end{equation}

    Indeed, using the boundary conditions in \eqref{notounchingproblem-av-critical}, we have, for $x\in \overline{B^+}\setminus\{0\}$,
    \begin{align}\label{240913-2145}
        u(x)-v_\epsi(x)=u(x)-\left(v(0)+(\frac{\p v}{\p x_n}(0)+\epsi)x_n+\epsi|x|+o(|x|)\right) \notag \\
        \leq u(x) -u(0) -(c\cdot e^{u(0)}+\epsi)x_n -\epsi |x| +o(|x|).
    \end{align}
    By Lemma \ref{240204-1953} (with $v=u$ and $\delta=\epsi$ there), there exists a sequence of points $\{x^j\}\subset\overline{B^+}\setminus\{0\}$ with $x^j\to 0$ as $j\to \infty$ such that
    \begin{equation*}
        u(x^j)-u(0)-(c\cdot e^{u(0)}+\epsi)x^j_n +o(|x^j|)\leq 0,\quad\forall j.
    \end{equation*}
    Combining this with \eqref{240913-2145}, we have
    \begin{equation*}
        u(x^j)-v_\epsi(x^j)\leq -\frac{\epsi}{2} |x^j| <0,\quad \text{for large}~j.
    \end{equation*}
    Hence, \eqref{critical--proof} is proved.

    Since $\liminf_{x\to 0}(u-v_\epsi)(x)=(u-v)(0)=0$, there exists $x_\epsi\in \overline{B^+}\setminus\{0\}$ such that
    \begin{equation}\label{perturbtouching-av}
        u-\widetilde{v_\epsi}\geq 0~\text{in}~\overline{B^+}\setminus\{0\},~\text{and}~(u-\widetilde{v_\epsi})(x_\epsi)=0,
    \end{equation}
    where 
    \begin{equation*}
        \widetilde{v_\epsi}(x)\coloneqq v_\epsi(x)-\lambda(\epsi)=v(x)+\epsi(|x|+x_n)-\lambda(\epsi).
    \end{equation*}
Using the positivity of $u-v$ in $\overline{B^+}\setminus\{0\}$, we deduce from the above that
\begin{equation}\label{240130-1357-av}
    \lambda(\epsi)= -(u-v_\epsi)(x_\epsi)= v(x_\epsi)-u(x_\epsi)+\epsi (|x_\epsi|+(x_\epsi)_n)\leq 2\epsi |x_\epsi|,
\end{equation}
and 
\begin{equation}\label{240130-1348-av}
    \lim\limits_{\epsi\to 0}|x_\epsi|=0.
\end{equation}

We will show, for small $\epsi>0$, that the following holds:
\begin{equation}\label{240125-2006-av-1}
\text{either}~f(\lambda(\av))(x_\epsi)>f(\lambda(A[\widetilde{v_\epsi}]))(x_\epsi)~
    \text{or}~\lambda(A[\widetilde{v_\epsi}])(x_\epsi)\notin\Gamma,     
\quad \text{if}~x_\epsi\in B^+,
\end{equation}
and
\begin{equation}\label{240125-2006-av-2}
        \frac{\p v}{\p x_n}(x_\epsi)+\epsi =\frac{\p\widetilde{v_\epsi}}{\p x_n}(x_\epsi),\quad \text{if}~x_\epsi\in \p' B^+\setminus \{0\}.
\end{equation}
These would lead to a contradiction. Indeed, if $x_\epsi\notin \p' B^+$, equation \eqref{notounchingproblem-av-critical} and \eqref{perturbtouching-av} imply that 
\begin{equation*}
    f(\lambda(A[\widetilde{v_\epsi}]))(x_\epsi)   \geq
    f(\lambda(\av))(x_\epsi).
\end{equation*}
A contradiction to \eqref{240125-2006-av-1}.
On the other hand, if $x_\epsi\in \p' B^+\setminus\{0\}$, by \eqref{notounchingproblem-av-critical} and \eqref{perturbtouching-av}--\eqref{240130-1348-av},
\begin{equation*}
\frac{\p \widetilde{v_\epsi}}{\p x_n}(x_\epsi) \leq c \cdot e^{\widetilde{v}_\epsi(x_\epsi)}= c\cdot e^{v(x_\epsi)}+o(\epsi)
    \leq \frac{\p v}{\p x_n}(x_\epsi)+o(\epsi).
\end{equation*}
The above contradicts to \eqref{240125-2006-av-2} when $\epsi$ is small.

To complete the proof, we show the validity of \eqref{240125-2006-av-1} and \eqref{240125-2006-av-2} for small $\epsi$. Assertion \eqref{240125-2006-av-2} follows from a direct computation. Assertion \eqref{240125-2006-av-1} can be proved similarly to \cite[Proof of Theorem 1.1]{CLN1}. We may assume that $\lambda(A[\widetilde{v_\epsi}])(x_\epsi)\in\Gamma$ for any $\epsi$ small. By a computation and \eqref{240130-1357-av}, we have 
\begin{equation}\label{240212-1253-av}
    |\widetilde{v_\epsi}- v|(x_\epsi)+|\nabla (\widetilde{v_\epsi}-v)|(x_\epsi)=O(\epsi),
\end{equation}
and, for some $O(x)\in O(n)$, 
\begin{equation}\label{240212-1527}
    \nabla^2(\widetilde{v_\epsi}- v)(x)={\epsi}{|x|^{-1}} O(x)^t \cdot \operatorname{diag}\{1,\dots,1,0\}\cdot  O(x).
\end{equation}

Denote 
\begin{equation*}
F(s,p,M)\coloneqq f(\lambda(e^{-2s}(-M+p\otimes p-2^{-1}|p|^2 I))).
\end{equation*}
By \eqref{eqn-240223-0305} and $\lambda(\av)(\overline{B^+})\subset\Gamma$, there exists $b,\delta>0$ such that the set 
\begin{equation*}
    S\coloneqq \{(s,p,M)\mid x\in B^+_{1/2},~\|(s,p,M)-(v,\nabla v,\nabla^2 v)(x)\|\leq \delta \}
\end{equation*}
satisfies 
\begin{equation*}
   -F_{M_{ij}}(s,p,M)\geq b\delta_{ij}\quad \text{a.e.}~(s,p,M)\in S. 
\end{equation*}

\noindent Let
\begin{equation*}
    \overline{t_\epsi}\coloneqq
    \begin{cases}
        0\quad \text{if}~ \|\nabla^2 (\widetilde{v_\epsi}-v)(x_\epsi)\|\leq \delta/5, \\
        1- \delta/(5\|\nabla^2 (\widetilde{v_\epsi}- v)(x_\epsi)\|)\quad \text{if}~ \|\nabla^2 ( \widetilde{v_\epsi}- v)(x_\epsi)\|> \delta/5.
    \end{cases}
\end{equation*}
Then by \eqref{240212-1253-av} and the definition of $\overline{t_\epsi}$,
\begin{equation*}
    ( \widetilde{v_\epsi},\nabla \widetilde{v_\epsi}, \nabla^2 v+(1-t)(\nabla^2\widetilde{v_\epsi}-v))(x_\epsi)\in S,~\forall \overline{t_\epsi}\leq t\leq 1.
\end{equation*}

Now, by \eqref{240212-1253-av}, there exists some $C>0$ independent of $\epsi$,
\begin{align}
    F( v, \nabla v, \nabla^2 v)(x_\epsi)& \geq F(\widetilde{v_\epsi}, \nabla \widetilde{v_\epsi}, \nabla^2 v)(x_\epsi) -C\epsi \notag \\
    &= F( \widetilde{v_\epsi}, \nabla \widetilde{v_\epsi}, \nabla^2 v)(x_\epsi)    - F( \widetilde{v_\epsi}, \nabla \widetilde{v_\epsi}, \nabla^2 \widetilde{v_\epsi})(x_\epsi)  \notag \\ 
    & \quad  + F( \widetilde{v_\epsi}, \nabla \widetilde{v_\epsi}, \nabla^2 \widetilde{v_\epsi}  )(x_\epsi) -C\epsi. \label{240212-1630}  
\end{align}
By the mean value theorem, 
\begin{align*}
 F( \widetilde{v_\epsi}, & \nabla \widetilde{v_\epsi}, \nabla^2 v)(x_\epsi)    - F( \widetilde{v_\epsi}, \nabla \widetilde{v_\epsi}, \nabla^2 \widetilde{v_\epsi})(x_\epsi) \\
 & = \int_0^1 (-F_{M_{ij}})( \widetilde{v_\epsi}, \nabla \widetilde{v_\epsi}, \nabla^2 {v}   + (1-t)\nabla^2( \widetilde{v_\epsi}-v))(x_\epsi) dt \cdot  (\widetilde{v_\epsi}-v)_{ij}(x_\epsi).
\end{align*}
Note that $\nabla^2 {v}   + (1-t)\nabla^2( \widetilde{v_\epsi}-v)=\nabla^2 \widetilde{v_\epsi}+t(\nabla^2 v-\nabla^2 \widetilde{v_\epsi})$, the above is justified by \eqref{240212-1527}, $\lambda(A[\widetilde{v_\epsi}])(x_\epsi)\in \Gamma$, and $\Gamma+\Gamma_n\subset\Gamma$.

By the definition of $\overline{t_\epsi}$ and \eqref{240212-1527}, we have 
 \begin{align*}
 F( \widetilde{v_\epsi}, & \nabla \widetilde{v_\epsi}, \nabla^2 v)(x_\epsi)    - F( \widetilde{v_\epsi}, \nabla \widetilde{v_\epsi}, \nabla^2 \widetilde{v_\epsi})(x_\epsi) \\
 & \geq  \int_{\overline{t_\epsi}}^1 (-F_{M_{ij}})( \widetilde{v_\epsi}, \nabla \widetilde{v_\epsi}, \nabla^2 {v}   + (1-t)\nabla^2( \widetilde{v_\epsi}-v))(x_\epsi) dt \cdot  (\widetilde{v_\epsi}-v)_{ij}(x_\epsi) \\ 
 &\geq b (1-\overline{t_\epsi}) \cdot  \Delta (\widetilde{v_\epsi}-v) (x_\epsi) \geq b(1-\overline{t_\epsi}) \|\nabla^2   (\widetilde{v_\epsi}-v) (x_\epsi) \| \\
 &\geq b \min\{\|\nabla^2   (\widetilde{v_\epsi}-v) (x_\epsi) \|, \delta/5\}\geq b \min\{\epsi/|x_\epsi|,\delta/5\}.
 \end{align*}
Therefore, the assertion \eqref{240125-2006-av-1} follows from \eqref{240130-1348-av}, \eqref{240212-1630} and the above, when $\epsi$ is sufficiently small. Proposition \ref{notouching-thm-av-critical} is proved.
\end{proof} 
The proof of Theorem \ref{nondegenerateliouville-critical} also makes use of the following lemma.
\begin{lemma}\label{diskradial}
   For $n\geq 2$, let $(f,\Gamma)$ satisfy \eqref{eqn-230331-0110} and \eqref{eqn-240223-0305}. Assume that $v\in C^2(B)$ is radially symmetric and satisfies $f(\lambda(\av))=1$ in $B$. Then
   \begin{equation*}
       v(x)\equiv \log \left( a/{(1+b|x|^2)} \right) \quad\text{in}~B,
   \end{equation*}
   where $a,b>0$ satisfy $f(2b a^{-2}\bm{e})=1$ with $\bm{e}=(1,\dots,1)$. 
\end{lemma}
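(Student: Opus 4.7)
The approach is to reduce the equation for radial $v$ to a second-order ODE in one variable and then appeal to ODE uniqueness. The starting observation is that the substitution $w := e^{-v}$ dramatically simplifies $\av$: a direct calculation using $\nabla v = -\nabla w/w$ gives the clean identity
\begin{equation*}
\av = w\nabla^2 w - \tfrac{1}{2}|\nabla w|^2 I.
\end{equation*}
For radial $w=w(r)$, $\nabla^2 w$ has eigenvalue $w''$ in the radial direction and eigenvalue $w'/r$ of multiplicity $n-1$ in the tangential directions, so $\av$ has eigenvalues
\begin{equation*}
\mu(r) := w w'' - \tfrac{1}{2}(w')^2,\qquad \nu(r) := \tfrac{ww'}{r} - \tfrac{1}{2}(w')^2\ \ (\text{multiplicity } n-1),
\end{equation*}
and the equation reduces to $f(\mu(r),\nu(r),\dots,\nu(r)) = 1$ for $r\in[0,1)$.

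Next I would pin down the candidate solution. Since $v\in C^2(B)$ is radial, $w \in C^2(B)$ with $w'(0)=0$; by L'H\^opital, $\nu(0)=w(0)w''(0)=\mu(0)$. Setting $b_0 := w(0)w''(0)$, the diagonal eigenvalue tuple $b_0\bm{e}$ lies in $\Gamma$ with $f(b_0\bm{e})=1$, and since $\Gamma\subset\bR^n\setminus(-\overline{\Gamma_n})$ by \eqref{eqn-230331-0110}, we get $b_0>0$. Define $a := 1/w(0)>0$ and $b := a^2 b_0/2>0$, and consider $\tilde w(r) := (1+br^2)/a$. A short computation verifies that $\tilde w \tilde w'' - \tfrac{1}{2}(\tilde w')^2 = \tilde w\tilde w'/r - \tfrac{1}{2}(\tilde w')^2 \equiv 2b/a^2 = b_0$, so $\tilde v := -\log\tilde w$ solves the same equation; moreover $\tilde w$ matches the $2$-jet of $w$ at the origin: $\tilde w(0)=w(0)$, $\tilde w'(0)=0$, $\tilde w''(0) = ab_0 = w''(0)$.

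Uniqueness is the crux. Condition \eqref{eqn-240223-0305} implies that, in a neighborhood of the solution curve, $\mu\mapsto f(\mu,\nu,\dots,\nu)$ is bi-Lipschitz with positive lower slope $c(K)>0$, so the relation $f(\mu,\nu,\dots,\nu) = 1$ admits a unique Lipschitz solution $\mu = F(\nu)$. Substituting back gives the second-order ODE
\begin{equation*}
w'' = \tfrac{1}{w}\Bigl( F\bigl(\tfrac{ww'}{r} - \tfrac{1}{2}(w')^2\bigr) + \tfrac{1}{2}(w')^2 \Bigr) =: \Phi(r,w,w'),
\end{equation*}
whose right-hand side is locally Lipschitz on $\{r>0,\,w>0\}$; both $w$ and $\tilde w$ satisfy it.

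The main obstacle is that $\Phi$ is singular at $r=0$ (through the $ww'/r$ term), so Picard–Lindel\"of cannot be applied directly at the origin. I would circumvent this by passing to the punctured interval: fix $R\in(0,1)$ and $\epsi\in(0,R)$. Because $w$ and $\tilde w$ share Taylor expansions to order $2$ at $0$,
\begin{equation*}
|w(\epsi)-\tilde w(\epsi)| + |w'(\epsi)-\tilde w'(\epsi)| = o(1) \ \text{ as } \ \epsi\to 0^+.
\end{equation*}
Since $w,\tilde w$ stay in a compact region of $\{r>0,\,w>0\}$ on $[\epsi,R]$, a Gr\"onwall argument applied to $(w-\tilde w, w'-\tilde w')$ gives $\|w-\tilde w\|_{C^1([\epsi,R])}\to 0$ as $\epsi\to 0^+$. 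Hence $w\equiv\tilde w$ on $(0,R)$, and by continuity and the arbitrariness of $R<1$, on all of $[0,1)$. Unwinding $v=-\log w$ yields $v(x)=\log(a/(1+b|x|^2))$ with $f(2ba^{-2}\bm{e}) = f(b_0\bm{e})=1$, as claimed.
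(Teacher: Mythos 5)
Your reduction to the ODE (the substitution $w=e^{-v}$, the eigenvalue formulas for $\mu$ and $\nu$, the positivity of $b_0$, and the $2$-jet matching with $\tilde w$) is correct and is essentially the same strategy as the paper's, which compares $v$ with the candidate bubble via the eigenvalue identities and a continuity method. The crux in both arguments is uniqueness for the ODE across the singular point $r=0$, and this is where your proof has a genuine gap.

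The problem is the Gr\"onwall step on $[\epsi,R]$. Writing $e:=w-\tilde w$, the linearization of $w''=\Phi(r,w,w')$ gives $e''=\frac{F'(\cdot)}{r}\,e'+O(1)(|e|+|e'|)$ near $r=0$, since $\partial_p\Phi(r,w,p)=\frac{1}{w}\bigl(F'(\cdot)(\tfrac{w}{r}-p)+p\bigr)\approx F'(\cdot)/r$. A two-sided Lipschitz bound on $\Phi$ therefore costs a factor $\exp\bigl(\int_\epsi^R C\,ds/s\bigr)=(R/\epsi)^C$ with $C=\sup|F'|$, and $|F'|=\sum_{i\geq 2}\p_{\lambda_i}f/\p_{\lambda_1}f$ need not be small: for $f=\sigma_1$ it equals $n-1$. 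Since the initial discrepancy at $r=\epsi$ is only $o(\epsi)$ (from the $C^2$ jet matching), the product $o(\epsi)\cdot(R/\epsi)^{C}$ does not tend to $0$ when $C>1$, so the conclusion $\|w-\tilde w\|_{C^1([\epsi,R])}\to 0$ does not follow as stated. The missing idea is the \emph{sign} of the singular coefficient: because $\p_{\lambda_i}f>0$ for all $i$, one has $F'=-\sum_{i\geq2}\p_{\lambda_i}f/\p_{\lambda_1}f<0$, so the first-order equation for $e'$ has integrating factor $\exp\bigl(-\int_s^r|F'|\,dt/t\bigr)\leq 1$, yielding $|e'(r)|\leq C\int_0^r(|e|+|e'|)\,ds$ and then $\sup_{[0,r]}|e|\leq Cr^2\sup_{[0,r]}|e|$, which forces $e\equiv0$ near $0$; one then continues by an open--closed argument on $(0,1]$. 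This is exactly the mechanism in the paper's proof, where the relation $\lambda_1^v-\lambda_1^w=-b(r)(\lambda_2^v-\lambda_2^w)$ with $b(r)\geq c>0$ (your $-F'$) produces the damped integrating factor. Once you insert this sign information, your argument closes.
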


 In dimensions $n\geq 3$, when $f$ is $C^1$, Lemma \ref{diskradial} was previously known. See \cite[Theorem 3]{Li-Li_JEMS} (with $v={2}{(n-2)^{-1}}\log u$ there).
 \begin{remark}
      When $\Gamma\subsetneqq\bR^n$ is a non-empty open symmetric set satisfying $\Gamma+\Gamma_n\subset\Gamma$, not necessarily being a cone, 
     the above Lemma still holds with $b>0$ being replaced by $b\geq-1$. This can be seen easily from the proof.
 \end{remark}

\begin{proof}
    The proof is somewhat simpler than that of 
    \cite[Theorem 3]{Li-Li_JEMS}. 
    In the following, we write $r=|x|$ and do not distinguish between $v(r)$ and $v(x)$.  
    Denote the eigenvalues of $\av$ by $\lambda^v=(\lambda^v_1,\dots,\lambda^v_n)$.
    As in the proof of \cite[Lemma 3.1]{CLL-1}, \begin{equation}\label{radialeigenvalue}
        \lambda_1^v= e^{-2 v}(-v''+2^{-1}(v')^2),~\lambda^v_i=e^{-2v}(-r^{-1}v'- 2^{-1}(v')^2),~i=2,\dots,n.
    \end{equation}
Since $v\in C^2(B)$, we must have $v'(0)=0$. Hence, $\lambda_i^v(0)\leq 0$ for $i=2,\dots,n$. By \eqref{eqn-230331-0110} and $\lambda(A[v])(0) \in \Gamma$, we have $\lambda_1^v(0)> 0$, and thus $v''(0) < 0$.

Let 
    \begin{equation*}
        w(x)=\log \left( a/{(1+b|x|^2)} \right),~a=e^{v(0)},~\text{and}~b=-2^{-1}v''(0).  \end{equation*}
    With the above choices of $a$ and $b$, we have
$w(0)=v(0)$, $w'(0)=v'(0)=0$, $w''(0)=v''(0)$, and $\lambda^w(x)\equiv 2b a^{-2} \bm{e}=\lambda^v(0)$, $x\in \bR^n$,
where $\lambda^w=(\lambda^w_1,\dots,\lambda^w_n)$ denotes eigenvalues of $A[w]$. In particular, $f(\lambda(A[w]))=1$ in $\bR^n$. 

Let
\begin{equation*}
    I := \{t\in (0,1]\mid v = w \,\,\text{in}\,\, [0, t] \}.
\end{equation*}
In the following, we prove $I = (0,1]$ by the continuity method.

Firstly, we prove $I \neq \emptyset$. 

Since $\lambda^v(0)=2ba^{-2}\bm{e}$ and $v\in C^2$, there exists some small $\widetilde{r}>0$, such that $\lambda^v(r) \in \overline{B_{ba^{-2}} (2ba^{-2}\bm{e})} \subset \Gamma_n \subset \Gamma$ for any $r < \widetilde{r}$. 
Condition \eqref{eqn-240223-0305} implies that there exist some $\gamma,\nu  \in \Gamma_n$ such that $\gamma \cdot (\lambda-\mu)\leq f(\lambda)-f(\mu)\leq \nu \cdot (\lambda-\mu)$ for all $\lambda,\mu\in \overline{B_{ba^{-2}} (2ba^{-2}\bm{e})} $.
Plugging $\lambda=\lambda^v(r)$ and $\mu=\lambda^w$ in this inequality and using the equations of $v$ and $w$, we obtain that for $r\in(0,\widetilde{r})$,
\begin{equation*}
    \gamma_1(\lambda^v_1(r)-\lambda^w_1)+(\textstyle\sum_{i\geq 2}\gamma_i ) (\lambda_2^v(r)-\lambda_2^w)\leq 0\leq \nu_1 (\lambda^v_1(r)-\lambda^w_1)+(\sum_{i\geq 2}\nu_i ) ( \lambda_2^v(r)-\lambda_2^w).
\end{equation*}
Hence, there exist some constant $c>0$ and some function $b$ on $r$ such that $c \leq b(r) \leq c^{-1}$ for $r\in (0,\widetilde{r})$ and 
\begin{equation} \label{eqn-240223-0358}
    \lambda_1^v(r) - \lambda_1^w(r) = -b(r) (\lambda_2^v (r) - \lambda_2^w (r) ) , \quad \forall r\in (0,\widetilde{r}).
\end{equation}

Using formula \eqref{radialeigenvalue} and $v'(0)=w'(0)$, we have 
\begin{equation*}
    \lambda^v_1(r)-\lambda^w_1(r)= -e^{-2v(r)}(v''(r)-w''(r)) +O(1)(|v(r)-w(r)|+|v'(r)-w'(r)|),
\end{equation*}
and
\begin{equation*}
    \lambda^v_2(r)-\lambda^w_2(r)= -e^{-2v(r)}{(v'(r)-w'(r))}/{r} +O(1)(|v(r)-w(r)|+|v'(r)-w'(r)|).
\end{equation*}
Substituting back to \eqref{eqn-240223-0358}, we have
\begin{equation} \label{eqn-240223-0403}
    v''(r)-w''(r)= - b(r) {(v'(r) - w'(r))}/{r} + O(1)|v(r)-w(r)|,~\forall 0<r<\widetilde{r},
\end{equation}
where $|O(1)|\leq C$. Now we show $v=w$ near $0$. From \eqref{eqn-240223-0403} and $v'(0) = w'(0)$, we have
\begin{equation*}
    v'(r) - w'(r) = \int_0^r e^{-\int_s^r b(t)/t\,dt} O(1) |v(s) - w(s)|\,ds.
\end{equation*}
Recall that $v(0) = w(0)$. Integrating again and using $b(t) > 0$, we have
\begin{equation*}
    |v(r) - w(r)| \leq C r^2 \sup_{s\in [0,r]} |v(s) - w(s)|.
\end{equation*}
This implies $v = w$ in a small neighborhood of $r=0$. Hence, we have proved $I \neq \emptyset$.

Next we prove $I$ is a relatively open subset of $(0,1]$. More precisely, we prove that for any $R \in I$ with $R<1$, there exists some $\epsi>0$ such that $v=w$ on $(0, R+\epsi)$.

By the definition of $I$ and $v, w\in C^2$, we have $v = w$, $v' = w'$ and $v'' = w''$ in $(0,R]$. Hence, again due to $v\in C^2$, there exists some small $\epsi_1>0$ such that $\lambda^v(r) \in B_{ba^{-2}} (2ba^{-2}\bm{e}) \subset \Gamma_n \subset \Gamma$ for any $r \in (R, R+\epsi_1)$. A computation as before yields, instead of \eqref{eqn-240223-0403},
\begin{equation*}
    v''(r) - w''(r) = O(1) (|v'(r) - w'(r)| + |v(r) - w(r)|), \quad \forall r\in (R,R+\epsi_1).
\end{equation*}
Integrating twice and arguing as before, we have $v=w$ in a small neighborhood of $r= R$.

Finally, by the definition of $I$ and $v, w\in C^0$, clearly $I$ is relatively closed. Hence, $I = (0,1]$, i.e. $v=w$ in $B$. The lemma is proved.
\end{proof}

\begin{proof}[Proof of Theorem \ref{nondegenerateliouville-critical}]
\textbf{Step 1:} Prove that for any $x\in \p \hrn$, there exists $\lambda_0(x)>0$ such that 
\begin{equation}\label{step1}
    v^{x,\lambda}\leq v ~\text{on}~\hrn\setminus B_\lambda (x),~\forall 0<\lambda<\lambda_0(x).
\end{equation}

The crucial part of this step is to show that
\begin{equation}\label{starter}
    \liminf_{|x|\to \infty} (v(x)+2\log |x|)>-\infty.
\end{equation}
Since $\lambda(\av)\in \Gamma$ in $\hrn$ and $\p_{x_n}v=c\cdot e^v$ on $\p\hrn$, we know that, by the M\"obius invariance, $\lambda(A[v^{0,1}])\in \Gamma$ in $B^+$ and ${\p_{x_n} v_{0,1}}=c\cdot e^{v_{0,1}}$ on $\p'B^+\setminus\{0\}$. Applying Proposition \ref{ms_starter} to $v^{0,1}$ yields $\liminf_{x\to 0}v^{0,1}(x)>-\infty$, i.e. \eqref{starter} holds.

Assertion \eqref{step1} can be proved by a similar argument to \cite[Proof of Lemma 2.1]{MR2001065}. For reader's convenience, we include a proof here.
Without loss of generality, take $x=0$, and denote $v^{0,\lambda}$ by $v^\lambda$. By the regularity of $v$, there exists $r_0>0$ such that 
\begin{equation*}
    \frac{d}{dr}(v(r,\theta)+\log r)>0,~\forall 0<r<r_0,~\theta\in S^{n-1}\cap \hrn.
\end{equation*}
It follows from the above that 
\begin{equation}\label{240212-1738}
    v^{\lambda}(y)< v(y),~\forall 0<\lambda<|y|<r_0,~y\in \hrn.
\end{equation}
Because of \eqref{starter}, there exists some constant $\alpha\in \bR$ such that 
\begin{equation*}
    v(y)\geq \alpha-2\log|y|,~\forall |y|\geq r_0,~y\in\hrn.
\end{equation*}
Let $\lambda_0\coloneqq \min\{\exp(\frac{1}{2}(\alpha-\max\limits_{\overline{B^+_{r_0}}}v)),r_0\}$. Then 
\begin{equation*}
    v^\lambda(y)\leq 2\log(\frac{\lambda_0}{|y|})+\max_{\overline{B^{+}_{r_0}}} v \leq \alpha -2\log|y|\leq v(y),~\forall 0<\lambda<\lambda_0,~y\in\hrn\setminus B_{r_0}.
\end{equation*}
Combining the above and \eqref{240212-1738}, Step $1$ is completed.

\medskip

By Step 1,
\begin{equation*}
    \bar\lambda(x)\coloneqq \sup\{\mu>0\mid v^{x,\lambda}\leq v ~\text{on}~\overline{\hrn}\setminus B_\lambda(x),~\forall 0<\lambda<\mu\}\in (0,\infty],~\forall x\in\p\hrn.
\end{equation*}

\textbf{Step 2:} Prove that if $\bar\lambda(x)<\infty$ for some $x\in\p\hrn$, then $v^{x,\bar\lambda(x)}\equiv v$ in $\overline \hrn\setminus\{x\}$.

    Without loss of generality, we take $x=0$, and denote $\bar{\lambda}=\bar{\lambda}(0)$ and $v^{\bar{\lambda}}=v^{0,\bar{\lambda}}$. By the definition of $\bar{\lambda}$ and the continuity of $v$,
    \begin{equation}\label{step2}
        v^{\bar{\lambda}}\leq v ~~\text{in}~\overline{\hrn}\setminus B_{\bar{\lambda}}.
    \end{equation}  
Note that from \eqref{halfspace-equ-v-critical},
   \begin{equation*}
f(\lambda(A[v]))= 1
        ~\text{in}\,\, \overline\hrn \setminus {B_{\bar{\lambda}}},
        ~\text{and}~
        \frac{\p v}{\p x_n}=c\cdot e^{v}~\text{on}~\p\hrn\setminus B_{\bar{\lambda}}.
    \end{equation*}    
A calculation gives, using \eqref{halfspace-equ-v-critical} and its M\"obius invariance,
\begin{equation}\label{eqn-230528-1202}
f(\lambda(A[v^{\bar{\lambda}}]))= 1
        ~\text{in}\,\, \overline\hrn \setminus {B_{\bar{\lambda}}},
        ~\text{and}~
        \frac{\p v^{\bar{\lambda}}}{\p x_n}=c\cdot e^{v^{\bar{\lambda}}}~\text{on}~\p\hrn\setminus B_{\bar{\lambda}}.
    \end{equation}

Hence, by the strong maximum principle and the Hopf Lemma (see the proof of \cite[Lemma 3]{Li-Li_JEMS} for details), we may assume 
\begin{equation} \label{eqn-230528-1207}
    v > v^{\bar{\lambda}} \,\,\text{in}\,\,\overline\hrn \setminus \overline{B_{\bar{\lambda}}}
\end{equation}
since otherwise $v^{\bar{\lambda}} \equiv v$ on $\overline\hrn \setminus {B_{\bar{\lambda}}}$, which gives the desired conclusion of Step 2.
Moreover, an application of the Hopf Lemma and \cite[Lemma 10.1] {MR2001065} as that in the proof of \cite[Lemma 3]{Li-Li_JEMS} gives 
\begin{equation} \label{eqn-230528-1212}
    \frac{\p}{\p \nu} (v - v^{\bar{\lambda}}) > 0
    \quad
    \text{on}\,\,
    \p B_{\bar{\lambda}}\cap \overline\hrn,
\end{equation}
where $\nu$ is the unit outer normal to $\p B_{\bar\lambda}$.

Next we prove
\begin{equation} \label{eqn-230528-1209}         \liminf_{|y|\rightarrow \infty} (v-v^{\bar{\lambda}})(y) > 0.
\end{equation}

Similar to \eqref{eqn-230528-1202}, we have
     \begin{equation*}
f(\lambda(A[v^{\bar{\lambda}}]))\geq 1
        ~\text{in}\,\, \overline{B^+_{\bar{\lambda}}},
        ~\text{and}~
        \frac{\p v^{\bar{\lambda}}}{\p x_n}=c\cdot e^{v^{\bar{\lambda}}}~\text{on}~\p' B^{+}_{\bar{\lambda}}\setminus \{0\}.
    \end{equation*}
    Applying Proposition \ref{notouching-thm-av-critical}, we have 
    \begin{equation}\label{38--reformulate}
        \liminf_{x\to 0}(v^{\bar{\lambda}}-v)(x)>0,
    \end{equation}
i.e. \eqref{eqn-230528-1209} holds.
    It follows from \eqref{eqn-230528-1207}--\eqref{eqn-230528-1209} (see the proof of \cite[Lemma 2.2]{MR2001065} and 
    \cite[Lemma 3.2]{Li2021ALT}) that for some $\epsi>0$, $v^{0,\lambda}(y)\leq v(y)$, for every $y\in\overline\hrn\setminus B_{\lambda}^+$ and $\bar{\lambda} \leq \lambda < \bar{\lambda} + \epsi$, violating the definition of $\bar{\lambda}$.

\medskip

\textbf{Step 3:} We prove that either $\bar{\lambda}(x)<\infty$ for all $x\in \p\hrn$ or $\bar{\lambda}(x)=\infty$ for all $x\in\p\hrn$.

Suppose for some $x\in \p\hrn$, $\bar{\lambda}(x)=\infty$. By definition, $v^{x,\lambda}(y) \leq v(y)$ on $\overline\hrn \setminus B_{\lambda}(x)$ for all $\lambda>0$. Adding both sides by $2\log |y|$ and sending $|y|$ to $\infty$, we have $v(x)+2\log \lambda \leq \liminf_{|y|\rightarrow \infty} (v(y)+2 \log|y|) $ for all $\lambda>0$. Hence, $\lim_{|y|\rightarrow \infty} (v(y)+2\log|y|) = \infty$.

On the other hand, if $\bar{\lambda}(x) < \infty$ for some $x\in\p\hrn$, we have, by Step 2, $v^{x,\bar{\lambda}(x)}\equiv v$ in $\hrn\setminus\{x\}$. Therefore, $\lim_{|y|\rightarrow \infty} (v(y)+2\log|y|) = v(x)+2\log \bar{\lambda}(x) < \infty$. Step 3 is proved.

\medskip

\textbf{Step 4:} We prove $\bar{\lambda}(x)<\infty$ for all $x\in\p\hrn$.

If not, by Step 3, $\bar{\lambda}(x)=\infty$ for all $x\in\p\hrn$.
Consequently, $v^{x,\lambda} (y) \leq v(y)$ for all $x\in\p\hrn$, $y\in\hrn\setminus B_{\lambda}(x)$, and $\lambda>0$. 
 This implies, by \cite[lemma 11.3]{MR2001065}, that 
$v(x',x_n)\equiv v(0',x_n)$, $\forall x'\in\bR^{n-1}$, $x_n\geq 0$.

In the following, we do not distinguish  $v(x)$ with $v(x_n)$, and write $v'$ for $dv/dx_n$. 

A direct computation gives that 
\begin{equation}\label{h-av-eigenvalue-critical}
    \lambda(\av)= (\lambda_1,\lambda_2,\dots,\lambda_2),
\end{equation}
where $\lambda_1\coloneqq -v''e^{-2v}+2^{-1}(v')^2 e^{-2v}$ and $\lambda_2=-2^{-1}(v')^2 e^{-2v}$. 

Since $\bl\notin\overline\Gamma$, i.e. $\mgn>1$, and $\lambda(\av)\in\Gamma$ in $\overline\hrn$, we have $\lambda_1+\mgn\lambda_2>0$ in $[0,\infty)$. Hence, denote $\mgn\coloneqq \delta+1$ for some $\delta>0$,
\begin{equation}\label{h2-critical}
   v''<-2^{-1}\delta (v')^2~~\text{in}~[0,\infty).
\end{equation} 
In particular, $ v''<0$ in $[0,\infty)$. 
The above also implies that
\begin{equation}\label{h1-critical}
    v'> 0~~\text{in}~[0,\infty).
\end{equation}
Indeed, if $v'\leq 0$ for some $\bar{x}_n\geq 0$, then, by \eqref{h2-critical}, we must have $v'<0$ in $(\bar{x}_n,\infty)$.
Integrating \eqref{h2-critical} twice, we obtain that 
$v(x_n)\leq b+ 2\delta^{-1}\log|2^{-1}\delta (x_n-\bar{x}_n-1)+(v')^{-1}(\bar{x}_n+1)|$ for some $b\in\bR$. This is a contradiction since the right hand side tends to $-\infty$ as $x_n\to -2 (\delta v'(\bar{x}_n+1))^{-1}+\bar{x}_n+1$. Hence, \eqref{h1-critical} is proved.
 
By $v'(0)=c\cdot e^{v(0)}$ and \eqref{h1-critical},
we are left to rule out the possibility of $c>0$. Integrating \eqref{h2-critical}, we obtain $v'(x_n)\leq 1/(2^{-1}\delta x_n +(v')^{-1}(0))$ for $x_n\geq 0$. In particular, we have
\begin{equation}\label{hclaim-critical}
   v'(x_n)\to 0\quad \text{as}~~x_n\to\infty.
\end{equation}

By \eqref{h-av-eigenvalue-critical}, \eqref{h1-critical}, and \eqref{hclaim-critical}, 
\begin{equation*}
    \lambda(\av)(x_n)=(-v''(x_n) e^{-2 v(x_n)},0,\dots,0)+o(1)\quad \text{as}~~ x_n\to\infty.
\end{equation*}
By equation \eqref{halfspace-equ-v-critical} and condition \eqref{natural-assumption}, there exists some constant $\delta_0>0$ such that 
\begin{equation*}
    -v''(x_n)e^{-2 v(x_n)}\geq \delta_0\quad \text{for}~ x_n~\text{large}. 
\end{equation*}
 By \eqref{h1-critical}, we have $-v''(x_n)\geq \delta_0 e^{2 v(0)}$. Integrating this gives $v'(x_n)\leq a-\delta_0 e^{2 v(0)} x_n$ for some $a\geq 0$. A contradiction to \eqref{h1-critical} for $x_n$ large. Step 4 is proved.

\medskip

Now, by Step 2 and 4, $\bar{\lambda}(x)<\infty$ for all $x\in\p\hrn$, and thus 
\begin{equation}\label{240216-1706}
    v^{x,\bar{\lambda}(x)} \equiv v~\text{in}~\overline\hrn\setminus\{x\},~\forall x\in\p\hrn.
\end{equation}
It follows, by \cite[Lemma 2.5]{Li-Zhu} (see also \cite[Lemma A.2]{Li2021ALT}), that
\begin{equation}\label{240216-1707}
    v(x',0)=\log \left(\frac{\hat{a}}{|x'-\bar{x}'|^2+d^2} \right),~x\in\p\hrn, 
\end{equation}
for some $\bar{x}\in\p\hrn$ and $\hat{a},d>0$.

Consider spheres $\p B_{\bar{\lambda}(x)}(x)$ for $x\in\p\hrn$. Using \eqref{240216-1706} at $\infty$ and the explicit formula \eqref{240216-1707}, we obtain that $P,Q\in \p B_{\bar{\lambda}(x)}(x)$, $x\in\p\hrn$, where $P=(\bar{x}',-d)$ and $Q=(\bar{x}',d)$. Consider the M\"obius transform 
\begin{equation*}
\psi(z)\coloneqq P+\frac{4 d^2 (z-P)}{|z-P|^2}~ \text{and} 
~w(z)\coloneqq v^{P,2d}(z)=2\log\left( \frac{2d}{|z-P|} \right) + v\circ\psi (z).
\end{equation*}
By the properties of M\"obius transforms, $\psi(\hrn)=B_{2d}(Q)$ and $\psi$ maps every sphere $\p B_{\bar{\lambda}(x)}(x)$, $x\in\p\hrn$, to every hyperplane through $Q$. By \eqref{240216-1706}, $w$ is symmetric with respect to all hyperplanes through $Q$, and thus $w$ is radially symmetric about $Q$ in $B_{2d}(Q)$.

By equation \eqref{halfspace-equ-v-critical} and its M\"obius invariance, 
\begin{equation*}
    f(\lambda(A[w]))=1\quad \text{in}~B_{2d}(Q).
\end{equation*}
By Lemma \ref{diskradial},
\begin{equation*}
    w(z)\equiv \log \big( \frac{\bar{a}}{1+\bar{b}|z-Q|^2} \big)\quad \text{in}~B_{2d}(Q).
\end{equation*}
Since $v(x)=w^{P,2d}(x)$, together with \eqref{240216-1707}, we have
\begin{equation*}
    v(x',x_n)\equiv \log \big(\frac{a}{1+b|(x',x_n)-(\bar x',\bar x_n )|^2}\big),
\end{equation*}
where $a=(1-s^2)^{-1}d^{-2} \hat{a}$, $b=(1-s^2)^{-1}d^{-2}$, $\bar{x}_n=ds$, and $s=(4\bar{b}d^2-1)/(4\bar{b}d^2+1)$. Here $\hat{a}$, $d$ and $\bar{b}$ are positive numbers given above. It is easy to verify that $a$, $b$ and $\bar{x}_n$ have full range of $\bR_+$, $\bR_+$ and $\bR$, respectively. Plugging the above form of $v$ back to the equation, we must have $f(2a^{-2}b\bm{e})=1$ and $2 a^{-1}b \bar x_n=c$.

\end{proof}

To end this section, we provide counterexamples in Remark \ref{ms_starter-optimal-remark}. In the following, we denote $r=|x|$ and do not distinguish $v(r)$ and $v(x)$ when $v$ is radially symmetric.

\begin{example}\label{reex1}
    Let $v = \alpha \log r$, $\alpha \in (0,\infty)$. By \cite[formula $(23)$]{CLL-1},
    \begin{equation*}
        \lambda(A[v]) = C(r) (1,-1,\ldots,-1)\quad \text{and}\quad
        C(r) = 2^{-1} e^{-2v}  v' (v+ 2 \log r )' >0.
    \end{equation*}
    Clearly, for any cone $\Gamma$ with $\bl\in\overline{\Gamma}$, i.e. $\mgn \leq 1$, $\lambda(A[v]) \in \overline{\Gamma}$ in $\bR^n\setminus \{0\}$. It is easy to see that 
    $\frac{\p v}{\p x_n}=0$ on $\p\hrn\setminus\{0\}$.
    However, $\liminf_{r\rightarrow 0} v(r) = -\infty$. 

    One can also construct counterexamples satisfying the boundary condition of \eqref{lowerconicalequ} for any $c<0$. Let $v(x)=\alpha\log|x|+2c x_n-10 c^2 |x|^2$ with $\alpha>0$ sufficiently small. By a similar computation as that for $w_\alpha$ in the proof of Lemma \ref{240204-1953}, for any cone $\Gamma$ with $\bl\in\Gamma$, i.e. $\mgn<1$, $\lambda(\av)\in \Gamma$ in $B_r\setminus\{0\}$ and $\frac{\p v}{\p x_n}\leq c\cdot e^v$ on $\p' B_r\setminus\{0\}$, for $r>0$ small. However, $\liminf_{x\to 0} v(x) = -\infty$.
    \end{example}

\section{Proof of Theorem \ref{nondegenerateliouville}}\label{proof-1.2}
Theorem \ref{nondegenerateliouville} can be proved in the same vein as the proof of Theorem \ref{nondegenerateliouville-critical}. We outline its proof here, focusing only on the differences.

\begin{proposition}\label{notouching-thm-av-subcritical}
    For $n\geq 2$ and $c,p\in\bR$, let $(f,\Gamma)$ satisfy \eqref{eqn-230331-0110} and \eqref{eqn-240223-0305} with $\bl\notin\overline\Gamma$.
    Suppose that $u\in C^2(\overline{B^+}\setminus\{0\})$ and $v\in C^2(\overline{B^+})$ satisfy $\lambda(\av)(\overline{B^+})\subset\Gamma$,
\begin{equation}\label{notounchingproblem-av-subcritical}
    \begin{cases}
         e^{pu} f(\lambda(A[u])) \geq e^{pv} f(\lambda(\av))~~  \text{in}~{B^+},\\
        \frac{\p u}{\p x_n}\leq c \cdot e^u~~\text{on}~\p' B^+\setminus\{0\},~~  \frac{\p v}{\p x_n}\geq c \cdot e^v~~ \text{on}~\p' B^+, 
        \end{cases}
\end{equation}
and $u>v$ in $B^+$.
Then $\liminf\limits_{x\to 0} (u-v)(x)>0$.
\end{proposition}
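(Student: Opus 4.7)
The plan is to adapt the proof of Proposition \ref{notouching-thm-av-critical} nearly verbatim. A key observation is that Proposition \ref{ms_starter} and Lemma \ref{240204-1953} depend only on the conical condition $\lambda(\av)\in\overline\Gamma$ and the boundary inequality $\partial v/\partial x_n\leq c\cdot e^v$, neither of which involves the exponent $p$, so these supporting tools apply without change. Proceeding by contradiction, I would suppose $\liminf_{x\to 0}u(x) = v(0)$, reduce via Lemma \ref{vansihgrad} and the M\"obius invariance of \eqref{notounchingproblem-av-subcritical} to the case $\nabla_T v(0) = 0$, introduce the perturbation $v_\epsi(x):=v(x)+\epsi(|x|+x_n)$, and produce a touching point $x_\epsi \in B^+ \cup (\partial' B^+\setminus\{0\})$ at which $u = \widetilde{v_\epsi} := v_\epsi - \lambda(\epsi)$, with $\lambda(\epsi)\leq 2\epsi|x_\epsi|$ and $|x_\epsi|\to 0$ as $\epsi\to 0$. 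The derivation of \eqref{240913-2145} and the positivity of $\lambda(\epsi)$ carry over unchanged since they use only $\partial u/\partial x_n \leq c\cdot e^u$ and Lemma \ref{240204-1953}.

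The boundary subcase $x_\epsi\in\partial'B^+\setminus\{0\}$ is contradicted verbatim as in the critical case, because the boundary conditions in \eqref{notounchingproblem-av-subcritical} coincide with those in \eqref{notounchingproblem-av-critical}. The essential new point is the interior subcase $x_\epsi\in B^+$, where one must compare $f(\lambda(A[\widetilde{v_\epsi}]))$ and $f(\lambda(\av))$ at $x_\epsi$ in the presence of the exponential factor. The touching property together with monotonicity of $f$ yields $f(\lambda(A[u]))(x_\epsi)\leq f(\lambda(A[\widetilde{v_\epsi}]))(x_\epsi)$, assuming $\lambda(A[\widetilde{v_\epsi}])(x_\epsi)\in\Gamma$ (otherwise we are already done). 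Combining with the equation and the fact that $u(x_\epsi) = \widetilde{v_\epsi}(x_\epsi) = v(x_\epsi) + O(\epsi)$, this produces the lower bound
\begin{equation*}
f(\lambda(A[\widetilde{v_\epsi}]))(x_\epsi) \;\geq\; e^{p(v-\widetilde{v_\epsi})(x_\epsi)}\, f(\lambda(\av))(x_\epsi) \;=\; f(\lambda(\av))(x_\epsi) + O(\epsi).
\end{equation*}

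On the other hand, the mean-value-theorem computation from the end of the proof of Proposition \ref{notouching-thm-av-critical}, using the uniform ellipticity \eqref{eqn-240223-0305} on a suitable neighborhood of $(v,\nabla v,\nabla^2 v)(x_\epsi)$, yields the opposite-sign estimate
\begin{equation*}
f(\lambda(A[\widetilde{v_\epsi}]))(x_\epsi) - f(\lambda(\av))(x_\epsi) \;\leq\; -b\min\{\epsi/|x_\epsi|,\,\delta/5\} + C\epsi.
\end{equation*}
For $\epsi$ small, the right-hand side is strictly less than the $O(\epsi)$ lower bound above (either because $\min = \delta/5$ is a fixed positive constant when $|x_\epsi|$ is small, or because $b\epsi/|x_\epsi|$ beats $C\epsi$ once $|x_\epsi|<b/C$, which holds for small $\epsi$ since $|x_\epsi|\to 0$), yielding the contradiction. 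The only obstacle specific to the subcritical setting is controlling the multiplicative factor $e^{p(v-\widetilde{v_\epsi})(x_\epsi)}$; this is handled by $|v-\widetilde{v_\epsi}|(x_\epsi) = O(\epsi)$ together with local boundedness of $f(\lambda(\av))$ near $0$, so that the resulting additive $O(\epsi)$ perturbation is absorbed into the $C\epsi$ slack already present in the ellipticity estimate.
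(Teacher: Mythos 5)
Your proposal is correct and follows essentially the same route as the paper: the paper likewise reduces to $\nabla_T v(0)=0$ via Lemma \ref{vansihgrad} and Möbius invariance (the common positive conformal factor preserving the inequality), reuses the touching-point construction from the critical case, and modifies only the interior alternative to $e^{pv}f(\lambda(\av))>e^{p\widetilde{v_\epsi}}f(\lambda(A[\widetilde{v_\epsi}]))$ at $x_\epsi$. The only cosmetic difference is that the paper absorbs the factor $e^{ps}$ into the definition of $F(s,p,M)$ before running the mean-value-theorem estimate, whereas you treat it as an explicit multiplicative $1+O(\epsi)$ perturbation absorbed into the $C\epsi$ slack; both are the same argument.
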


\begin{proof}
As in the proof of Proposition \ref{notouching-thm-av-critical}, we suppose by contradiction that $u(0)\coloneqq \liminf_{x\to 0}u(x)=v(0)$. 

Suppose that we have already reached a contradiction when $\nabla_T v(0)=0$. Now we derive a contradiction for the remaining case that $\nabla_T v(0)\neq 0$.
By Lemma \ref{vansihgrad}, there exists a M\"obius transformation $\psi$ in the form of \eqref{240201-1609} such that $\psi(0) = 0$, $\psi(\hrn)=\hrn$, and $\nabla_T v^\psi (0) = 0$.
By \eqref{notounchingproblem-av-subcritical} and its M\"obius invariance, a direct computation gives that, for some $\delta>0$,  $\lambda(A[v^\psi])(\overline{B_\delta^+})\subset\Gamma$,
$\frac{\p u^\psi}{\p x_n}\leq c \cdot e^{u^\psi}$ on $\p' B_\delta^+\setminus\{0\}$, $\frac{\p v^\psi}{\p x_n}\geq c \cdot e^{v^\psi}$ on $\p' B^+_\delta$, and
\begin{gather*}
    e^{p u^\psi} f(\lambda(A[u^\psi])) -
    e^{p v^\psi} f(\lambda(A[v^\psi]))  \\
    =  \lambda^{2p}|\cdot-\bar{x}|^{-2p}    \left( e^{p u\circ\psi} f(\lambda(A[u\circ \psi])) -
    e^{p v\circ\psi} f(\lambda(A[v\circ\psi])) \right)\geq 0 \quad \text{in}~B_\delta^+.
\end{gather*}

Since $\nabla_T v^\psi(0)=0$,  we can reach a contradiction by applying the conclusion of the former case to $u^\psi$ and $v^\psi$. 

Therefore, we only need to consider the case when $\nabla_T v(0)=0$. The remaining proof is essentially same as that of Proposition \ref{notouching-thm-av-critical} except that we should replace \eqref{240125-2006-av-1} by the following:
\begin{equation*}
\begin{cases}
\text{either}~e^{p v(x_\epsi)}f(\lambda(\av))(x_\epsi)
>
e^{p\widetilde{v}_\epsi(x_\epsi)}
f(\lambda(A[\widetilde{v_\epsi}]))(x_\epsi),\\
\text{or}~\lambda(A[\widetilde{v_\epsi}])(x_\epsi)\notin\Gamma,     
\end{cases} \quad 
\text{if}~x_\epsi\in B^+.
\end{equation*}

The same proof as that of \eqref{240125-2006-av-1} will give the above, except that one should replace the definition of $F$ there by the following:
\begin{equation*}
 F(s,p,M)\coloneqq e^{ps}f(\lambda(e^{-2s}(-M+p\otimes p-2^{-1}|p|^2 I))).
\end{equation*}
Hence, Proposition \ref{notouching-thm-av-subcritical} is proved.
\end{proof} 

\begin{proof}[Proof of Theorem \ref{nondegenerateliouville}]
\textbf{Step 1:} Same as the proof of Theorem \ref{nondegenerateliouville-critical}, we can show that for any $x\in \p \hrn$, there exists $\lambda_0(x)>0$ such that \eqref{step1} holds.

Therefore, same as before, we can define $\bar{\lambda}(x)$ for every $x\in\p\hrn$. 

\textbf{Step 2:} Prove that if $\bar\lambda(x)<\infty$ for some $x\in\p\hrn$, then $v^{x,\bar\lambda(x)}\equiv v$ in $\overline \hrn\setminus\{x\}$.
Same as before, we only need to prove \eqref{step2}.

A calculation gives, using \eqref{halfspace-equ-v} and the M\"obius invariance,
\begin{equation}\label{eqn-230528-1202-sub}
f(\lambda(A[v^{\bar{\lambda}}]))= (\frac{\bar\lambda}{|y|})^{2p} e^{-p v^{\bar\lambda}}\leq e^{-p v^{\bar\lambda}}
        ~\text{in}\,\, \overline\hrn \setminus {B_{\bar{\lambda}}},
        ~\text{and}~
        \frac{\p v^{\bar{\lambda}}}{\p x_n}=c\cdot e^{v^{\bar{\lambda}}}~\text{on}~\p\hrn\setminus B_{\bar{\lambda}}.
    \end{equation}
Note also from \eqref{halfspace-equ-v}
   \begin{equation*}
f(\lambda(A[v]))= e^{-p v}
        ~\text{in}\,\, \overline\hrn \setminus {B_{\bar{\lambda}}},
        ~\text{and}~
        \frac{\p v}{\p x_n}=c\cdot e^{v}~\text{on}~\p\hrn\setminus B_{\bar{\lambda}}.
    \end{equation*}
From the above and \eqref{eqn-230528-1202-sub}, we have
\begin{equation*}
    f(\lambda(A[v^{\bar{\lambda}}]))-f(\lambda(\av))
    -(e^{-p v^{\bar\lambda}}-e^{-pv})\leq 0\quad \text{in}~\overline\hrn\setminus B_{\bar\lambda}.   
\end{equation*}
Hence, using the strong maximum principle and the Hopf Lemma as before, we may assume \eqref{eqn-230528-1207} holds.
Moreover, an application of the Hopf Lemma and \cite[Lemma 10.1] {MR2001065} as before gives 
\eqref{eqn-230528-1212}.
Next we prove \eqref{eqn-230528-1209}.
Similar to \eqref{eqn-230528-1202-sub}, we have
     \begin{equation*}
f(\lambda(A[v^{\bar{\lambda}}]))\geq e^{-p v^{\bar\lambda}}
        ~\text{in}\,\, B^+_{\bar{\lambda}},
        ~\text{and}~
        \frac{\p v^{\bar{\lambda}}}{\p x_n}=c\cdot e^{v^{\bar{\lambda}}}~\text{on}~\p' B^{+}_{\bar{\lambda}}\setminus \{0\}.
\end{equation*}
Applying Proposition \ref{notouching-thm-av-subcritical}, we obtain \eqref{38--reformulate}, i.e. \eqref{eqn-230528-1209} holds.
Then we can complete Step 2 by the same proof as that of Theorem \ref{nondegenerateliouville-critical}.

\textbf{Step 3:} Same as before, we can prove that either $\bar{\lambda}(x)<\infty$ for all $x\in \p\hrn$ or $\bar{\lambda}(x)=\infty$ for all $x\in\p\hrn$.

By Step 3, the remainder of the proof is divided into two cases. 

\noindent
\textbf{Case 1:} $\bar{\lambda}(x)=\infty$ for all $x\in\p\hrn$.
As before, by \cite[lemma 11.3]{MR2001065}, this case leads to Alternative (ii).

\noindent
\textbf{Case 2:} $\bar{\lambda}(x)<\infty$ for all $x\in \p\hrn$. As before, we can show that \eqref{240216-1706} and \eqref{240216-1707} hold. It follows from \eqref{240216-1706}, by equation \eqref{halfspace-equ-v} and the M\"obius invariance, that we have $p=0$. By the same arguments as before, we can show that Alternative (i) holds.
\end{proof}

\section{Solutions on one variable}\label{ode-sec}
In this section, we work with function $v$ on one variable $x_n$.
We do not distinguish between $v(x)$ and $v(x_n)$, and write $v'$ for $d v / d x_n$.
A direct computation gives that 
\begin{equation}\label{240304-1443}
        \lambda(A[v]) = (\lambda_1,\lambda_2,\dots,\lambda_2),
\end{equation}
where $\lambda_1=-v'' e^{-2v}+2^{-1}(v')^2 e^{-2v} $ and $\lambda_2= -2^{-1}(v')^2 e^{-2v}\leq 0$.

\medskip

For any $\Gamma$ satisfying \eqref{eqn-230331-0110} with $\bm{e_n}\in\p\Gamma$, it must hold that $\lambda(\av)\notin\Gamma$ pointwisely for any function $v=v(x_n)$. Hence, the following discussion focuses on the case of $\bm{e_n}\notin\p\Gamma$. The following lemma is useful.
\begin{lemma}\label{xn-convexity}
    For $n\geq 2$, let $\Gamma$ satisfy \eqref{eqn-230331-0110} with $\bm{e_n}\notin\p\Gamma$, i.e. $\mgn<\infty$. Assume that a function $v$ on one variable $x_n$ satisfies $\lambda(\av)\in\overline\Gamma$ (resp. $\Gamma$) at some $x_0\in\bR^n$.
    \begin{enumerate} [label=(\alph*)]
    \item If $\bl\notin\p\Gamma$, i.e. $\mgn\neq 1$, then $\frac{\mgn-1}{2}(e^{\frac{\mgn-1}{2}v})''\leq 0$ (resp. $<0$) at $x_0$.
    \item If $\bl\in\p\Gamma$, i.e. $\mgn=1$, then $v''\leq 0$ (resp. $<0$) at $x_0$.
    \end{enumerate}
\end{lemma}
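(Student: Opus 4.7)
The plan is to convert the algebraic condition $\lambda(\av)(x_0)\in\overline{\Gamma}$ into a pointwise differential inequality on $v$ at $x_0$, and to recognize that inequality as precisely the sign of the quantity $\tfrac{\mgn-1}{2}\bigl(e^{(\mgn-1)v/2}\bigr)''$ appearing in the stated conclusion.

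First, taking closures in \eqref{eqn-230331-0110} one has $\overline{\Gamma}+\overline{\Gamma_n}\subset\overline{\Gamma}$, so the set
\begin{equation*}
S:=\{c\in\bR:(c,-1,\ldots,-1)\in\overline{\Gamma}\}
\end{equation*}
is closed and upward-closed; since $\mgn<\infty$ by hypothesis, $S=[\mgn,\infty)$. When $v'(x_0)\ne 0$, the eigenvalue $\lambda_2=-\tfrac12(v')^2 e^{-2v}$ is strictly negative, and dividing the eigenvalue vector $(\lambda_1,\lambda_2,\ldots,\lambda_2)$ from \eqref{240304-1443} by $|\lambda_2|>0$ and invoking the conic, symmetric property of $\Gamma$ gives
\begin{equation*}
\lambda(\av)(x_0)\in\overline{\Gamma} \iff \lambda_1/|\lambda_2|\ge\mgn \iff \lambda_1+\mgn\lambda_2\ge 0,
\end{equation*}
with strict inequality when $\lambda(\av)(x_0)\in\Gamma$. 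A short calculation using \eqref{240304-1443} yields
\begin{equation*}
\lambda_1+\mgn\lambda_2 \;=\; -\tfrac12 e^{-2v}\bigl(2v''+(\mgn-1)(v')^2\bigr),
\end{equation*}
so at $x_0$ the condition becomes $2v''(x_0)+(\mgn-1)(v'(x_0))^2\le 0$ (resp.\ $<0$).

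Setting $\alpha=(\mgn-1)/2$, a direct differentiation gives
\begin{equation*}
\alpha\,\bigl(e^{\alpha v}\bigr)'' \;=\; \tfrac{\alpha^2}{2}\,e^{\alpha v}\bigl(2v''+(\mgn-1)(v')^2\bigr).
\end{equation*}
In case (a) where $\mgn\ne 1$, $\alpha^2>0$, so the inequality just derived is exactly the stated conclusion $\tfrac{\mgn-1}{2}(e^{(\mgn-1)v/2})''\le 0$ (resp.\ $<0$). In case (b) where $\mgn=1$, the inequality collapses directly to $v''(x_0)\le 0$ (resp.\ $<0$).

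The main obstacle I anticipate is the degenerate case $v'(x_0)=0$, where $\lambda_2(x_0)=0$ and the scaling step above fails. In that case $\lambda(\av)(x_0)=-v''(x_0)e^{-2v(x_0)}\bm{e_n}$, so I must rule out $v''(x_0)>0$ directly. To do this I plan to use that the assumption $\bm{e_n}\notin\p\Gamma$, together with $\bm{e_n}\in\overline{\Gamma_n}\subset\overline{\Gamma}$, forces $\bm{e_n}\in\Gamma$; combined with $\Gamma+\Gamma_n\subset\Gamma$ (and its closure version), this should preclude any strictly negative multiple of $\bm{e_n}$ from $\overline{\Gamma}$ (resp.\ $\Gamma$), completing the proof in both cases. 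The strict inequality versions follow by tracking strict inequalities throughout.
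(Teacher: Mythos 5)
Your main computation is correct and follows essentially the same route as the paper: both arguments reduce the membership $\lambda(\av)(x_0)\in\overline\Gamma$ to the scalar inequality $\lambda_1+\mgn\lambda_2\geq 0$ by normalizing by $|\lambda_2|$ and invoking the definition of $\mgn$, and then identify $\lambda_1+\mgn\lambda_2$ with a positive multiple of $-\tfrac{\mgn-1}{2}(e^{(\mgn-1)v/2})''$ (the paper packages the same computation by substituting $\varphi=e^{(\mgn-1)v/2}$ at the outset). You also correctly single out the degenerate case $v'(x_0)=0$ as the delicate point; the paper's one-line justification passes over it in silence.

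The gap is in your proposed treatment of that degenerate case. The claim that $\bm{e_n}\in\Gamma$ together with $\Gamma+\Gamma_n\subset\Gamma$ precludes a strictly negative multiple of $\bm{e_n}$ from lying in $\overline\Gamma$ is false. For $n\geq 3$ take $\Gamma=\{\lambda:\lambda_{(1)}+\tfrac12\lambda_{(2)}>0\}$, where $\lambda_{(1)}\geq\dots\geq\lambda_{(n)}$ is the decreasing rearrangement: this cone satisfies \eqref{eqn-230331-0110}, has $\mgn=\tfrac12$ and $\bm{e_n},\bl\in\Gamma$, and yet $-\bm{e_n}\in\p\Gamma\subset\overline\Gamma$, since every neighborhood of $(0,\dots,0,-1)$ contains points with two positive entries. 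Accordingly, $v(x)=\tfrac12 x_n^2$ at $x_0=0$ gives $\lambda(\av)(x_0)=(-1,0,\dots,0)\in\overline\Gamma$ while $v''(x_0)>0$, so the non-strict conclusion in the degenerate case cannot be derived from the stated hypotheses by your argument or any other (the paper's own proof of the claim ``$a\geq 0$'' has the same lacuna when $b=0$). What is true, and what suffices for every application of the lemma in the paper (all of which use only the ``resp.\ $\Gamma$'' version), is that $-t\bm{e_n}\notin\Gamma$ for all $t\geq 0$; this follows not from $\bm{e_n}\in\Gamma$ but from the elementary fact, recorded after \eqref{eqn-230331-0110}, that $\Gamma\cap(-\overline{\Gamma_n})=\emptyset$, since $-t\bm{e_n}$ has all entries $\leq 0$. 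Substituting that observation closes the strict case completely; for the non-strict case you should either restrict to $v'(x_0)\neq 0$ or note that an additional hypothesis on $\Gamma$ is needed.
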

\begin{proof}
When $\bl\notin\p\Gamma$, plugging $\varphi= e^{2^{-1}{(\mgn-1)}{v}}$ into formula \eqref{240304-1443}, then $\lambda(\av)=c(a+ b \mgn ,-b\dots,-b)$, where $a=-(\mgn-1)\varphi''\varphi^{-1}$, $b=(\varphi')^2 \varphi^{-2}\geq 0$ and $c=2(\mgn-1)^{-2}e^{-2v}>0$.
Since $\lambda(\av)\in\overline\Gamma$ (resp. $\Gamma$), the form of $\lambda(\av)$ implies $a\geq 0$ (resp.$>0$), and thus part (a) follows. Part (b) follows directly from formula \eqref{240304-1443}.
\end{proof}

The above lemma implies the following:         
For $n\geq 2$, let $\Gamma$ satisfy \eqref{eqn-230331-0110} with $\bl\notin\overline\Gamma$ and $\bm{e_n}\notin\p\Gamma$, i.e. $1<\mgn<\infty$. Then there exists no function $v\in C^2(\overline\hrn)$ on one variable $x_n$ satisfying $\lambda(\av)\in\Gamma$ on $\overline\hrn$ and $v'(0)\leq 0$.

Indeed, suppose the contrary that there is a such $v$. By Lemma \ref{xn-convexity} (a), $v''<-2^{-1}{(\mgn-1)}(v')^2$ in $[0,\infty)$.
Since $v''<0$, we have $v'(\epsi)<0$ for $\epsi>0$. Integrating the above differential inequality, we obtain that $v'(x_n)\leq 1/(2^{-1}{(\mgn-1)}(x_n-\epsi)+(v'(\epsi))^{-1})$ for any $x_n\geq \epsi$. This leads to $v(t)'\to -\infty$ as $t\to T-$, for some $T>0$. A contradiction.

\subsection{Nonexistence of entire solutions}
For $n\geq 2$, let $(f,\Gamma)$ satisfy \eqref{eqn-230331-0110} and \eqref{eqn-240223-0305} with $\bl\notin\overline\Gamma$. For constants $c\in\bR$ and $p\geq 0$, we study the question when equation \eqref{halfspace-equ-v} has no solution $v$ on one variable $x_n$. 

\begin{proposition}\label{affine-ode-blowup}
    Let $n$, $c$, $p$, and $(f,\Gamma)$ satisfy the assumptions in Theorem \ref{nondegenerateliouville}. Assume that $\bm{e_n}\notin\p\Gamma$, i.e. $\mgn<\infty$, $c>0$, and \eqref{nice-ode-condition} hold. If $p\in [0,\mgn+1]$,
    then equation \eqref{halfspace-equ-v} has no solution $v\in C^2(\overline\hrn)$ on one variable $x_n$.
\end{proposition}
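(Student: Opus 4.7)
The plan is to reduce the equation to a separable first-order ODE in the variable $v$, show that this ODE blows up at some finite value $v = K$, and combine this with the strict positivity of $v'$ (forced by the boundary condition and concavity of $e^{\beta v}$) to derive a contradiction.

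Suppose, for contradiction, that $v = v(x_n) \in C^2(\overline\hrn)$ solves \eqref{halfspace-equ-v}. Since $\bl \notin \overline\Gamma$ and $\bm{e_n} \notin \p\Gamma$, we have $1 < \mgn < \infty$; set $\beta \coloneqq (\mgn-1)/2 > 0$ and $\psi \coloneqq e^{\beta v}$. By Lemma \ref{xn-convexity}(a), $\psi'' < 0$ on $[0,\infty)$. The boundary condition together with $c > 0$ gives $\psi'(0) = \beta c e^{(\beta+1) v(0)} > 0$, and strict concavity combined with $\psi > 0$ forces $\psi' > 0$ throughout (else $\psi$ would decrease to $-\infty$). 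Hence $v' > 0$ on $[0,\infty)$, and integrating the inequality $v'' < -\beta (v')^2$ yields $v'(x_n) \to 0$ as $x_n \to \infty$.

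Next I would exploit the homogeneity of $f$. Setting $g(t) \coloneqq f(t, -1, \ldots, -1)$, condition \eqref{nice-ode-condition} combined with degree-one homogeneity yields $g(\mgn) = 0$, $g' > 0$ on $(\mgn, \infty)$, and the asymptotics $g(t) \sim Ct$ and $g'(t) \to C$ as $t \to \infty$, where $C \coloneqq f(1, 0, \ldots, 0) > 0$ (positive because $\bm{e_n} \in \Gamma$ and $f$ is positive on $\Gamma$, using permutation symmetry). With $\eta \coloneqq 1 - 2v''/(v')^2 > \mgn$, the equation becomes $(v')^2 g(\eta) = 2 e^{(2-p)v}$ and $v'' = (1 - \eta)(v')^2/2$. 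Differentiating the first in $x_n$ and substituting the second produces the separable ODE
\begin{equation*}
 g'(\eta)\, \frac{d\eta}{dv} = g(\eta)\bigl(\eta + 1 - p\bigr).
\end{equation*}
Since $p \leq \mgn + 1 < \eta + 1$, the right-hand side is strictly positive, so $\eta$ is strictly increasing in $v$; moreover, the asymptotic $g(\eta)(\eta+1-p)/g'(\eta) \sim \eta^2$ makes $\int^\infty g'(\eta)/[g(\eta)(\eta+1-p)]\, d\eta$ finite, producing some $K < \infty$ with $\eta(v) \to \infty$ as $v \to K^-$.

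Finally let $v_\infty \coloneqq \lim_{x_n \to \infty} v(x_n) \in (v(0), \infty]$. Since $v' > 0$ at every finite $x_n$ keeps $\eta$ finite there, the range of $v$ cannot contain $K$, so $v_\infty \leq K$. If $v_\infty < K$, then $\eta$ would have a finite limit $\eta(v_\infty)$, but $(v')^2 g(\eta) = 2 e^{(2-p) v}$ forces the LHS $\to 0$ while the RHS $\to 2 e^{(2-p) v_\infty} > 0$, a contradiction. Hence $v_\infty = K$ and $\eta(x_n) \to \infty$. Rewriting the equation as $-v'' = [(\eta-1)/g(\eta)]\, e^{(2-p)v}$ then gives $-v''(x_n) \to e^{(2-p)K}/C > 0$, and integrating over $[X, x_n]$ for large $X$ yields $v'(x_n) \to -\infty$, contradicting $v' > 0$. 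The main technical point will be justifying the blow-up rate $d\eta/dv \sim \eta^2$ at infinity and the resulting finiteness of $K$, both of which rest on the linear growth $g(t) \sim Ct$ forced by the degree-one homogeneity of $f$.
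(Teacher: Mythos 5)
Your overall strategy is sound and genuinely different from the paper's. The paper first shows $v'>0$, $v'\to 0$, and then splits into the two cases $\lim v<\infty$ and $\lim v=\infty$; in the second case it exploits the monotone quantity $e^{(\mgn-1)v/2}v'$ and a decomposition $\lambda(A[v])=e^{-(\mgn+1)v}(P+Q)$ with $Q$ converging to a point of $\p\Gamma$, invoking $\overline{f^{-1}(1)}\cap\p\Gamma=\emptyset$ to force $e^{(\mgn-1)v}(-v''-\tfrac{\mgn-1}{2}(v')^2)\geq\delta_0$, which integrates to a bound on $v$. Your route instead writes $\lambda(A[v])=\tfrac{(v')^2}{2}e^{-2v}(\eta,-1,\dots,-1)$ with $\eta=1-2v''/(v')^2>\mgn$, uses homogeneity to reduce the PDE to $(v')^2g(\eta)=2e^{(2-p)v}$ with $g(t)=f(t,-1,\dots,-1)$, and obtains boundedness of $v$ from the convergence of $\int^\infty g'(\eta)\,d\eta/[g(\eta)(\eta+1-p)]$. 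This is a clean and arguably more transparent mechanism for seeing where $p\leq\mgn+1$ enters, and (once the side claims are trimmed) it appears to use only the a.e.\ bounds $0<c\leq g'\leq M$ on $[\eta_0,\infty)$ coming from \eqref{eqn-240223-0305} and degree-zero homogeneity of $\nabla f$, rather than the full strength of $\overline{f^{-1}(1)}\cap\p\Gamma=\emptyset$.

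There is, however, a real gap in the central step: you differentiate $g(\eta(x_n))$ in $x_n$, which requires $g\in C^1$ and $\eta\in C^1$, i.e.\ $f\in C^1$ and $v\in C^3$. Neither is assumed: $f$ is only $C^{0,1}_{loc}$ with $\p_{\lambda_i}f\geq c(K)$ a.e., and $v$ is only $C^2$. The derivation of $g'(\eta)\,d\eta/dv=g(\eta)(\eta+1-p)$ must therefore be recast. This is repairable: the quantity $\Phi\coloneqq g(\eta)=2e^{(2-p)v}(v')^{-2}$ is $C^1$ in $x_n$ for $v\in C^2$ with $v'>0$; since $g$ is strictly increasing and locally bi-Lipschitz, $G(\eta)\coloneqq\int_{\eta_0}^{\eta}g'(s)\,ds/[g(s)(s+1-p)]$ equals $H(g(\eta))$ with $H(u)=\int_{g(\eta_0)}^{u}dw/[w(g^{-1}(w)+1-p)]$ of class $C^1$, and a direct computation of $\frac{d}{dx_n}H(\Phi)$ gives exactly $v'$, whence $G(\eta(x_n))=v(x_n)-v(0)\leq G(\infty)<\infty$. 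You should carry out this integrated version explicitly rather than differentiating the equation. Separately, three side claims are unjustified as written but inessential: (i) $g(\mgn)=0$ ($f$ need not extend to $\p\Gamma$, and you never use this); (ii) ``$f$ is positive on $\Gamma$'' is not a hypothesis — one only gets $f>0$ on $\Gamma_n$ from monotonicity plus homogeneity, so $C=f(1,0,\dots,0)\geq 0$; if $C=0$ then $(\eta-1)/g(\eta)\to\infty$ and your final contradiction only improves, so you should phrase the last step via the lower bound $g(\eta)\leq g(\eta_0)+M(\eta-\eta_0)$ rather than via $C>0$; (iii) $g'(t)\to C$ needs $\nabla f$ continuous at $(1,0,\dots,0)$, which a Lipschitz $f$ need not satisfy — again the a.e.\ two-sided bounds on $g'$ suffice for everything.
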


\begin{remark}\label{supple-remark}
    Let $n$, $c$, $p$, and $(f,\Gamma)$ satisfy the assumptions in Theorem \ref{nondegenerateliouville}. If either $\bm{e_n}\in\p\Gamma$ or $c\leq 0$ holds, then equation \eqref{halfspace-equ-v} has no solution $v\in C^2(\overline\hrn)$ on one variable $x_n$. Note here we do not assume \eqref{nice-ode-condition} nor restrictions on $p$. This follows easily from the discussions above and below Lemma \ref{xn-convexity}.
\end{remark}

\begin{remark}\label{remark-optimal-range}
  The range $p\in[0,\mgn+1]$ in Proposition \ref{affine-ode-blowup} is optimal:  For any $p>\mgn+1$, $c>0$, and any cone $\Gamma$ with $\bl\notin\overline\Gamma$ and $\bm{e_n}\notin\p\Gamma$, there exist a smooth function $f$ satisfying \eqref{eqn-240223-0305} and \eqref{nice-ode-condition}, and a smooth solution $v$ on one variable $x_n$ of \eqref{halfspace-equ-v}.
\end{remark}

Conditions \eqref{nice-ode-condition} and $p\in[0,\mgn+1]$ in Proposition \ref{affine-ode-blowup} can be weakened to more general ones. See the discussions at the end of this subsection. 
\begin{proof}[Proof of Proposition \ref{affine-ode-blowup}]
Suppose the contrary that there is a such $v$.
By the discussion below Lemma \ref{xn-convexity}, we must have
\begin{equation}\label{240304-1700}
    v'>0\quad \text{in}~[0,\infty).
\end{equation}

By Lemma \ref{xn-convexity} and $\bl\notin\overline\Gamma$, we have
\begin{equation}\label{240304-1714}
    (e^{2^{-1}{(\mgn-1)}v})''<0\quad \text{in}~[0,\infty),
\end{equation}
i.e. $-v''-2^{-1}(\mgn-1)(v')^2>0$ in $[0,\infty)$.  Integrating this, we obtain that $v'(x_n)\leq 1/(2^{-1}{(\mgn-1)} x_n+(v'(0))^{-1})$ for $x_n\geq 0$. In particular, 
\begin{equation}\label{240304-1719}
   v'(x_n)\to 0\quad \text{as}~~x_n\to\infty.
\end{equation}

By \eqref{240304-1700}, we have  $\lim_{x_n\to\infty}v(x_n)\in (-\infty,\infty]$. The remainder of the proof is divided into two cases. 

\textbf{Case 1:} $\lim_{x_n\to\infty} v(x_n)<\infty$. 

By \eqref{240304-1443} and \eqref{240304-1719}, we have
\begin{equation*}
    \lambda(\av)(x_n)= (-v''(x_n)e^{-2 v(x_n)}, 0,\dots,0)+o(1)\quad\text{as}~x_n\to \infty.
\end{equation*}
By \eqref{nice-ode-condition}, we have $0\notin \overline{f^{-1}[t_1,t_2]}$ for any $0<t_1<t_2<\infty$.
Combining the above and equation \eqref{halfspace-equ-v}, there exists some constant $\delta_0>0$ such that
\begin{equation*}
    -v''(x_n) e^{-2 v(x_n)}\geq \delta_0\quad \text{for}~x_n~ \text{large}.
\end{equation*}
By \eqref{240304-1700}, $-v''(x_n)\geq \delta_0 e^{2 v(0)}$ for $x_n$ large. Integrating this gives $v'(x_n)\leq b-\delta_0 e^{2 v(0)}x_n$ for some $b\in\bR$. A contradiction with \eqref{240304-1700} for $x_n$ large.

\textbf{Case 2:} $\lim_{x_n\to\infty} v(x_n)=\infty$. 

By \eqref{240304-1714}, $e^{2^{-1}{(\mgn-1)}v}v'$ is strictly decreasing $x_n$. This implies, by \eqref{240304-1700}, that 
\begin{equation}\label{240304-1913}
    s \coloneqq \lim_{x_n\to\infty} e^{2^{-1}{(\mgn-1)}v(x_n)}v'(x_n)\in [0,\infty).
\end{equation}
By \eqref{240304-1443} and \eqref{240304-1913}, we have
\begin{align}
&\quad \quad \lambda(\av)=  e^{-(\mgn+1) v}  (P+Q)\quad\text{with} \label{decompse-1}\\
P\coloneqq & e^{(\mgn-1)v} (-v''-2^{-1}{(\mgn-1)}(v')^2) \cdot(1,0,\dots,0)\quad\text{and} \notag \\
 &Q\coloneqq 2^{-1}{s^2}\cdot  (\mgn,-1,\dots,-1)+o(1)\quad \text{as}~x_n\to\infty. \notag
\end{align}
By equation \eqref{halfspace-equ-v}, the homogeneity of $f$ and $p\leq \mgn+1$, we have $f(\lambda(P+Q))=e^{(\mgn+1-p)v}\geq 1$ for $x_n$ large. On the other hand, $Q\to \p\Gamma$ as $x_n\to\infty$. Hence, by \eqref{eqn-230331-0110} and \eqref{nice-ode-condition}, there exists some $\delta_0>0$ such that
\begin{equation}\label{H2--inequ}
    e^{(\mgn-1)v}(-v''-2^{-1}{(\mgn-1)}(v')^2) \geq \delta_0\quad\text{for}~ x_n~\text{large}.
\end{equation}
Multiplying by $v'$, the above implies 
\begin{equation*}
    (e^{(\mgn-1)v}(v')^2 +2\delta_0 v)'\leq 0\quad \text{for}~x_n~\text{large}.
\end{equation*}
Integrating this gives $e^{(\mgn-1)v}(v')^2 +2\delta_0 v\leq a$ for some constant $a\in \bR$. Dropping the first term implies that $v(x_n)\leq (2\delta_0)^{-1}a$ for $x_n$ large. A contradiction.
Proposition \eqref{affine-ode-blowup} is proved.
\end{proof}

To end this subsection, we introduce the following more general conditions than conditions \eqref{nice-ode-condition} and $p\in[0,\mgn+1]$:
\begin{equation}\label{H1}
    \exists q\in [0,\mgn+1),~\sigma>0,~\text{s.t.}~\limsup_{t\to\infty} \big( e^{pt}\cdot \sup_{\lambda\in B_{\sigma}\cap \Gamma} f(e^{-qt}\lambda)\big)<1,\tag{H1}
\end{equation}
\begin{equation}\label{H2}
    \forall s\geq 0,~\exists \sigma_s>0,~\text{s.t.}~\limsup_{t\to\infty} \big( e^{pt}\cdot \sup_{\lambda\in B_{\sigma_s}(\lambda^s)\cap \Gamma} f(e^{-(\mgn+1)t}\lambda)\big)<1,\tag{H2}
\end{equation}
where $\lambda^{s}\coloneqq s(\mgn,-1,\dots,-1)\in\p\Gamma$.
When $f$ is homogeneous of degree $1$ and $0\notin\overline{f^{-1}(1)}$, condition \eqref{H1} is equivalent to $p\in [0,\mgn+1)$. When $f$ is homogeneous of degree $1$ and $\lambda_s\notin \overline{f^{-1}(1)}$ for any $s\geq 0$, condition \eqref{H2} is equivalent to $p\in [0,\mgn+1]$. In particular, if $f$ satisfies \eqref{nice-ode-condition}, the condition ``either \eqref{H1} or \eqref{H2} holds" is equivalent to $p\in[0,\mgn+1]$.

Let $n$, $c$, $p$, and $(f,\Gamma)$ satisfy the assumptions in Theorem \ref{nondegenerateliouville}. Assume that $\bm{e_n}\notin\p\Gamma$, $c>0$, and that either \eqref{H1} or \eqref{H2} holds. Then the conclusion of Proposition \ref{affine-ode-blowup} holds. This can be proved in the same spirit as the proof of Proposition \ref{affine-ode-blowup}. We sketch its proof as below.

We argue by contradiction as the proof of Proposition \ref{affine-ode-blowup}. Same as before, we can prove \eqref{240304-1700}--\eqref{240304-1719}, and $\lim_{x_n\to\infty}v(x_n)\in (-\infty,\infty]$. The remainder of the proof is divided into two cases as before. Case 1: $\lim_{x_n\to\infty} v(x_n)<\infty$. We can reach a contradiction by the same arguments as before since both conditions \eqref{H1} and \eqref{H2} imply that $0\notin\overline{f^{-1}(1)}$ if $p=0$, and $0\notin\overline{f^{-1}(t)}$ for any $t>0$ if $p>0$. Case 2: As before, \eqref{240304-1913} holds. When condition \eqref{H2} holds, by decomposing $\lambda(\av)$ as the form of \eqref{decompse-1}, we can still derive \eqref{H2--inequ} which leads to a contradiction. When condition \eqref{H1} holds,  rewriting formula \eqref{240304-1443} as below:
\begin{equation*}
    \lambda(\av)= e^{-q v} \{(-v'' e^{(q-2)v},0,\dots,0) + 2^{-1}(e^{2^{-1}{(q-2)}v}v')^2 (1,-1,\dots,-1) \}.
\end{equation*}
By \eqref{240304-1913} and $q<\mgn+1$, we have $\lim_{x_n\to\infty} e^{2^{-1}{(q-2)}v}v'=0$. Therefore,
\begin{equation*}
    \lambda(\av)= e^{-q v} \{(-v'' e^{(q-2)v},0,\dots,0) + o(1) \}\quad\text{as}~x_n\to\infty.
\end{equation*}
By equation \eqref{halfspace-equ-v} and condition \eqref{H1}, there exists some $\delta_0>0$ such that
\begin{equation}\label{240304-2108}
    -v''(x_n) e^{(q-2)v(x_n)}\geq \delta_0\quad \text{for}~x_n~\text{large}.
\end{equation}
When $q=2$, integrating the above implies $v'(x_n)\leq -\delta_0 x_n + a$ for some $a\in\bR$. A contradiction with \eqref{240304-1700} for $x_n$ large. When $q\neq 2$, multiplying both sides of \eqref{240304-2108} by $v'$ leads to $[2^{-1}(v')^2 +{\delta_0}{(2-q)^{-1}}e^{(2-q)v}]'\leq 0$ for $x_n$ large. Integrating this gives $(v'(x_n))^2\leq b-{2\delta_0}{(2-q)^{-1}} e^{(2-q)v(x_n)}$ for some $b\in\bR$. A contradiction for $x_n$ large.

\subsection{Counterexamples in Remark \ref{remark-optimal-range}}

Recall \eqref{240304-1443}, when $v=v(x_n)$, $\lambda(A[v])$ takes the form $(\lambda_1, \lambda_2, \ldots, \lambda_2)$. We solve for $v$ such that $\lambda_1 + \mgn \lambda_2 = e^{-pv}$, which is equivalent to
\begin{equation} \label{eqn-240918-1018}
    v'' + \frac{\mgn - 1}{2} (v')^2 + e^{(2-p)v} = 0
\end{equation}

\begin{lemma}\label{lem-240919-0615}
For $v_0,w_0\in\bR$, $\mgn > 1$, and $p > \mgn + 1$, there exists a unique smooth solution $v$ of \eqref{eqn-240918-1018} with $v(0) = v_0$, $v'(0) = w_0$ in $[0,\infty)$ if and only if $w_0 \geq \sqrt{2} (p-\mgn-1)^{-1/2} e^{-(p-2)v_0/2}$.
\end{lemma}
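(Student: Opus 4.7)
The plan is a phase-plane reduction. Denote $\beta\coloneqq p-\mgn-1>0$ and $q\coloneqq 2-p<0$, so \eqref{eqn-240918-1018} reads $v''=-\tfrac{\mgn-1}{2}(v')^2-e^{qv}$. Local existence and uniqueness of a smooth solution on a maximal right-interval from the initial data $(v_0,w_0)$ is standard Picard--Lindel\"of; the task is to decide when this maximal interval is all of $[0,\infty)$. The strategy has three stages: (i) observe that $v''<0$, so $v'$ is strictly decreasing; (ii) show via a Riccati-type blowup that global existence forces $v'(x_n)>0$ for every $x_n\ge 0$; and (iii) convert the second-order ODE, with $v$ as the independent variable, into a linear first-order ODE for $W(v)\coloneqq (v')^2$, whose explicit solution pinpoints the sharp threshold.

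For stage (ii), suppose the maximal solution satisfies $v'(x_*)\le 0$ for some $x_*$ in its domain. If $v'(x_*)=0$, then $v''(x_*)=-e^{qv(x_*)}<0$, so $v'$ strictly decreases through zero; hence we may assume $v'<0$ on some interval to the right of $x_*$. Setting $w\coloneqq -v'>0$ there, the ODE gives $w'=-v''=\tfrac{\mgn-1}{2}w^2+e^{qv}\ge \tfrac{\mgn-1}{2}w^2$. Thus $(1/w)'\le -\tfrac{\mgn-1}{2}$ and $1/w$ reaches zero in finite $x_n$, so $w\to+\infty$ and the maximal interval is bounded above. Consequently global existence on $[0,\infty)$ requires $w_0>0$ and $v'(x_n)>0$ on the entire interval.

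For stage (iii), assuming $v'>0$, $v$ is strictly increasing, so $W(v)\coloneqq (v'(x_n(v)))^2$ is a well-defined $C^1$ function. Differentiation and substitution give $\tfrac{dW}{dv}=2v''=-(\mgn-1)W-2e^{qv}$ with $W(v_0)=w_0^2$. Solving with integrating factor $e^{(\mgn-1)v}$ and using $\mgn-1+q=-\beta$ yields
\[
W(v)=\Big(w_0^2-\tfrac{2}{\beta}e^{qv_0}\Big)e^{-(\mgn-1)(v-v_0)}+\tfrac{2}{\beta}e^{qv}.
\]
If $w_0^2\ge \tfrac{2}{\beta}e^{qv_0}$, then $W(v)\ge \tfrac{2}{\beta}e^{qv}>0$ for all $v\ge v_0$; since by stage (i) one has $v'\in(0,w_0]$, the bound $v(x_n)\le v_0+w_0x_n$ forbids finite-time blowup and the solution extends to all of $[0,\infty)$. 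If instead $w_0^2<\tfrac{2}{\beta}e^{qv_0}$ (which automatically covers $w_0\le 0$), the first summand is negative and, because $\beta>0$, decays slower than the second; hence $W$ vanishes at a unique finite $v_*>v_0$. Since $W(v)\sim 2e^{qv_*}(v_*-v)$ near $v_*$, the integral $\int_{v_0}^{v_*}dv/\sqrt{W(v)}$ converges, so $v'$ hits zero at a finite $x_n$ and stage (ii) produces blowup. Rewriting $w_0^2\ge \tfrac{2}{\beta}e^{qv_0}$ as $w_0\ge \sqrt{2}(p-\mgn-1)^{-1/2}e^{-(p-2)v_0/2}$ gives the claim. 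The main obstacle is the Riccati blowup of stage (ii), and in particular the bookkeeping to ensure $v'$ strictly crosses zero so that the comparison $w'\ge \tfrac{\mgn-1}{2}w^2$ applies on a nontrivial interval; the subsequent phase-plane integration is routine once the sign of $v'$ is locked in.
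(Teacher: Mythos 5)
Your proof is correct and takes essentially the same route as the paper: your conserved quantity $e^{(\mgn-1)v}W(v)-\tfrac{2}{\beta}e^{-\beta v}$ is precisely the paper's first integral $I_{\theta,q}(\varphi,w)=\varphi^{2\theta}w^2-\tfrac{1}{q-\theta}\varphi^{-2(q-\theta)}$ with $\varphi=e^v$, the sign of this constant gives the same threshold, and your Riccati comparison for the $v'\le 0$ case merely replaces the paper's explicit computation of $T_+$ as a convergent integral. The only blemish is the parenthetical assertion that $w_0^2<\tfrac{2}{\beta}e^{qv_0}$ ``automatically covers'' $w_0\le 0$ (it does not), but this is harmless since your stage (ii) already disposes of $w_0\le 0$.
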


Lemma \ref{lem-240919-0615} implies that, when $\mgn > 1, p > \mgn + 1$, and $c>0$, we can solve \eqref{eqn-240918-1018} with initial conditions $v(0) = v_0$ and $v'(0) = ce^{v_0}$ with $v_0 \geq -p^{-1}(2\log c + \log (p - \mgn - 1) - \log 2)$ for a solution $v=v(x_n)$, $x_n \in [0, \infty)$. Such $v$ satisfies \eqref{halfspace-equ-v} with an $f$ constructed similarly to that in \cite[Example~5.1]{CLL-1}.
While the overall construction mirrors the example, specific adaptations are necessary for different $\mgn$ values. For $\mgn \in (1, n-1)$, the construction remains unchanged; for $\mgn = n-1$, we take $f = \sigma_1$; and for $\mgn > n-1$, we begin with $f^{(0)} (\lambda) := \min_k \{ \lambda_k + \mgn (n-1)^{-1} \sum_{j \neq k} \lambda_j \}$, with subsequent steps following the established procedure.

\begin{proof}[Proof of Lemma \ref{lem-240919-0615}]
    For convenience, denote $\theta = (\mgn -1)/2 (>0)$ and $q = (p-2)/2 (> \theta)$. From $\mgn > 1$ and $p > \mgn + 1$, there holds $q > \theta > 0$. 
    Let $(T_{-}, T_+)$ be the maximal existence interval of solution $v$ of \eqref{eqn-240918-1018} with $v(0) = v_0$ and $v'(0) = w_0$. Below, we only discuss $T_+$. 
    
    Let $\varphi = e^v$ and $w = v'$. Then $\varphi > 0$ and $w$ satisfy
    \begin{equation} \label{eqn-240919-1215}
        \begin{cases}
            \varphi' = \varphi w,\\
            w' = -\theta w^2 - \varphi^{-2q},\\
            \varphi(0) = \varphi_0, \,\, w(0) = w_0.
        \end{cases}
    \end{equation}
    where $\varphi_0 = e^{v_0} > 0$. 
    Noting that $q > \theta$, a calculation shows that \eqref{eqn-240919-1215} has a first integral
    \begin{equation} \label{eqn-240919-0535}
        I_{\theta, q} (\varphi, w) = \varphi^{2\theta}w^2 - \frac{1}{q - \theta}\varphi^{-2(q - \theta)},
    \end{equation}
    i.e.
    \begin{equation} \label{eqn-240919-0312}
        I_{\theta,q} (\varphi, w) = I_{\theta,q} (\varphi_0, w_0) (:= I_0) \quad \text{on}\,\,(T_-, T_+).
    \end{equation}
    Thus we have, on $(T_-, T_+)$, that
    \begin{equation} \label{eqn-240919-0323}
        0 \leq w^2 
        = 
        F(\varphi) 
        \equiv 
        F_{\theta,q,\varphi_0, w_0} (\varphi)
        :=
        I_0 \varphi^{-2\theta} + \frac{1}{q - \theta} \varphi^{- 2q}.
    \end{equation}

    \textbf{Case 1}: $w_0 \leq 0$. 

    In this case, we prove $T_+ < \infty$. From $w' < 0$, we have $w<0$ on $(0, T_+)$. Hence, from $\varphi ' = \varphi w$ and \eqref{eqn-240919-0323},
    \begin{equation*}
        \varphi ' = - \varphi \sqrt{F(\varphi)} < 0 \,\, on\,\, (0, T_+).
    \end{equation*}
    Let $a = \lim_{x_n \rightarrow (T_+)-} \varphi(x_n) (\in [0, \varphi_0))$. Integrating the above, we obtain
    \begin{equation*}
        T_+  = \int_a^{\varphi_0} \frac{ds}{s \sqrt{F(s)}} = \int_a^{\varphi_0} \frac{ds}{s^{1-q} \sqrt{I_0 s^{2(q-\theta)} + \frac{1}{q-\theta}}}
    \end{equation*}
    Since $[a,\varphi_0]$ is a finite interval and all potential zeros of $s^{1-q} \sqrt{I_0 s^{2(q-\theta)} + \frac{1}{q-\theta}}$ are at most of order $1-q (< 1)$ or $1/2$, there must hold $T_+ < \infty$.

    \textbf{Case 2}: $w_0 > 0$.

    We prove $T_+ < \infty$ when $I_0 <0$ and $T_+ = \infty$ when $I_0 \geq 0$. 

    When $I_0 \geq 0$, from \eqref{eqn-240919-0535} and \eqref{eqn-240919-0312}, we have $w > 0$ in $(T_-, T_+)$. Hence, from $\varphi ' = \varphi w$ and \eqref{eqn-240919-0323},
    \begin{equation*} 
        \varphi ' = \varphi \sqrt{F(\varphi)} > 0 \,\, \text{on}\,\, (0, T_+).
    \end{equation*}
    Let $a = \lim_{x_n \rightarrow (T_+)-} \varphi(x_n) (\in (\varphi_0, \infty])$. Integrating the above, we obtain
    \begin{equation*}
        T_+ = \int_{\varphi_0}^a \frac{ds}{s \sqrt{F(s)}} = \int_{\varphi_0}^a \frac{ds}{s^{1-\theta} \sqrt{I_0 + \frac{1}{q-\theta} s^{-2 (q-\theta)}}}.
    \end{equation*}
    There must hold $T_+ = \infty$ since otherwise one can continue the solution by solving the ODE system in \eqref{eqn-240919-1215} with initial conditions $\varphi(T_+) = a$ and $w(T_+) = \sqrt{F(a)}$, contradicting with the assumption that $(T_-, T_+)$ is the maximal existence interval.
    
    When $I_0 < 0$, from Case 1, it suffices to show $w(x_n) \leq 0$ for some $x_n \in (0, T_+)$. Suppose the contrary, $w > 0$ in $(0, T_+)$. Since from \eqref{eqn-240919-1215}, $w' <0$, we have $0 < w < w_0$ in $(0, T_+)$. From $\varphi' = \varphi w > 0$, $a:= \lim_{x_n \rightarrow (T_+)-} \varphi(x_n) \in (\varphi_0 ,\infty]$. From \eqref{eqn-240919-0535}, \eqref{eqn-240919-0312}, and $I_0 < 0$, there must hold $a < \infty$. These imply that $\sup_{(0,T_+)} (|v| + |v'|) < \infty$, and thus $T_+ = \infty$. However, back to the $w$-equation of \eqref{eqn-240919-1215}, we have $w' \leq - \varphi^{-2q} \leq -2a^{-2q}$. It is impossible that $w> 0$ on $(0, T_+) = (0, \infty)$. A contradiction.

    Noting that the inequality $w_0 \geq \sqrt{2} (p-\mgn-1)^{-1/2} e^{-(p-2)v_0/2}$ can be written as $w_0 > 0$ and $I_0 \geq 0$, the lemma is proved.
\end{proof}

\subsection{Counterexamples in Remark \ref{nondegoptimalremark}}\label{counterdetail}
 
\begin{example} \label{exp-230622-1021}
    Let $n\geq 2$, for any $c\in\bR$ and any $s \in (0,1]$, there exist a cone $\Gamma$ satisfying \eqref{eqn-230331-0110} with $\mgn = s$, and a symmetric homogeneous of degree $1$ function $f \in C^0(\overline{\Gamma}) \cap C^\infty(\Gamma)$ satisfying $f|_{\p \Gamma} = 0$ and $C^{-1} \leq \p_{\lambda_i} f\leq C$ for all $i$ in $\Gamma$, and a function $v \in C^\infty (\overline\hrn)$ satisfying \eqref{halfspace-equ-v-critical}, but $v$ is not of the form \eqref{half-space-bubble}.
\end{example}

 For any $s\in (0,1] $, $v_0\in\bR$ and $c\in\bR$, by \cite[Lemma 5.1]{CLL-1}, there exists a unique smooth function $v=v(x_n)$ satisfying
\begin{equation*}
\lambda_1+s\lambda_2=1~ \text{on}~ [0,\infty)~ \text{with} ~v(0)=v_0~ \text{and} ~v'(0)=ce^{v_0},
\end{equation*}
where $\lambda_1$ and $\lambda_2$ are those in \eqref{240304-1443}.
In particular, ${\p_{x_n} v}=c\cdot e^v$ on $\p\hrn$.

By the same constructions as that of \cite[Section 5.2]{CLL-1}, we can find desired $(f,\Gamma)$'s in Example \ref{exp-230622-1021} matching the above chosen solution $v$, and also desired $f$'s in Remark \ref{nondegoptimalremark} matching the above $v$. 

\appendix
\section{Equivalent theorems in terms of Ricci tensor}\label{riccisection}
For reader's convenience, we reformulate Theorem \ref{nondegenerateliouville-critical} and \ref{nondegenerateliouville} in terms of Ricci tensor. Ricci tensor and Schouten tensor are related by a linear transformation $T$ as explained below.

Recall that on a Riemannian manifold $(M^n,g)$ of dimension $n\geq 3$, we have 
\begin{equation*}
    Ric_g=(n-2)A_g + (2(n-1))^{-1}{R_g}\cdot g.
\end{equation*}
Clearly, $\lambda(Ric_g)\equiv T\lambda(A_g)$, where $T=(n-2)I+\bm{e}\otimes\bm{e}$. As $(f,\Gamma)$ denotes pairs for eigenvalues of Schouten tensor, we use $(\hat{f},\hat{\Gamma})$ to denote pairs for eigenvalues of Ricci tensor. Let
\begin{equation*}
    f(\lambda)\coloneqq \hat{f}(T\lambda),\quad \lambda\in\Gamma\coloneqq T^{-1}\hat{\Gamma}.
\end{equation*}
Then we have $f(\lambda(A_g))\equiv \hat{f}(\lambda(Ric_g))$, and $\lambda(A_g)\in\Gamma$ if and only if $\lambda(Ric_g)\in\hat\Gamma$.

\medskip

Let $(\hat{f},\hat\Gamma)$ satisfy the following conditions:
 \begin{equation} \label{eqn-230331-0110-hat}
   \begin{cases}
    \hat\Gamma \subsetneqq \bR^n \,\, \text{is a non-empty open symmetric cone with vertex at the origin},\\
    \hat\Gamma+T\Gamma_n\subset\hat\Gamma,
    \end{cases}
\end{equation}
\begin{equation} \label{eqn-240223-0305-ricci}
\begin{cases}
\hat{f} \in C^{0,1}_{loc}(\hat\Gamma)\,\,\text{is a symmetric function satisfying} ~~
    T(\nabla \hat{f})\in c(K)\bm{e}+\Gamma_n\\
    \text{a.e. $K$, $c(K)>0$, for any compact subset} ~K~\text{of}~\hat\Gamma,
\end{cases}
\end{equation}
and
\begin{equation}\label{natural-ass-ricci}
    0\notin \overline{\hat{f}^{-1}(1)}.
\end{equation}
It is easy to check that $T\Gamma_n$ is the interior of the minimal convex symmetric cone with vertex at the origin containing $(n-1,1,\dots,1)$. 
Note that condition \eqref{eqn-240223-0305-ricci} allows ${\p_{\lambda_i} \hat{f}}<0$ for some $i$.

For constant $c\in\bR$, consider the equation
\begin{equation}\label{nequation-ricci-re-hat}
\begin{cases}
    \hat{f}(\lambda(Ric_{\bar{g}_u}))= 1\quad \text{on}~\bR^n,\\
    h_{\bar{g}_u}=c\quad \text{on}~\p\hrn,    \end{cases}
\end{equation}
where
$\bar{g}_u= u^{\frac{4}{n-2}}\bar{g}$, $\bar{g}=|dx|^2$ is the flat metric, $u$ is a positive function on $\bR^n$, 
$\lambda(Ric_{g})$ denotes eigenvalues of $Ric_{g}$ with respect to $g$, and $h_{\bar{g}_u}$ is the mean curvature.

Denote
\begin{equation*}
    \hat{\lambda}^*\coloneqq (0,-1,\dots,-1).
\end{equation*}
The following rigidity theorem is equivalent to Theorem \ref{nondegenerateliouville-critical}.

\begin{theorem}
    For $n\geq 3$ and $c\in\bR$, let $(\hat{f},\hat\Gamma)$ satisfy \eqref{eqn-230331-0110-hat}--\eqref{natural-ass-ricci}, and $\hat{\lambda}^* \notin \overline{\hat\Gamma}$. Assume that $u\in C^2(\overline\hrn)$ is a positive solution of \eqref{nequation-ricci-re-hat}. Then $u\equiv e^{\frac{n-2}{2}v}$, where $v$ is of the form \eqref{half-space-bubble} with $a,b>0$ and $\bar x=(\bar x', \bar x_n)\in\bR^n$ satisfy $\hat{f}(4(n-1)a^{-2}b\bm{e})=1$ and $2 a^{-1}b \bar x_n=c$ with $\bm{e}=(1,1,\dots,1)$.
\end{theorem}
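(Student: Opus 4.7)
\bigskip

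\noindent\textbf{Proof proposal.} The plan is to reduce the Ricci-tensor formulation to the Schouten-tensor formulation already handled by Theorem \ref{nondegenerateliouville-critical}, using the linear correspondence $T = (n-2)I + \bm{e}\otimes \bm{e}$ indicated in the appendix. First I set
\begin{equation*}
    v \coloneqq \tfrac{2}{n-2}\log u,
\end{equation*}
so that $\bar g_u = u^{4/(n-2)} \bar g = e^{2v} \bar g = \bar g_v$. Using the formulas recorded in the introduction, I have $A_{\bar g_v} = e^{2v} A[v]_{ij}\,dx^i dx^j$ and hence $\lambda(A_{\bar g_v}) = \lambda(A[v])$, so
\begin{equation*}
    \lambda(Ric_{\bar g_u}) = T\lambda(A_{\bar g_v}) = T\lambda(A[v]).
\end{equation*}
Defining $f(\lambda) \coloneqq \hat f(T\lambda)$ on $\Gamma \coloneqq T^{-1}\hat\Gamma$, the interior equation $\hat f(\lambda(Ric_{\bar g_u})) = 1$ becomes $f(\lambda(A[v])) = 1$ in $\overline{\hrn}$. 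The boundary condition $h_{\bar g_u} = c$, combined with $h_{\bar g_v} = -e^{-v}\partial_{x_n} v$, gives $\partial_{x_n} v = -c\cdot e^v$ on $\p\hrn$. Thus $v$ satisfies \eqref{halfspace-equ-v-critical} with the constant on the boundary replaced by $-c$.

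Next I verify that $(f,\Gamma)$ satisfies \eqref{eqn-230331-0110}--\eqref{natural-assumption} with $\bl\notin\overline\Gamma$. Since $T$ commutes with coordinate permutations, is invertible (its eigenvalues $n-2$ and $2n-2$ are positive for $n\geq 3$), and is linear, $\Gamma$ is a non-empty open symmetric cone. The inclusion $\Gamma+\Gamma_n\subset\Gamma$ is equivalent to $\hat\Gamma+T\Gamma_n\subset\hat\Gamma$, which is \eqref{eqn-230331-0110-hat}. The chain rule gives $\nabla f = T(\nabla \hat f)$, and condition \eqref{eqn-240223-0305-ricci} says precisely that each component of $T(\nabla \hat f)$ is bounded below by a positive constant on compact subsets, establishing \eqref{eqn-240223-0305}. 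Since $T$ is a homeomorphism sending $0$ to $0$, $f^{-1}(1) = T^{-1}\hat f^{-1}(1)$ and \eqref{natural-ass-ricci} translates to \eqref{natural-assumption}. Finally, a direct computation gives
\begin{equation*}
    T\bl = (n-2)(1,-1,\ldots,-1) + \bm{e}(\bm{e}\cdot\bl) = 2(n-2)\,\hat{\lambda}^*,
\end{equation*}
so by the cone property of $\hat\Gamma$, $\hat{\lambda}^*\notin\overline{\hat\Gamma}$ is equivalent to $\bl\notin\overline{\Gamma}$.

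Having matched all hypotheses, I apply Theorem \ref{nondegenerateliouville-critical} to $v$ with boundary constant $-c$ to conclude that
\begin{equation*}
    v(x)\equiv \log\!\left(\tfrac{a}{1+b|x-\bar x|^2}\right),
\end{equation*}
for some $a,b>0$ and $\bar x\in\bR^n$ with $f(2a^{-2}b\bm{e}) = 1$ and $2a^{-1}b\bar x_n = -c$. Unwinding the substitution yields $u = e^{(n-2)v/2}$, and the first normalization becomes
\begin{equation*}
    1 = f(2a^{-2}b\bm{e}) = \hat f(2a^{-2}b\, T\bm{e}) = \hat f\big(4(n-1)a^{-2}b\,\bm{e}\big),
\end{equation*}
since $T\bm{e} = (2n-2)\bm{e}$. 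This gives the stated form, where the sign of $c$ in the relation $2a^{-1}b\bar x_n$ is determined by the sign convention used in $h_{\bar g_v} = -e^{-v}\p_{x_n} v$.

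I do not anticipate any substantive obstacle: the proof is a translation, and the only points demanding care are the bookkeeping of the conditions under the map $T$ (in particular that \eqref{eqn-240223-0305-ricci} is precisely what is needed to recover the pointwise ellipticity \eqref{eqn-240223-0305} after conjugation) and the sign that arises from the mean-curvature formula $h_{\bar g_v} = -e^{-v}\p_{x_n} v$, which flips $c$ to $-c$ when passing between the two formulations.
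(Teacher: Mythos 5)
Your proposal is correct and is exactly the route the paper intends: Appendix \ref{riccisection} gives no separate proof but simply declares the theorem equivalent to Theorem \ref{nondegenerateliouville-critical} via the linear map $T=(n-2)I+\bm{e}\otimes\bm{e}$, and you have carried out that translation with all the hypothesis-checking done correctly (in particular $T\bl=2(n-2)\hat\lambda^*$ and $T\bm{e}=(2n-2)\bm{e}$). One point you should state outright rather than waving at ``the sign convention'': since $h_{\bar g_v}=-e^{-v}\p_{x_n}v$, the boundary condition becomes $\p_{x_n}v=-c\,e^v$ and Theorem \ref{nondegenerateliouville-critical} yields $2a^{-1}b\bar x_n=-c$, not $+c$ as in the statement as written; this agrees with the relations recorded in the Examples of \S 1.2 (all of which have $2a^{-1}b\bar x_n=-c$), so the appendix statement appears to carry a sign typo and your derivation is the correct one.
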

The above result fails when $\hat{\lambda}^*\in\overline{\hat\Gamma}$; see counterexamples in Remark \ref{nondegoptimalremark}.

\smallskip

For constants $c\in\bR$ and $q\geq 0$, consider the following more general equation, including \eqref{nequation-ricci-re-hat} and its subcritical cases, 
\begin{equation}\label{halfspace-equ-v-hat}
       \begin{cases}
        \hat{f}(\lambda(Ric_{\bar{g}_u}))=u^{-q}\quad \text{in}~\overline\hrn,\\
        h_{\bar{g}_u}=c\quad \text{on}~\p \hrn.
        \end{cases}
\end{equation}
Theorem \ref{nondegenerateliouville} is reformulated as below.
\begin{theorem}
    For $n\geq 3$, $q\geq 0$ and $c\in\bR$, let $(\hat{f},\hat\Gamma)$ satisfy \eqref{eqn-230331-0110-hat} and \eqref{eqn-240223-0305-ricci} with $\hat{\lambda}^*\notin \overline{\hat\Gamma}$. Assume that $u\in C^2(\overline\hrn)$ is a positive solution of \eqref{halfspace-equ-v-hat}. Then one of the following holds:
\begin{enumerate}[label=(\roman*)]
    \item It holds that $q=0$ and $u\equiv e^{\frac{n-2}{2}v}$, where $v$ is of the form \eqref{half-space-bubble},
    where $a,b>0$ and $\bar x=(\bar x', \bar x_n)\in\bR^n$ satisfy $\hat{f}(4(n-1)a^{-2}b\bm{e})=1$ and $2 a^{-1}b \bar x_n=c$ with $\bm{e}=(1,1,\dots,1)$.
     \item Solution $u$ depends only on $x_n$, i.e. $u(x',x_n)\equiv u(0',x_n)$.
      \end{enumerate}
\end{theorem}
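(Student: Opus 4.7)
The plan is to reduce the final theorem to Theorem \ref{nondegenerateliouville} via the linear change of variable between Ricci and Schouten tensors already set up at the start of Appendix \ref{riccisection}. First I would set $v := \frac{2}{n-2}\log u$, so that $\bar g_u = u^{4/(n-2)}\bar g = e^{2v}\bar g = \bar g_v$. Using $Ric_g = (n-2)A_g + \frac{R_g}{2(n-1)}g$, this gives $\lambda(Ric_{\bar g_v}) = T\lambda(\av)$ with $T = (n-2)I + \bm{e}\otimes \bm{e}$. Define $f(\lambda) := \hat f(T\lambda)$, $\Gamma := T^{-1}\hat\Gamma$, and $p := \frac{n-2}{2}q$. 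Recalling from the excerpt that $h_{\bar g_v} = -e^{-v}\p v/\p x_n$ on $\p\hrn$, one checks that \eqref{halfspace-equ-v-hat} translates exactly into \eqref{halfspace-equ-v} for $v$ with constant $-c$ in place of $c$.

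Next I would verify that $(f,\Gamma)$ satisfies the hypotheses of Theorem \ref{nondegenerateliouville}. Since $T$ commutes with coordinate permutations, $\Gamma$ is an open symmetric cone; moreover $\Gamma + \Gamma_n = T^{-1}(\hat\Gamma + T\Gamma_n) \subset T^{-1}\hat\Gamma = \Gamma$, giving \eqref{eqn-230331-0110}. Since $T$ is symmetric, the chain rule gives $\nabla f(\lambda) = T\nabla\hat f(T\lambda)$, so \eqref{eqn-240223-0305-ricci} on $\hat f$ is precisely \eqref{eqn-240223-0305} on $f$; the nonstandard-looking condition on $\hat f$ (which may allow $\p_{\lambda_i}\hat f < 0$) is exactly what is needed so that after the rotation by $T$ the derivatives $\p_{\lambda_i}f$ become uniformly positive. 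A direct computation yields $T\bl = 2(n-2)\hat\lambda^*$, hence $\bl \notin \overline\Gamma$ is equivalent to $\hat\lambda^* \notin \overline{\hat\Gamma}$.

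Applying Theorem \ref{nondegenerateliouville} to $v$ with parameters $(p,-c)$ then gives two alternatives, each of which translates back. In Alternative (i), $p = 0$ forces $q = 0$ (as $n\geq 3$), and $v$ has the bubble form \eqref{half-space-bubble}, so $u \equiv e^{(n-2)v/2}$; using $T\bm{e} = 2(n-1)\bm{e}$, the condition $f(2a^{-2}b\bm{e}) = 1$ reads $\hat f(4(n-1)a^{-2}b\bm{e}) = 1$, and the boundary-parameter relation translates to $2a^{-1}b\bar x_n = -c$ (i.e.\ the claimed identity up to the sign convention introduced by $h_{\bar g_v}$). In Alternative (ii), $v(x',x_n) \equiv v(0',x_n)$ gives $u(x',x_n) \equiv u(0',x_n)$ directly.

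The main difficulty here is not analytic but purely bookkeeping: tracking the sign from $h_{\bar g_v} = -e^{-v}\p_{x_n}v$, the action of the linear map $T$ on the distinguished vectors $\bm{e}$, $\bl$, and on the cones and gradients, and verifying that the parameter relations in Alternative (i) survive the change of variable. All of the genuine analytic content — the method of moving spheres, the isolated-boundary-singularity estimates of Propositions \ref{ms_starter}, \ref{notouching-thm-av-critical}, and \ref{notouching-thm-av-subcritical}, and the radial classification of Lemma \ref{diskradial} — has already been carried out for the Schouten-side theorem in \S\ref{proof-1.2} and is applied here as a black box.
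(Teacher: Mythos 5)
Your reduction is correct and is exactly the route the paper takes: Appendix \ref{riccisection} states the theorem as an equivalent reformulation of Theorem \ref{nondegenerateliouville} via $T=(n-2)I+\bm{e}\otimes\bm{e}$, and your verifications ($T\Gamma_n$-monotonicity passing to \eqref{eqn-240223-0305}, $T\bl=2(n-2)\hat\lambda^*$, $T\bm{e}=2(n-1)\bm{e}$, $p=\tfrac{n-2}{2}q$) fill in precisely the bookkeeping the paper leaves implicit. Your sign observation is also right: since $h_{\bar g_v}=-e^{-v}\p_{x_n}v$, the correct relation is $2a^{-1}b\bar x_n=-c$, consistent with the Examples in \S 1.2, so the ``$=c$'' in the appendix statement appears to be a typo rather than a gap in your argument.
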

See the discussions below Theorem \ref{nondegenerateliouville} about solutions on one variable $x_n$.

\section*{Acknowledgement}
B.Z. Chu and Y.Y. Li are partially supported by NSF Grant DMS-2247410.

\end{document}